\crefname{subsection}{subsection}{subsections}
\numberwithin{equation}{section}
\newcommand{\R}{\mathbb{R}}
\newcommand{\M}{\mathbb{M}}
\newcommand{\N}{\mathbb{N}}
\renewcommand{\d}[1]{\,\mathrm{d}#1}
\renewcommand{\div}{\mathrm{div}}
\newcommand{\Acal}{\mathcal{A}}
\newcommand{\Ical}{\mathrm{I}}
\newcommand{\Jcal}{\mathcal{J}}
\newcommand{\D}{\mathrm{D}}
\newcommand{\Fcal}{\mathcal{F}}
\newcommand{\Mcal}{\mathcal{M}}
\newcommand{\Tcal}{\mathcal{T}}
\newcommand{\PotRec}{\mathcal{R}}
\newcommand{\GrRec}{\mathcal{G}}
\newcommand{\RT}{\mathrm{RT}}
\newcommand{\CR}{\mathrm{CR}}
\newcommand{\pw}{\mathrm{pw}}
\newcommand{\osc}{\mathrm{osc}}
\newcommand{\sfrak}{\mathrm{s}}
\newcommand{\Scal}{\mathcal{S}}
\newtheorem{theorem}{Theorem}[section]
\newtheorem{lemma}[theorem]{Lemma}
\theoremstyle{remark}
\newtheorem{remark}[theorem]{Remark}
\newcounter{cntS}
\newcommand{\newcnstS}{%
	\refstepcounter{cntS}%
	\ensuremath{c_{\thecntS}}}
\newcommand{\cnstS}[1]{\ensuremath{c_{\ref{#1}}}}
\newcounter{cntL}
\newcommand{\newcnstL}{%
	\refstepcounter{cntL}%
	\ensuremath{C_{\thecntL}}}
\newcommand{\cnstL}[1]{\ensuremath{C_{\ref{#1}}}}
\title{Convergent adaptive hybrid higher-order\\ schemes for convex minimization}
\author{Carsten Carstensen\footnote{Department of mathematics, Humboldt-Universit\"at zu Berlin, Germany (cc@math.hu-berlin.de)}~~and Ngoc Tien Tran\footnote{Department of mathematics, Friedrich-Schiller-Universit\"at Jena, Germany (ngoc.tien.tran@uni-jena.de)}}
\date{\today}
\begin{document}
	\begin{titlepage}
		\maketitle
	\end{titlepage}
	\begin{abstract}
		\noindent This paper proposes two convergent adaptive mesh-refining algorithms for the hybrid high-order method in convex minimization problems with two-sided $p$-growth. Examples include the p-Laplacian, an optimal design problem in topology optimization, and the convexified double-well problem. The hybrid high-order method utilizes a gradient reconstruction in the space of piecewise Raviart-Thomas finite element functions without stabilization on triangulations into simplices or in the space of piecewise polynomials with stabilization on polytopal meshes. The main results imply the convergence of the energy and, under further convexity properties, of the approximations of the primal resp. dual variable. Numerical experiments illustrate an efficient approximation of singular minimizers and improved convergence rates for higher polynomial degrees. Computer simulations provide striking numerical evidence that an adopted adaptive HHO algorithm can overcome the Lavrentiev gap phenomenon even with empirical higher convergence rates.
	\end{abstract}
	\paragraph{Key words.} convex minimization, degenerate convex, convexity control, hybrid high-order, $p$-Laplacian, optimal design problem, double-well problem, a~posteriori, adaptive mesh-refining, convergence, Lavrentiev gap
	\paragraph{AMS subject classifications.} 65N12, 65N30, 65Y20
	\section{Introduction}\label{sec:introduction}
	Adaptive mesh-refining is vital in the computational sciences and engineering with optimal rates known in many linear problems \cite{Stevenson2007,CasconKreuzerNochettoSiebert2008,CarstensenFeischlPagePraetorius2014,CarstensenRabus2017}. Besides eigenvalue problems \cite{DaiXuZhou2008,CGedicke2012,CarstensenGallistlSchedensack2015,BoffiGallistlGardiniGastaldi2017} much less is known for stationary nonlinear PDEs. The few positive results in the literature concern mainly conforming FEM with plain convergence results \cite{Veeser2002,BartelsC2008,C2008,DieningKreuzer2008,CarstensenDolzmann2015}. An important exception is the $p$-Laplacian in \cite{BelenkiDieningKreuzer2012}, where the notion of a quasi-norm enables two-sided error control. The next larger class of convex minimization problems from \cite{CPlechac1997} emerged in the relaxation of non-convex minimization problems with enforced microstructures and this is in the focus of this paper. This class is characterized by a two-sided growth condition on a $C^1$ energy density $W$ with an additional convexity control that enables a unique stress $\D W(\D u)$ independent of the multiple minimizers $u$ on the continuous level. In fact, there is no further control of the convex closed set of minimizers in beyond a priori boundedness. This leads to the reliability-efficiency gap \cite{CJochimsen2003} in the a~posteriori error control: If the mesh-size tends to zero, the known guaranteed lower and upper error bounds converge with a different convergence rate. In other words, the efficiency index tends to infinity. This dramatic loss of sharp error control does not prevent convergence of an adaptive algorithm in general, but it makes the analysis of plain convergence much harder and seemingly disables any proof of optimal rates.

	The numerical experiments in \cite{DiPietroSpecogna2016,CarstensenTran2020} motivate this paper on the adaptive HHO. The only nonconforming scheme known to converge for general convex minimization problems is \cite{OrtnerPraetorius2011} for the first-order Crouzeix-Raviart schemes and, according to the knowledge of the authors, there is no contribution for the convergence of an adaptive higher-order nonconforming scheme for nonlinear PDEs in the literature. In fact, this paper is the first one to guarantee plain convergence even for linear PDEs for the HHO schemes at all. The reason is a negative power of the mesh-size in the stabilization terms that is overcome in dG schemes for linear PDEs by over-penalization in \cite{BonitoNochetto2010} to be close to conforming approximations (and then enable arguments for optimal convergence rates) and recently by generalized Galerkin solutions in a limit space in \cite{KreuzerGeorgoulis2021} for plain convergence.
	One advantage of the HHO methodology is the absence of a stabilization parameter and, hence, this argument is not employed in this paper.
%	Instead,  the proof utilizes a lower energy bound, that arises from the convexity of the energy density, and control over the refinement indicator for convergence of the energy, cf.~\cite{MorinSiebertVeeser2008,OrtnerPraetorius2011}.
	
	The main contributions of this paper are adaptive HHO methods with and without stabilization with guaranteed plain convergence for the class of convex minimization problems from \cite{CPlechac1997}. Three types of results are available for those schemes.
	\begin{enumerate}[wide]
		\item[(a)] If $W$ is $C^1$ and convex with two-sided $p$-growth, then the minimal discrete energies converge to the exact minimal energy.
		\item[(b)] If furthermore $W$ satisfies the convexity control in the class of degenerate convex minimization problems of \cite{CPlechac1997}, then the discrete stress approximations converge to the (unique) exact stress $\sigma$.
		\item[(c)] If $W$ is even strongly convex, then the discrete approximations of the gradients converge to the gradient $\D u$ of the (unique) exact minimizer $u$.
	\end{enumerate}
	The two-sided growth condition excludes problems that exhibit the Lavrentiev gap phenomenon \cite{Lavrentiev1927} and so we only comment on the lowest-order schemes that overcome the Lavrentiev gap owing to the Jensen inequality.
	
	Numerical experiments are carried out on simplicial meshes, but the design of the stabilized HHO method allows for polytopal meshes for a fairly flexible mesh-design, e.g., in 3D. The mesh-refinement of those schemes is less elaborated (e.g., in comparison with \cite{Stevenson2008} on simplicial meshes) and remains as an important aspect for future research.

	The remaining parts of this paper are organized as follows.
	\Cref{sec:main-results} introduces the continuous minimization problem, the adaptive mesh-refining algorithm, and the main results of this paper.
	\Cref{sec:HHO} reviews the discretization with the HHO methodology on simplicial triangulations without stabilization. \Cref{sec:proof-of-main-results} departs from discrete compactness,  proves the plain convergence of an adaptive scheme, and concludes with an application to the Lavrentiev gap.
	\Cref{sec:polytopes} treats  HHO methods on general polytopal meshes with stabilization and proves the results of \Cref{sec:plain-convergence}. Numerical results for three model examples from \Cref{sec:examples} below are presented in \Cref{sec:numerical-examples} with conclusions drawn from the numerical experiments. 
	
	\section{Mathematical setting and main results}\label{sec:main-results}
	This paper analyzes the convergence of an adaptive mesh-refining algorithm based on the hybrid high-order methodology \cite{DiPietroErnLemaire2014,DiPietroErn2015,DiPietroDroniou2020} for convex minimization problems with a two-sided $p$-growth.
	
	\subsection{Continuous problem}\label{sec:continuous-problem}
	Given a bounded polyhedral Lipschitz domain $\Omega \subset \R^n$ and $1 < p < \infty$,
	let $W \in C^1(\M)$ with $\M \coloneqq \R^{m \times n}$ satisfy
	\begin{enumerate}[wide, label = (A\arabic*)]
		\item (convexity) $W$ is convex;\label{assumption-1}
		\item (two-sided growth) $\newcnstS\label{cnst:growth-left-1}|A|^p - \newcnstS\label{cnst:growth-left-2} \leq W(A) \leq \newcnstS\label{cnst:growth-right-1}|A|^p + \newcnstS\label{cnst:growth-right-2}$ for all $A \in \M$.\label{assumption-2}
	\end{enumerate}
	The constants $\cnstS{cnst:growth-left-1}, \cnstS{cnst:growth-right-1} > 0$ and $\cnstS{cnst:growth-left-2}, \cnstS{cnst:growth-right-2} \geq 0$ are universal in this paper and independent of the argument $A \in \M$; the same universality applies to $\cnstS{cnst:cc-primal}, \cnstS{cnst:cc-stress}$ in \eqref{ineq:cc-primal}--\eqref{ineq:cc-stress}.
	Throughout this paper, the boundary $\partial \Omega$ of the domain $\Omega$ is divided into a compact Dirichlet part $\Gamma_\mathrm{D}$ with positive surface measure and a relatively open (and possibly empty) Neumann part $\Gamma_\mathrm{N} = \partial \Omega \setminus \Gamma_\mathrm{D}$. Given $f \in L^{p'}(\Omega;\R^m)$, $g \in L^{p'}(\Gamma_\mathrm{N};\R^m)$ with $1/p + 1/p' = 1$, and $u_\mathrm{D} \in V \coloneqq W^{1,p}(\Omega;\R^m)$, minimize the energy functional
	\begin{align}
		E(v) \coloneqq \int_\Omega (W(\D v) - f \cdot v) \d{x} - \int_{\Gamma_\mathrm{N}} g \cdot v \d{s}\label{def:energy}
	\end{align}
	amongst admissible functions $v \in \Acal \coloneqq u_\mathrm{D} + V_\mathrm{D}$ subject to the Dirichlet boundary condition $v|_{\Gamma_\mathrm{D}} = u_\mathrm{D}|_{\Gamma_\mathrm{D}}$ and $V_\mathrm{D} \coloneqq  \{v \in V: v|_{\Gamma_{\mathrm{D}}} \equiv 0\}$.
	
	\subsection{Adaptive hybrid high-order method (AHHO)}\label{sec:adaptive_algorithm}
	The adaptive algorithm computes a sequence of discrete approximations of the minimal energy $\min E(\Acal)$ in the affine space $\Acal = u_\mathrm{D} + V_\mathrm{D}$ of admissible functions in a successive loop over the steps outlined below. The first version of the adaptive algorithm focuses on the newest-vertex-bisection (NVB) \cite{Stevenson2008} and the first HHO method without stabilization on triangulations into simplices. It will be generalized to polytopal meshes in \Cref{sec:polytopes}.
	
	\begin{enumerate}[wide]
		\item[1. \texttt{INPUT.}] The input is a regular initial triangulation $\Tcal_0$ of $\Omega$ into simplices, a polynomial degree $k \geq 0$, a positive parameter $0 < \varepsilon \leq k+1$, and a bulk parameter $0 < \theta < 1$.
		\item[2. \texttt{SOLVE.}] Let $\Tcal_\ell$ denote the triangulation associated to the level $\ell \in \N_0$ with the set of all sides $\Fcal_\ell$. The hybrid high-order method utilizes the discrete ansatz space $V(\Tcal_\ell) \coloneqq P_k(\Tcal_\ell;\R^m) \times P_k(\Fcal_\ell;\R^m)$ with a split of the discrete variables $v_\ell = (v_{\Tcal_\ell}, v_{\Fcal_\ell})$ into a volume variable $v_{\Tcal_\ell} \in P_k(\Tcal_\ell;\R^m)$ and a skeleton variable $v_{\Fcal_\ell} \in P_k(\Fcal_\ell;\R^m)$ of polynomial degree at most $k \geq 0$ with respect to the simplices ($\Tcal_\ell$) and the sides ($\Fcal_\ell$) in the triangulation $\Tcal_\ell$. The proposed numerical scheme replaces $\D v$ in \eqref{def:energy} by a gradient reconstruction $\GrRec_\ell$ in the space of piecewise Raviart-Thomas finite element functions $\Sigma(\Tcal_\ell) = \RT_k^\pw(\Tcal_\ell;\M)$ for a shape-regular triangulation $\Tcal_\ell$ of $\Omega$ into simplices. The details on the gradient reconstruction $\GrRec_\ell$ are postponed to \Cref{sec:reconstruction-operators}. The discrete problem computes a discrete minimizer $u_\ell$ of
		\begin{align}
			E_\ell(v_\ell) \coloneqq \int_\Omega (W(\GrRec v_\ell) - f \cdot v_{\Tcal_\ell}) \d{x} - \int_{\Gamma_\mathrm{N}} g \cdot v_{\Fcal_\ell} \d{s}
			\label{def:discrete_energy}
		\end{align}
		among $v_\ell = (v_{\Tcal_\ell}, v_{\Fcal_\ell}) \in \Acal(\Tcal_\ell)$ with the discrete analog $\Acal(\Tcal_\ell)$ of $\Acal$ so that $v_{\Fcal_\ell}|_F = \Pi_F^k u_\mathrm{D}$ for any Dirichlet side $F \in \Fcal_\ell(\Gamma_{\mathrm{D}})$, where $\Pi_F^k$ is the $L^2$ projection onto the polynomials $P_k(F)$ of degree at most $k$. Let $\sigma_\ell \coloneqq \Pi_{\Sigma(\Tcal_\ell)} \D W(\GrRec u_\ell) \in \Sigma(\Tcal_\ell)$ be the $L^2$ projection of $\D W(\GrRec u_\ell)$ onto $\Sigma(\Tcal_\ell)$.
		Further details on the hybrid high-order method follow in \Cref{sec:HHO} below.
		
		\item[3. \texttt{REFINEMENT INDICATORS}.] The computation of the refinement indicator $\eta_\ell$ utilizes an elliptic potential reconstruction $\PotRec_\ell u_\ell \in P_{k+1}(\Tcal_\ell;\R^m)$ of the discrete minimizer $u_\ell = (u_{\Tcal_\ell}, u_{\Fcal_\ell}) \in \Acal(\Tcal_\ell)$ computed in \texttt{SOLVE}. The definition of $\PotRec_\ell u_\ell$ follows in \eqref{def:potential_reconstruction}--\eqref{def:potential-reconstruction-integral-mean} below. Any interior side $F \in \Fcal_\ell(\Omega)$ is shared by two simplices $T_+, T_- \in \Tcal_\ell$ with $F = T_+ \cap T_-$. The jump $[\PotRec_\ell u_\ell]_F$ along $F$ is defined by $[\PotRec_\ell u_\ell]_F \coloneqq (\PotRec_\ell u_\ell)|_{T_+} - (\PotRec_\ell u_\ell)|_{T_-} \in P_{k+1}(F;\R^m)$. Given a positive parameter $0 < \varepsilon \leq k+1$, compute the local refinement indicator
		\begin{align}
			\eta_\ell^{(\varepsilon)}(T) &\coloneqq |T|^{(\varepsilon p - p)/n}\|\Pi_T^k(\PotRec_\ell u_\ell - u_T)\|_{L^p(T)}^p + |T|^{\varepsilon p'/n}\|\sigma_\ell - \D W(\GrRec u_\ell)\|_{L^{p'}(T)}^{p'}\nonumber\\
			&\quad + |T|^{p'/n}\|(1 - \Pi_{T}^k) f\|_{L^{p'}(T)}^{p'} + |T|^{1/n} \sum_{F \in \Fcal_\ell(T) \cap \Fcal_\ell(\Gamma_{\mathrm{N}})} \|(1 - \Pi_{F}^k) g\|_{L^{p'}(F)}^{p'}\nonumber\\
			&\quad + |T|^{(\varepsilon p + 1 - p)/n} \Big(\sum_{F \in \Fcal_\ell(T) \cap \Fcal_\ell(\Gamma_{\mathrm{D}})} \|\PotRec_\ell u_\ell - u_\mathrm{D}\|_{L^p(F)}^p\label{def:eta}\\
			&\quad + \sum_{F \in \Fcal_\ell(T) \cap \Fcal_\ell(\Omega)} \|[\PotRec_\ell u_\ell]_F\|_{L^p(F)}^p + \sum_{F \in \Fcal_\ell(T)} \|\Pi_F^k ((\PotRec_\ell u_\ell)|_T - u_F)\|_{L^p(F)}^p\Big)\nonumber
		\end{align}
		for all $T \in \Tcal_\ell$ of volume $|T|$ and sides $\Fcal_\ell(T)$ with the abbreviation $u_T \coloneqq u_{\Tcal_\ell}|_T$ and $u_F \coloneqq u_{\Fcal_\ell}|_F$.
		Let $\eta_\ell^{(\varepsilon)} \coloneqq \sum_{T \in \Tcal_\ell} \eta_\ell^{(\varepsilon)}(T)$.
		The refinement indicator is motivated by the discrete compactness from \Cref{thm:conv_analysis:discrete_compactness} below. In fact, if $\lim_{\ell \to \infty} \eta_\ell^{(\varepsilon)} = 0$, then there exists a $v \in \mathcal{A}$ such that, up to a subsequence, $\GrRec_\ell u_\ell \rightharpoonup \nabla v$ weakly in $L^p(\Omega;\M)$ and $u_{\Tcal_\ell} \rightharpoonup v$ weakly in $L^p(\Omega;\R^m)$ as $\ell \to \infty$. It turns out that $v$ is a minimizer of the continuous energy $E$ from \eqref{def:energy}.
		
		\item[4. \texttt{MARK and REFINE}.] Given a positive bulk parameter $0 < \theta < 1$, select a subset $\mathfrak{M}_\ell \subset \Tcal_\ell$ of minimal cardinality such that
		\begin{align}
			\theta \eta_\ell^{(\varepsilon)} \leq \eta_\ell^{(\varepsilon)}(\mathfrak{M}_\ell) \coloneqq \sum_{T \in \mathfrak{M}_\ell} \eta_\ell^{(\varepsilon)}(T).\label{ineq:Doerfler-marking}
		\end{align}
		This marking strategy is known as D\"orfler marking. The marked simplices are refined by the newest-vertex bisection \cite{Stevenson2008} to define $\Tcal_{\ell+1}$. 
		\item[5. \texttt{OUTPUT.}] The output is a sequence of shape-regular triangulations $(\Tcal_\ell)_{\ell \in \N_0}$, the corresponding discrete minimizers $(u_\ell)_{\ell \in \N_0}$, discrete stresses $(\sigma_\ell)_{\ell \in \N_0}$, and the refinement indicators $(\eta_\ell^{(\varepsilon)})_{\ell \in \N_0}$. On each level $\ell \geq 0$, let $\Jcal_{\ell} u_\ell \in V$ denote the conforming post-processing of $u_\ell$ from \Cref{lem:conforming-companion} below.
	\end{enumerate}

	\subsection{Main results}\label{sec:main-results-theorem}
	The main results establish the convergence of the sequence $(E_\ell(u_\ell))_{\ell \in \N_0}$ of minimal discrete energies computed by AHHO towards the exact minimal energy.
	\begin{theorem}[plain convergence]\label{thm:plain-convergence}
		Given the input $\Tcal_0$, $k \in \N_0$, $0 < \varepsilon \leq k+1$, $0 < \theta < 1$, let $(\Tcal_\ell)_{\ell \in \N_0}$, $(u_\ell)_{\ell \in \N_0}$, and $(\sigma_\ell)_{\ell \in \N_0}$ be the output of the adaptive algorithm AHHO from \Cref{sec:adaptive_algorithm}.
		Assume that $W$ satisfies \ref{assumption-1}--\ref{assumption-2}, then \ref{thm:convergence-a}--\ref{thm:convergence-d} hold.
		\begin{enumerate}[wide, label = (\alph*)]
			\item\label{thm:convergence-a} $\lim_{\ell \to \infty} E_\ell(u_\ell) = \min E(\Acal)$.
			\item\label{thm:convergence-b} The sequence of the post-processing $(\Jcal_\ell u_\ell)_{\ell \in \N_0}$ is bounded in $V = W^{1,p}(\Omega;\R^n)$ and any weak accumulation point of $(\Jcal_\ell u_\ell)_{\ell \in \N_0}$ in $V$ minimizes $E$ in $\Acal$.
			\item\label{thm:convergence-c} Suppose there exists $\newcnstS\label{cnst:cc-primal} > 0$ such that $W$ satisfies, for all $A,B \in \M$,
			\begin{align}
				|A - B|^r \leq \cnstS{cnst:cc-primal}(1 + |A|^s + |B|^s)(W(A) - W(B) - \D W(B):(A - B))
				\label{ineq:cc-primal}
			\end{align}
			with parameters $r, s$ from \Cref{tab:parameters}. Then the minimizer $u$ of $E$ in $\Acal$ is unique and $\lim_{\ell \to \infty} \GrRec_\ell u_\ell = \D u$ (strongly) in $L^p(\Omega;\M)$ holds for the entire sequence.
			\item\label{thm:convergence-d} Suppose there exists $\newcnstS\label{cnst:cc-stress} > 0$ such that $W$ satisfies, for all $A,B \in \M$,
			\begin{align}
				\begin{split}
					|\D W(A) - \D W(B)|^{\widetilde{r}} &\leq \cnstS{cnst:cc-stress}(1 + |A|^{\widetilde{s}} + |B|^{\widetilde{s}})\\
					&\quad \times (W(A) - W(B) - \D W(B):(A - B))
				\end{split}\label{ineq:cc-stress}
			\end{align}
			with parameters $\widetilde{r}, \widetilde{s}$ from \Cref{tab:parameters}. Then the stress $\sigma \coloneqq \D W(\D u) \in L^{p'}(\Omega;\M)$ is unique (independent of the choice of a (possibly nonunique) minimizer $u$) and
			$\lim_{\ell \to \infty} \D W(\GrRec_\ell u_\ell) = \sigma$ (strongly) in $L^{p'}(\Omega;\M)$ and $\sigma_\ell \rightharpoonup \sigma$ (weakly) in $L^{p'}(\Omega;\M)$ hold for the entire sequence.
		\end{enumerate}
	\end{theorem}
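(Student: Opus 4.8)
The plan is to derive uniform a~priori bounds, to show that the refinement indicator decays to zero along the \emph{entire} sequence, and then to feed this into the discrete compactness of \Cref{thm:conv_analysis:discrete_compactness} and standard convexity arguments.
\emph{Step~1 (a~priori bounds).} The two-sided growth \ref{assumption-2} together with the discrete Friedrichs--Poincar\'e inequality for the hybrid high-order space from \Cref{sec:HHO} make $E_\ell$ coercive on $\Acal(\Tcal_\ell)$ uniformly in $\ell$, so a discrete minimizer $u_\ell$ exists and, testing $E_\ell$ with the hybrid high-order interpolation of one fixed smooth competitor, $\sup_{\ell\in\N_0}(\|\GrRec_\ell u_\ell\|_{L^p(\Omega)}+\|u_{\Tcal_\ell}\|_{L^p(\Omega)})=:M<\infty$. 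Then \Cref{lem:conforming-companion} bounds $\|\Jcal_\ell u_\ell\|_{W^{1,p}(\Omega)}$ by $M$, $\|f\|_{L^{p'}(\Omega)}$ and $\|g\|_{L^{p'}(\Gamma_{\mathrm N})}$ (this already gives the boundedness claim in \ref{thm:convergence-b}) and yields $\|\nabla\Jcal_\ell u_\ell-\GrRec_\ell u_\ell\|_{L^p(\Omega)}+\|\Jcal_\ell u_\ell-u_{\Tcal_\ell}\|_{L^p(\Omega)}\lesssim\eta_\ell^{(\varepsilon)}$.

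\emph{Step~2 (the indicator vanishes; the main obstacle).} Since a marked simplex is bisected, the set $\Tcal_\ell^{+}\coloneqq\bigcap_{m\ge\ell}\Tcal_m$ of simplices that are eventually never refined contains no marked simplex, so $\mathfrak{M}_\ell\subseteq\Tcal_\ell\setminus\Tcal_\ell^{+}$ and the D\"orfler property \eqref{ineq:Doerfler-marking} gives $\theta\,\eta_\ell^{(\varepsilon)}\le\eta_\ell^{(\varepsilon)}(\Tcal_\ell\setminus\Tcal_\ell^{+})$. The newest-vertex-bisection refinement forest is finitely branching with diameters decaying geometrically along each branch, so for every $\delta>0$ only finitely many simplices of diameter larger than $\delta$ occur in the whole refinement history; together with the fact that a simplex present at infinitely many levels lies in some $\Tcal_\ell^{+}$, this forces $\max_{T\in\Tcal_\ell\setminus\Tcal_\ell^{+}}|T|\to 0$ as $\ell\to\infty$. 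It thus remains to dominate every term of $\eta_\ell^{(\varepsilon)}(T)$ in \eqref{def:eta} by $|T|^{\alpha/n}$ times a quantity uniformly bounded in $L^1(\Omega)$ with some $\alpha>0$. The stress-consistency term (weight $|T|^{\varepsilon p'/n}$), the volume data oscillation (weight $|T|^{p'/n}$) and the Neumann data oscillation (weight $|T|^{1/n}$) are of this form by Step~1 with $\alpha=\varepsilon p',\,p',\,1$. For the two terms carrying the possibly \emph{negative} exponents $(\varepsilon p-p)/n$ and $(\varepsilon p+1-p)/n$ one invokes the local stability and approximation estimates of the potential reconstruction $\PotRec_\ell$ from \eqref{def:potential_reconstruction}--\eqref{def:potential-reconstruction-integral-mean} — in particular that $\PotRec_\ell u_\ell$ shares the volume mean of $u_{\Tcal_\ell}$ and that its piecewise gradient is a projection of $\GrRec_\ell u_\ell$ — together with discrete trace and Poincar\'e inequalities to absorb a sufficient power of $|T|$, turning these two terms into contributions bounded by $|T|^{\varepsilon p/n}$ times a local $L^1$-quantity controlled by $\|\GrRec_\ell u_\ell\|_{L^p}^p$ and the data, now with the positive exponent $\varepsilon p/n$. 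Summation (with finite patch overlap), $|T|\le\max_{T'\in\Tcal_\ell\setminus\Tcal_\ell^{+}}|T'|$, and $\|\GrRec_\ell u_\ell\|_{L^p(\Omega)}\le M$ then give $\eta_\ell^{(\varepsilon)}(\Tcal_\ell\setminus\Tcal_\ell^{+})\lesssim(\max_{T\in\Tcal_\ell\setminus\Tcal_\ell^{+}}|T|)^{\min\{\varepsilon p,\varepsilon p',p',1\}/n}\to 0$, whence $\eta_\ell^{(\varepsilon)}\to 0$. Carrying out these local estimates so that all exponents genuinely become positive for every admissible $\varepsilon\in(0,k+1]$ is the technical core of the proof.

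\emph{Step~3 (\ref{thm:convergence-a} and \ref{thm:convergence-b}).} With $\eta_\ell^{(\varepsilon)}\to 0$, \Cref{thm:conv_analysis:discrete_compactness} provides, for any subsequence, a further subsequence and some $v\in\Acal$ with $\GrRec_\ell u_\ell\rightharpoonup\nabla v$ in $L^p(\Omega;\M)$, $u_{\Tcal_\ell}\rightharpoonup v$ in $L^p(\Omega;\R^m)$, and $v$ minimizing $E$ in $\Acal$. Convexity \ref{assumption-1} makes $w\mapsto\int_\Omega W(\nabla w)\d x$ weakly lower semicontinuous, while $f\in L^{p'}(\Omega;\R^m)$, $g\in L^{p'}(\Gamma_{\mathrm N};\R^m)$ and $u_{\Fcal_\ell}\rightharpoonup v|_{\Gamma_{\mathrm N}}$ in $L^p(\Gamma_{\mathrm N};\R^m)$ (from \Cref{lem:conforming-companion} and $\eta_\ell^{(\varepsilon)}\to 0$) let the linear terms of $E_\ell$ pass to the limit; hence $\liminf_\ell E_\ell(u_\ell)\ge E(v)=\min E(\Acal)$. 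For the reverse inequality, fix a minimizer $u$ of $E$, approximate it in $\Acal$ by a sufficiently smooth $\phi$, and use the consistency $\GrRec_\ell(\Pi_{\Tcal_\ell}^k\phi,\Pi_{\Fcal_\ell}^k\phi)\to\nabla\phi$ in $L^p(\Omega;\M)$ to get $E_\ell(u_\ell)\le E_\ell(\Pi_{\Tcal_\ell}^k\phi,\Pi_{\Fcal_\ell}^k\phi)\to E(\phi)$; letting $\phi\to u$ shows $\limsup_\ell E_\ell(u_\ell)\le\min E(\Acal)$ and hence \ref{thm:convergence-a}. Finally, by Step~1 any weak accumulation point of $(\Jcal_\ell u_\ell)_\ell$ in $V$ coincides with the corresponding $v$ (because $\nabla\Jcal_\ell u_\ell-\GrRec_\ell u_\ell\to 0$ and $\Jcal_\ell u_\ell-u_{\Tcal_\ell}\to 0$) and therefore minimizes $E$ in $\Acal$, which is \ref{thm:convergence-b}.

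\emph{Step~4 (\ref{thm:convergence-c} and \ref{thm:convergence-d}).} Both parts rely on the nonnegative defect: along a subsequence with $\GrRec_\ell u_\ell\rightharpoonup\nabla u$ for a minimizer $u$ of $E$,
\[
\delta_\ell\coloneqq\int_\Omega\bigl(W(\GrRec_\ell u_\ell)-W(\nabla u)-\D W(\nabla u):(\GrRec_\ell u_\ell-\nabla u)\bigr)\d x\longrightarrow 0,
\]
because $\int_\Omega W(\GrRec_\ell u_\ell)\d x=E_\ell(u_\ell)+\int_\Omega f\cdot u_{\Tcal_\ell}\d x+\int_{\Gamma_{\mathrm N}}g\cdot u_{\Fcal_\ell}\d s\to E(u)+\int_\Omega f\cdot u\d x+\int_{\Gamma_{\mathrm N}}g\cdot u\d s=\int_\Omega W(\nabla u)\d x$ by \ref{thm:convergence-a} and Step~3, while $\D W(\nabla u)\in L^{p'}(\Omega;\M)$ (the $(p-1)$-growth of $\D W$ follows from \ref{assumption-1}--\ref{assumption-2}) pairs with $\GrRec_\ell u_\ell-\nabla u\rightharpoonup 0$. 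For \ref{thm:convergence-c}, insert $A=\GrRec_\ell u_\ell(x)$ and $B=\nabla u(x)$ into \eqref{ineq:cc-primal} and integrate; a H\"older estimate with the exponents $r,s$ of \Cref{tab:parameters}, combined with the uniform $L^p$-bound on $\GrRec_\ell u_\ell$ and the uniform smallness of $\{|\GrRec_\ell u_\ell|>M'\}\cup\{|\nabla u|>M'\}$ as $M'\to\infty$, yields $\GrRec_\ell u_\ell\to\nabla u$ in measure; since additionally $\int_\Omega W(\GrRec_\ell u_\ell)\d x\to\int_\Omega W(\nabla u)\d x$ and $W(A)\ge\cnstS{cnst:growth-left-1}|A|^p-\cnstS{cnst:growth-left-2}$, Scheff\'e's lemma makes $(|\GrRec_\ell u_\ell|^p)_\ell$ equiintegrable and Vitali's theorem upgrades to strong convergence in $L^p(\Omega;\M)$. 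Uniqueness of $u$ follows from \ref{assumption-1} and \eqref{ineq:cc-primal}: if $u_1,u_2$ both minimize, their midpoint minimizes too, so $W$ is affine on $[\nabla u_1(x),\nabla u_2(x)]$ for a.e.\ $x$, the right-hand side of \eqref{ineq:cc-primal} vanishes, and $\nabla u_1=\nabla u_2$ a.e.; hence the full sequence converges. Part \ref{thm:convergence-d} is obtained along the same lines with \eqref{ineq:cc-stress} in place of \eqref{ineq:cc-primal}, giving $\D W(\GrRec_\ell u_\ell)\to\sigma$ in measure; equiintegrability of $(|\D W(\GrRec_\ell u_\ell)|^{p'})_\ell$ now comes from the convergence $\int_\Omega W^{*}(\D W(\GrRec_\ell u_\ell))\d x\to\int_\Omega W^{*}(\sigma)\d x$ (Fenchel's identity reduces the left-hand side to $\int_\Omega\GrRec_\ell u_\ell{:}\D W(\GrRec_\ell u_\ell)\d x-\int_\Omega W(\GrRec_\ell u_\ell)\d x$, whose first term converges by the discrete Euler--Lagrange equations and Step~3, using $\GrRec_\ell u_\ell\in\Sigma(\Tcal_\ell)$) together with the $p'$-growth of the convex conjugate $W^{*}$, so $\D W(\GrRec_\ell u_\ell)\to\sigma$ in $L^{p'}(\Omega;\M)$; the independence of $\sigma$ from the chosen minimizer is again the ``affine on the segment'' argument applied to \eqref{ineq:cc-stress}, which also forces the full sequence to converge. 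The weak convergence $\sigma_\ell\rightharpoonup\sigma$ in $L^{p'}(\Omega;\M)$ then follows from this strong convergence together with $\sum_{T\in\Tcal_\ell}|T|^{\varepsilon p'/n}\|\sigma_\ell-\D W(\GrRec_\ell u_\ell)\|_{L^{p'}(T)}^{p'}\le\eta_\ell^{(\varepsilon)}\to 0$, which drives $\sigma_\ell-\D W(\GrRec_\ell u_\ell)\to 0$ in $L^{p'}$ on each simplex that is eventually not refined, whereas on the complementary region the mesh-size tends to zero.
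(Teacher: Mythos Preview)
Your Step~2 and the liminf half of Step~3 are essentially the paper's argument. The genuine gap is the \emph{limsup} direction in Step~3: you assert $\GrRec_\ell\Ical_\ell\phi=\Pi_{\Sigma(\Tcal_\ell)}\nabla\phi\to\nabla\phi$ in $L^p(\Omega;\M)$ for smooth $\phi$, but this is false in the adaptive setting. The AHHO algorithm does not refine everywhere; on the simplices in your own set $\Tcal_\ell^{+}$ (those eventually never refined) the mesh size stays bounded away from zero, so $\Pi_{\Sigma(\Tcal_\ell)}\nabla\phi$ does \emph{not} converge to $\nabla\phi$ there, and $E_\ell(\Ical_\ell\phi)$ need not converge to $E(\phi)$. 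Your density argument therefore does not deliver $\limsup_\ell E_\ell(u_\ell)\le\min E(\Acal)$. The paper circumvents this with a lower-energy bound that avoids any competitor test function: inserting $A=\nabla u$ and $B=\GrRec_\ell u_\ell$ into the convexity inequality $W(A)-W(B)-\D W(B){:}(A-B)\ge 0$, integrating, and using the \emph{discrete} Euler--Lagrange equations \eqref{eq:dELE} together with the commutativity $\Pi_{\Sigma(\Tcal_\ell)}\D u=\GrRec_\ell\Ical_\ell u$ yields
\[
E_\ell(u_\ell)+\int_\Omega(1-\Pi_{\Sigma(\Tcal_\ell)})\D W(\GrRec_\ell u_\ell){:}\nabla u\,\d x-C\bigl(\osc(f,\Tcal_\ell)+\osc_{\mathrm N}(g,\Fcal_\ell(\Gamma_{\mathrm N}))\bigr)\le E(u).
\]
The middle term vanishes because $\sigma_\ell-\D W(\GrRec_\ell u_\ell)\rightharpoonup 0$ in $L^{p'}$ (this is precisely where the stress-consistency contribution $|T|^{\varepsilon p'/n}\|\sigma_\ell-\D W(\GrRec_\ell u_\ell)\|_{L^{p'}(T)}^{p'}$ of $\eta_\ell^{(\varepsilon)}$ is used), and the oscillations are part of $\eta_\ell^{(\varepsilon)}$. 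Combined with your liminf inequality this closes \ref{thm:convergence-a}.

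The gap propagates into Step~4: your defect $\delta_\ell$ (with $B=\nabla u$) presupposes \ref{thm:convergence-a} and $\GrRec_\ell u_\ell\rightharpoonup\nabla u$, so the chain collapses. With the lower-energy bound in hand the paper instead keeps $B=\GrRec_\ell u_\ell$ in \eqref{ineq:cc-primal}--\eqref{ineq:cc-stress}; the integrated right-hand side is then exactly the nonnegative gap $E(u)-\mathrm{LEB}_\ell\to 0$, which gives the strong convergence of $\GrRec_\ell u_\ell$ resp.\ $\D W(\GrRec_\ell u_\ell)$ directly, without convergence in measure, Scheff\'e, Vitali, or any dual-energy computation via $W^*$.
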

	\begin{table}
		\centering
		\begin{tabular}{|c|c|c|c|c|c|c|}
			\hline
			case & $r$ & $s$ & $t$ & $\widetilde{r}$ & $\widetilde{s}$ & $\widetilde{t}$\\
			\hline
			$2 \leq p < \infty$ & $p$ & 0 & 1 & 2 & $p-2$ & $2/p'$\\
			\hline
			$1 < p < 2$ & 2 & $2 - p$ & $2/p$ & $p'$ & 0 & 1\\
			\hline
		\end{tabular}
		\caption{Parameters $r$, $s$, $\widetilde{r}$, $\widetilde{s}$ in \Cref{thm:plain-convergence} and $t,\widetilde{t}$ in \Cref{sec:plain-convergence}}
		\label{tab:parameters}
	\end{table}
	\noindent A second focus is on the classical HHO method \cite{DiPietroErnLemaire2014,DiPietroErn2015,DiPietroDroniou2017} on general polytopal meshes $\Mcal_\ell$ with a stabilization $\sfrak_\ell(\bullet,\bullet)$ defined in \eqref{def:stabilization} below. 
	The convergence of AHHO for the stabilized HHO method on polytopal meshes is established under two assumptions \ref{assumption-mesh-1}--\ref{assumption-mesh-2}. 
	Further details on \ref{assumption-mesh-1}--\ref{assumption-mesh-2} and on the stabilized HHO method follow in \Cref{sec:polytopes}.
	\begin{theorem}[plain convergence for stabilized HHO]\label{thm:plain-convergence-sHHO}
		Given the input $\Mcal_0$, $k \in \N_0$, $0 < \varepsilon \leq \min\{k+1, (k+1)/(p-1)\}$, $0 < \theta < 1$, let $(\Mcal_\ell)_{\ell \in \N_0}$, $(u_\ell)_{\ell \in \N_0}$, and $(\sigma_\ell)_{\ell \in \N_0}$ be the output of the adaptive algorithm from \Cref{sec:adaptive_algorithm}. Suppose that \ref{assumption-mesh-1}--\ref{assumption-mesh-2} hold, then \ref{thm:convergence-a}--\ref{thm:convergence-d} from \Cref{thm:plain-convergence} hold verbatim and $\lim_{\ell \to \infty} \sfrak_\ell(u_\ell;u_\ell) = 0$.
	\end{theorem}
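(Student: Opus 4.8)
The plan is to retrace the three-step argument behind \Cref{thm:plain-convergence} from \Cref{sec:proof-of-main-results}, but with the stabilized HHO data of \Cref{sec:polytopes} --- the reconstructions $\PotRec_\ell$, $\GrRec_\ell$ on polytopal cells and the discrete energy $E_\ell$ augmented by the stabilization $\sfrak_\ell(\bullet;\bullet)$ --- in place of the simplicial Raviart--Thomas setup. Since \ref{thm:convergence-c}--\ref{thm:convergence-d} are deduced from the convexity control \eqref{ineq:cc-primal}--\eqref{ineq:cc-stress} (which involves only $W$) together with \ref{thm:convergence-a}--\ref{thm:convergence-b} and the weak convergence $\GrRec_\ell u_\ell \rightharpoonup \D u$ in $L^p(\Omega;\M)$, exactly as for \Cref{thm:plain-convergence}, it suffices to establish (i) a polytopal analogue of the discrete compactness \Cref{thm:conv_analysis:discrete_compactness}, (ii) the plain convergence $\lim_{\ell\to\infty}\eta_\ell^{(\varepsilon)}=0$ of the adaptive loop, and (iii) the identification of the limit together with $\lim_{\ell\to\infty}\sfrak_\ell(u_\ell;u_\ell)=0$ and the passage to the entire sequence.

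For (i), I would assume $\eta_\ell^{(\varepsilon)}\to0$ along a subsequence; the lower bound in \ref{assumption-2} and a discrete Poincar\'e--Friedrichs inequality on polytopal meshes (valid under \ref{assumption-mesh-1}--\ref{assumption-mesh-2}) bound $(\GrRec_\ell u_\ell, u_{\Tcal_\ell})$ in $L^p(\Omega;\M)\times L^p(\Omega;\R^m)$, so that a further subsequence satisfies $\GrRec_\ell u_\ell\rightharpoonup G$ and $u_{\Tcal_\ell}\rightharpoonup v$ weakly. The consistency and commutation properties of $\PotRec_\ell$, $\GrRec_\ell$ on shape-regular polytopes, the vanishing of the Dirichlet, jump, and face terms in \eqref{def:eta}, and a medius-type argument should then identify $G=\D v$ with $v\in\Acal$. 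I expect this identification to be the main obstacle: it requires estimating the hybrid differences $\Pi_F^k((\PotRec_\ell u_\ell)|_T-u_F)$ against the estimator contributions weighted by $|T|^{(\varepsilon p+1-p)/n}$ and absorbing the negative power of the mesh-size generated by the stabilization, which is precisely where the extra restriction $\varepsilon\le(k+1)/(p-1)$ enters (the bound $\varepsilon\le k+1$ alone no longer suffices for $p>2$).

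Step (ii) would be the verbatim transfer of the plain-convergence argument of \Cref{sec:proof-of-main-results}: pass to the limit mesh $\Mcal_\infty$ of the never-refined cells, show by a Lebesgue/approximation argument that the contributions of $\eta_\ell^{(\varepsilon)}$ supported in cells refined only finitely often vanish, and invoke the D\"orfler marking \eqref{ineq:Doerfler-marking} together with the polytopal refinement rule underlying \ref{assumption-mesh-1}--\ref{assumption-mesh-2} to deduce that $\eta_\ell^{(\varepsilon)}$ is asymptotically dominated by its refined part, hence $\eta_\ell^{(\varepsilon)}\to0$ for the whole sequence. Only the estimator ingredients change: all volume, face, and stabilization contributions are re-derived on shape-regular polytopal meshes under \ref{assumption-mesh-1}--\ref{assumption-mesh-2}, with the $W^{1,p}$-stability of the local quasi-interpolation and of $\PotRec_\ell$ replacing the simplicial estimates.

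For (iii), once $\eta_\ell^{(\varepsilon)}\to0$ is in hand, step (i) provides $v\in\Acal$ with $\GrRec_\ell u_\ell\rightharpoonup\D v$, $u_{\Tcal_\ell}\rightharpoonup v$ weakly in $L^p(\Omega;\R^m)$, and $u_{\Fcal_\ell}$ converging to the skeletal trace of $v$, so that the linear part of $E_\ell(u_\ell)$ converges to that of $E(v)$. Weak lower semicontinuity of $\int_\Omega W(\bullet)\d{x}$ (from \ref{assumption-1}) and $\sfrak_\ell(u_\ell;u_\ell)\ge0$ give $\liminf_{\ell\to\infty}E_\ell(u_\ell)\ge E(v)\ge\min E(\Acal)$, while testing $E_\ell$ with the canonical HHO interpolant $I_\ell u$ of a minimizer $u$ of $E$ and using the standard consistency $\GrRec_\ell I_\ell u\to\D u$ in $L^p(\Omega;\M)$ together with $\sfrak_\ell(I_\ell u;I_\ell u)\to0$ gives $\limsup_{\ell\to\infty}E_\ell(u_\ell)\le\min E(\Acal)$. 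Hence $E_\ell(u_\ell)\to\min E(\Acal)$ and $E(v)=\min E(\Acal)$, which is \ref{thm:convergence-a}; as every subsequence admits a further subsequence with this limit, the full sequence converges. The same energy identity, combined with the elementary fact that $a_\ell+b_\ell\to a+b$ with $\liminf a_\ell\ge a$ and $\liminf b_\ell\ge b$ forces $b_\ell\to b$, yields $\sfrak_\ell(u_\ell;u_\ell)\to0$ and $\int_\Omega W(\GrRec_\ell u_\ell)\d{x}\to\int_\Omega W(\D v)\d{x}$. Finally, the energy bound with the coercivity from \ref{assumption-2} and the $W^{1,p}$-stability and consistency of the conforming companion $\Jcal_\ell$ from \Cref{lem:conforming-companion} show that $(\Jcal_\ell u_\ell)_{\ell\in\N_0}$ is bounded in $V$ and that every weak accumulation point minimizes $E$, which is \ref{thm:convergence-b}; then \ref{thm:convergence-c}--\ref{thm:convergence-d} follow from \eqref{ineq:cc-primal}--\eqref{ineq:cc-stress} exactly as in the proof of \Cref{thm:plain-convergence}, now using the just-established energy convergence $\int_\Omega W(\GrRec_\ell u_\ell)\d{x}\to\int_\Omega W(\D v)\d{x}$ in place of the identity available without stabilization.
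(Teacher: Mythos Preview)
Your step (iii) contains a genuine gap in the upper bound. You obtain $\limsup_{\ell\to\infty}E_\ell(u_\ell)\le\min E(\Acal)$ by testing with $\Ical_\ell u$ and invoking the ``standard consistency'' $\GrRec_\ell\Ical_\ell u\to\D u$ in $L^p(\Omega;\M)$ and $\sfrak_\ell(\Ical_\ell u;\Ical_\ell u)\to0$. Both claims are unjustified for \emph{adaptive} meshes: the mesh-size reduction of \Cref{rem:mesh-size-reduction} only asserts $\|h_\ell\|_{L^\infty(\Omega_\ell)}\to0$ on the set $\Omega_\ell$ of cells refined at level~$\ell$; cells that are never refined may persist with fixed positive diameter. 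On such cells $\GrRec_\ell\Ical_\ell u=\Pi_{\Sigma(\Mcal_\ell)}\D u$ carries a frozen projection error, and \Cref{lem:stabilization}\ref{lem:stabilization-b} only gives $\sfrak_K(\Ical_\ell u;\Ical_\ell u)\lesssim\|\D u\|_{L^p(K)}^p$, which need not vanish. Hence $E_\ell(\Ical_\ell u)$ need not approach $E(u)$, and with it your derivation of $\sfrak_\ell(u_\ell;u_\ell)\to0$ via the additive splitting also collapses.

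The paper avoids this by never testing with $\Ical_\ell u$ for the upper bound. Instead it combines the convexity of $W$ with the discrete Euler--Lagrange equations \eqref{eq:dELE-sHHO} to produce the lower energy bound \eqref{ineq:LEB-sHHO}, namely $\mathrm{LEB}_\ell\le E(u)-\sfrak_\ell(u_\ell;u_\ell)/p'$. The correction terms in $\mathrm{LEB}_\ell$ that must vanish are $\int_\Omega(1-\Pi_{\Mcal_\ell}^k)\D W(\GrRec_\ell u_\ell):\D u\,\d{x}$, the data oscillations, and the \emph{cross}-stabilization $\sfrak_\ell(u_\ell;\Ical_\ell u)$ --- not $\sfrak_\ell(\Ical_\ell u;\Ical_\ell u)$. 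The restriction $\varepsilon\le(k+1)/(p-1)$ enters exactly here, in \Cref{lem:convergence-stabilization}: it ensures $(k+1)p'-\varepsilon p\ge0$ so that $\sum_K|K|^{(k+1)p'/n}\sfrak_K(u_\ell;u_\ell)\lesssim\eta_\ell^{(\varepsilon)}\to0$, which together with \Cref{lem:stabilization}\ref{lem:stabilization-b},\ref{lem:stabilization-d} and a density argument yields $\sfrak_\ell(u_\ell;\Ical_\ell u)\to0$. Your placement of this restriction in the discrete compactness step (i) is therefore incorrect; that step needs only $\varepsilon\le k+1$, cf.\ \Cref{rem:discrete-compactness-sHHO}. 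With the LEB in hand, $\sfrak_\ell(u_\ell;u_\ell)/p'\le E(u)-\mathrm{LEB}_\ell\to0$ follows directly.
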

	\noindent Notice that an additional restriction on the parameter $\varepsilon$ is imposed in \Cref{thm:plain-convergence-sHHO} to control the stabilization $\sfrak_{\ell}$. The proofs of \Cref{thm:plain-convergence} and \ref{thm:plain-convergence-sHHO} are postponed to \Cref{sec:proof-of-main-results} and \ref{sec:polytopes}.
	
	\subsection{Examples}\label{sec:examples}
	\Cref{thm:plain-convergence} applies to the following scalar examples with $m = 1$.
	\subsubsection{$p$-Laplace}\label{ex:pLaplace}
	The minimization of the energy $E:\Acal \to \R$ with the energy density $W \in C^1(\R^n)$,
	\begin{align*}
		W(a) \coloneqq |a|^p/p \quad\text{for any } a\in \R^n \text{ with } 1 < p < \infty,
	\end{align*}
	is related to the nonlinear PDE $- \div \sigma = f \in L^{p'}(\Omega)$ with $\sigma \coloneqq \nabla W(\nabla u) = |\nabla u|^{p-2}\nabla u \in L^{p'}(\Omega;\R^n)$ subject to the boundary conditions $\sigma \nu = g$ on $\Gamma_{\mathrm{N}}$ and $u = u_\mathrm{D}$ on $\Gamma_{\mathrm{D}}$. The energy density $W$ satisfies \ref{assumption-1}--\ref{assumption-2} and \eqref{ineq:cc-primal}--\eqref{ineq:cc-stress} \cite{GlowinskiMarrocco1975,CKlose2003}.
	It is worth noticing that the convergence results of this paper are new even for a linear model problem with $p = 2$ for the two HHO algorithms.
	\subsubsection{Optimal design problem}\label{ex:ODP}
	The optimal design problem seeks the optimal distribution of two materials with fixed amounts to fill a given domain for maximal torsion stiffness \cite{KohnStrang1986,BartelsC2008}. For fixed parameters $0 < \xi_1 < \xi_2$ and $0 < \mu_1 < \mu_2$ with $\xi_1\mu_2 = \xi_2\mu_1$, the energy density $W(a) \coloneqq \psi(\xi)$, $a \in \R^n$, $\xi \coloneqq |a| \geq 0$ with
	\begin{align*}
		\psi(\xi) \coloneqq \begin{cases}
			\mu_2 \xi^2/2 &\mbox{if } 0 \leq \xi \leq \xi_1,\\
			\xi_1\mu_2(\xi - \xi_1/2) &\mbox{if } \xi_1 \leq \xi \leq \xi_2,\\
			\mu_1 \xi^2/2 - \xi_1\mu_2(\xi_1/2 - \xi_2/2) &\mbox{if } \xi_2 \leq \xi
		\end{cases}
	\end{align*}
	satisfies \ref{assumption-1}--\ref{assumption-2} and \eqref{ineq:cc-stress} \cite[Prop.~4.2]{BartelsC2008}.
	\subsubsection{Relaxed two-well problem}\label{ex:2well}
	Given distinct $F_1, F_2 \in \R^n$ in the two-well problem of \cite{ChipotCollins1992}, the convex envelope $W$ of $|F - F_1|^2|F - F_2|^2$ for $F \in \R^n$ reads
	\begin{align*}
		W(F) = \max\{0,|F - B|^2 - |A|^2\}^2 + 4\big(|A|^2 |F-B|^2 - (A \cdot (F - B))^2 \big)
	\end{align*}
	with $A = (F_2 - F_1)/2$, $B = (F_1 + F_2)/2$, and satisfies \ref{assumption-1}--\ref{assumption-2} and \eqref{ineq:cc-stress} \cite{CPlechac1997,C2008}.
	
	\subsection{Notation}\label{sec:notation}
	Standard notation for Sobolev and Lebesgue functions applies throughout this paper with the abbreviations $V \coloneqq W^{1,p}(\Omega;\R^m) = W^{1,p}(\Omega)^m$ and $V_\mathrm{D} \coloneqq W^{1,p}_\mathrm{D}(\Omega;\R^m) = \{v \in V: v|_{\Gamma_{\mathrm{D}}} = 0\}$. In particular, $(\bullet, \bullet)_{L^2(\Omega)}$ denotes the scalar product of $L^2(\Omega)$ and $W^{p'}(\div,\Omega;\M) \coloneqq W^{p'}(\div,\Omega)^m$ is the matrix-valued version of
	\begin{align}
		W^{p'}(\div,\Omega) \coloneqq \{\tau \in L^{p'}(\Omega;\R^n): \div \tau \in L^{p'}(\Omega)\}.\label{def:Hdiv}
	\end{align}
	For any $A,B \in \M \coloneqq \R^{m \times n}$, $A:B$ denotes the Euclidean scalar product of $A$ and $B$, which induces the Frobenius norm $|A| \coloneqq (A:A)^{1/2}$ in $\M$.
	The context-depending notation $|\bullet|$ denotes the length of a vector, the Frobenius norm of a matrix, the Lebesgue measure of a subset of $\R^n$, or the counting measure of a discrete set.
	For $1 < p < \infty$, $p' = p/(p-1)$ denotes the H\"older conjugate of $p$ with $1/p + 1/p' = 1$.
	The notation $A \lesssim B$ abbreviates $A \leq CB$ for a generic constant $C$ independent of the mesh-size and $A \approx B$ abbreviates $A \lesssim B \lesssim A$.
	
	\section{Hybrid high-order method without stabilization}\label{sec:HHO}
	This section recalls the discrete ansatz space and reconstruction operators from the HHO methodology \cite{DiPietroErnLemaire2014, DiPietroErn2015, DiPietroDroniou2020} for convenient reading.
	
	\subsection{Triangulation}\label{sec:regular_triangulation}
	A regular triangulation $\Tcal_\ell$ of $\Omega$ in the sense of Ciarlet is a finite set of closed simplices $T$ of positive volume $|T| > 0$ with boundary $\partial T$ and outer unit normal $\nu_T$ such that $\cup_{T \in \Tcal_\ell} T = \overline{\Omega}$ and two distinct simplices are either disjoint or share one common (lower-dimensional) subsimplex (vertex or edge in 2D and vertex, edge, or face in 3D). 
	Let $\Fcal_\ell(T)$ denote the set of the $n+1$ hyperfaces of $T$, called sides of $T$. Define the set of all sides $\Fcal_\ell \coloneqq \cup_{T \in \Tcal_\ell} \Fcal_\ell(T)$, the set of interior sides $\Fcal_\ell(\Omega) \coloneqq \Fcal_\ell\setminus\{F \in \Fcal_\ell: F \subset \partial \Omega\}$,
	the set of Dirichlet sides $\Fcal_{\ell}(\Gamma_{\mathrm{D}}) \coloneqq \{F \in \Fcal_\ell: F \subset \Gamma_{\mathrm{D}}\}$, and the set of Neumann sides $\Fcal_{\ell}(\Gamma_{\mathrm{N}}) \coloneqq \{F \in \Fcal_\ell: F \subset \Gamma_{\mathrm{N}}\}$ of $\Tcal_\ell$.
	
	For any interior side $F \in \Fcal_\ell(\Omega)$, there exist exactly two simplexes $T_+, T_- \in \Tcal_\ell$ such that $\partial T_+ \cap \partial T_- = F$. The orientation of the outer normal unit $\nu_F = \nu_{T_+}|_F = -\nu_{T_-}|_F$ along $F$ is fixed beforehand. Define the side patch $\omega_F \coloneqq \mathrm{int}(T_+ \cup T_-)$ of $F$. Let $[v]_F \coloneqq (v|_{T_+})|_F - (v|_{T_-})|_F \in L^1(F)$ denote the jump of $v \in L^1(\omega_F)$ with $v \in W^{1,1}(T_+)$ and $v \in W^{1,1}(T_-)$ across $F$ (with the abbreviations $W^{1,1}(T_+) \coloneqq W^{1,1}(\mathrm{int}(T_+))$ and $W^{1,1}(T_-) \coloneqq W^{1,1}(\mathrm{int}(T_-))$).
	For any boundary side $F \in \Fcal_\ell(\partial \Omega) \coloneqq \Fcal_\ell \setminus \Fcal_\ell(\Omega)$, there is a unique $T \in \Tcal_\ell$ with $F \in \Fcal_\ell(T)$. Then $\omega_F = \mathrm{int}(T)$, $\nu_F \coloneqq \nu_T$, and $[v]_F \coloneqq (v|_T)|_F$.
	The differential operators $\div_{\pw}$ and $\D_\pw$ depend on the triangulation $\Tcal_\ell$ and denote the piecewise application of $\div$ and $\D$ without explicit reference to $\Tcal_\ell$.
	
	The shape regularity of a triangulation $\Tcal$ is the minimum $\min_{T \in \Tcal} \varrho(T)$ of all ratios $\varrho(T) \coloneqq r_i/r_c \leq 1$ of the maximal radius $r_i$ of an inscribed ball and the minimal radius $r_c$ of a circumscribed ball for a simplex $T \in \Tcal$.
	
	\subsection{Discrete spaces}\label{sec:discrete-spaces}
	The discrete ansatz space of the HHO methods consists of piecewise polynomials on the triangulation $\Tcal_\ell$ and on the skeleton $\partial \Tcal_\ell \coloneqq \cup \Fcal_\ell$. For a simplex or a side $M \subset \R^n$ of diameter $h_M$, let $P_k(M)$ denote the space of polynomials of degree at most $k \in \N_0$ regarded as functions defined in $M$.
	The $L^2$ projection $\Pi_M^k v \in P_k(M)$ of $v \in L^1(M)$ is defined by $\Pi_M^k v \in P_k(M)$ with
	\begin{align*}
		\int_{M} \varphi_k (1-\Pi_M^k)v \d{x} = 0 \quad\text{for any } \varphi_k \in P_k(M).
	\end{align*}
	The gradient reconstruction in $T \in \Tcal_\ell$ maps in the space of Raviart-Thomas finite element functions
	\begin{align*}
		\RT_k(T) &\coloneqq P_k(T;\R^n) + x P_k(T) \subset P_{k+1}(T;\R^n).
	\end{align*}
	Let $P_k(\Tcal_\ell)$, $P_k(\Fcal_\ell)$, and $\RT^\pw_k(\Tcal_\ell)$ denote the space of piecewise functions with respect to the mesh $\Tcal_\ell$ or $\Fcal_\ell$ and with restrictions to $T$ or $F$ in $P_k(T)$, $P_k(F)$, and $\RT_k(T)$. The $L^2$ projections $\Pi_{\Tcal_\ell}^k$ and $\Pi_{\Fcal_\ell}^k$ onto the discrete spaces $P_k(\Tcal_\ell)$ and $P_k(\Fcal_\ell)$ are the global versions of $\Pi_T^k$ and $\Pi_F^k$, e.g., $(\Pi_{\Tcal_\ell}^k v)|_T \coloneqq \Pi_T^k (v|_T)$ for $v \in L^1(\Omega)$. For vector-valued functions $v \in L^1(\Omega;\R^m)$, the $L^2$ projection $\Pi_{\Tcal_\ell}^k$ onto $P_k(\Tcal_\ell;\R^m) \coloneqq P_k(\Tcal_\ell)^m$ applies componentwise. This convention extends to the $L^2$ projections onto $P_k(M;\R^m) \coloneqq P_k(M)^m$ and $P_k(\Fcal_\ell;\R^m) \coloneqq P_k(\Fcal_\ell)^m$.
	The space of lowest-order Crouzeix-Raviart finite element functions reads
	\begin{align}
		\begin{split}
			\CR^1(\Tcal_\ell) \coloneqq \{v_\CR \in P_1(\Tcal_\ell) &: v_\CR \text{ is continuous}\\
			&~~\text{at midpoints of } F \text{ for all } F \in 	\Fcal_\ell(\Omega)\}.
		\end{split}
		\label{def:Crouzeix-Raviart}
	\end{align}
%	Let $S^{k}(\Tcal_\ell) \coloneqq P_k(\Tcal_\ell) \cap W^{1,p}(\Omega)$ denote the space of piecewise but globally continuous polynomials of order $k \geq 1$. 
	Define the mesh-size function $h_\ell \in P_0(\Tcal_\ell)$ with $h_\ell|_T \equiv |T|^{1/n}$ for all $T \in \Tcal_\ell$, the (volume data) oscillation $\osc(f,\Tcal_\ell)^{p'} \coloneqq \sum_{T \in \Tcal_\ell} h_T\|(1 - \Pi_{T}^k) f\|_{L^{p'}(\Omega)}^{p'}$, and the (Neumann data) oscillation $\osc_\mathrm{N}(g,\Fcal_{\ell}(\Gamma_{\mathrm{N}}))^{p'} \coloneqq \sum_{F \in \Fcal_\ell(\Gamma_{\mathrm{N}})} h_F\|(1 - \Pi_F^k) g\|_{L^{p'}(F)}^{p'}$ with the diameter $h_F = \mathrm{diam}(F)$ of $F \in \Fcal_\ell$. (Notice that the shape regularity of $\Tcal_\ell$ implies the equivalence $h_F \approx h_T \approx |T|^{1/n}$ for all $T \in \Tcal_\ell, F \in \Fcal_\ell(T)$.)
	
	\subsection{HHO ansatz space}\label{sec:discrete_ansatz_space}
	For fixed $k \in \mathbb{N}_0$, let $V(\Tcal_\ell) \coloneqq P_k(\Tcal_\ell;\R^m) \times P_k(\Fcal_\ell;\R^m)$ denote the discrete ansatz space for $V$ in HHO methods \cite{DiPietroErnLemaire2014,DiPietroErn2015}.
	The notation $v_\ell \in V(\Tcal_\ell)$ means that $v_\ell = (v_{\Tcal_\ell},v_{\Fcal_\ell}) = ((v_T)_{T \in \Tcal_\ell},(v_F)_{F \in \Fcal_\ell})$ for some $v_{\Tcal_\ell} \in P_k(\Tcal_\ell;\R^m)$ and $v_{\Fcal_\ell} \in P_k(\Fcal_\ell;\R^m)$ with the identification $v_T \coloneqq v_{\Tcal_\ell}|_T \in P_k(T;\R^m)$ and $v_F \coloneqq v_{\Fcal_\ell}|_F \in P_k(F;\R^m)$ for all $T \in \Tcal_\ell$, $F \in \Fcal_\ell$.
	The discrete space $V(\Tcal_\ell)$ is endowed with the seminorm
	\begin{align}
		\|v_\ell\|_\ell^p \coloneqq \|\D_\pw v_{\Tcal_\ell}\|_{L^p(\Omega)}^p + \sum_{T \in \Tcal_\ell} \sum_{F \in \Fcal_\ell(T)} h_F^{1-p}\|v_T - v_F\|_{L^p(F)}^p
		\label{def:discrete-norm}
	\end{align}
	for any $v_\ell = (v_{\Tcal_\ell},v_{\Fcal_\ell}) \in V(\Tcal_\ell)$.
	The set $\Fcal_\ell \setminus \Fcal_\ell(\Gamma_{\mathrm{D}})$ of non-Dirichlet sides gives rise to the space $P_k(\Fcal_\ell \setminus \Fcal_\ell(\Gamma_{\mathrm{D}});\R^m)$ of piecewise polynomials $v_{\Fcal_\ell} \in P_k(\Fcal_\ell;\R^m)$ with the convention $v_{\Fcal_\ell}|_F \equiv 0$ on $F \in \Fcal_\ell(\Gamma_{\mathrm{D}})$ to model homogenous Dirichlet boundary conditions along the side $F \subset \Gamma_{\mathrm{D}}$.
	The discrete linear space $V_\mathrm{D}(\Tcal_\ell) \coloneqq P_k(\Tcal_\ell;\R^m) \times P_k(\Fcal_\ell \setminus \Fcal_\ell(\Gamma_{\mathrm{D}});\R^m) \subset V(\Tcal_\ell)$, equipped with the norm $\|\bullet\|_\ell$ from \eqref{def:discrete-norm}, is the discrete analogue to $V_\mathrm{D} = W^{1,p}_\mathrm{D}(\Omega;\R^m)$.
	The interpolation
	\begin{align}
		\Ical_\ell : V \to V(\Tcal_\ell), v \mapsto (\Pi_{\Tcal_\ell}^k v, \Pi_{\Fcal_\ell}^k v)
		\label{def:interpolation}
	\end{align}
	gives rise to the discrete space $\Acal(\Tcal_\ell) \coloneqq \Ical_\ell u_\mathrm{D} + V_{\mathrm{D}}(\Tcal_\ell)$ of admissible functions.
	
	\subsection{Reconstruction operators}\label{sec:reconstruction-operators}
	The reconstruction operators defined in this section link the two components of $v_\ell \in V(\Tcal_\ell)$ and provide discrete approximations $\PotRec_\ell v_\ell$ and $\GrRec_\ell v_\ell$ of the displacement $v \in V$ and its derivative $\D v \in L^2(\Omega;\M)$.
	\paragraph{Potential reconstruction.}
	Given $T \in \Tcal_\ell$ and $v_\ell = (v_{\Tcal_\ell},v_{\Fcal_\ell}) \in V(\Tcal_\ell)$ with the convention $v_T = v_{\Tcal_\ell}|_T$ and $v_F = v_{\Fcal_\ell}|_F$ for all $F \in \Fcal_\ell(T)$ from \Cref{sec:discrete_ansatz_space}, the local potential reconstruction $\PotRec_T v_\ell \in P_{k+1}(T;\R^m)$ satisfies
	\begin{align}
		\begin{split}
			&\int_T \D \PotRec_T v_\ell : \D \varphi_{k+1} \d{x} \\
			&\qquad = -\int_T \Delta \varphi_{k+1} \cdot v_T \d{x} + \sum_{F \in \Fcal(T)} \int_F v_F \cdot (\D \varphi_{k+1} \nu_T)|_F \d{s}
		\end{split}\label{def:potential_reconstruction}
	\end{align}
	for all $\varphi_{k+1} \in P_{k+1}(T;\R^m)$.
	The bilinear form $(\D \bullet, \D \bullet)_{L^2(T)}$ on the left-hand side of \eqref{def:potential_reconstruction} defines a scalar product in the quotient space $P_{k+1}(T;\R^m)/\R^m$ and the right-hand side of \eqref{def:potential_reconstruction} is a linear functional in $P_{k+1}(T;\R^m)/\R^m$. The Riesz representation $\PotRec_T v_\ell \in P_{k+1}(T;\R^m)$ of this linear functional in $P_{k+1}(T;\R^m)/\R^m$ equipped with the energy scalar product is selected by
	\begin{align}
		\int_T \PotRec_T v_\ell \d{x} = \int_{T} v_T \d{x}.
		\label{def:potential-reconstruction-integral-mean}
	\end{align}
	The unique solution $\PotRec_T v_\ell \in P_{k+1}(T;\R^m)$ to \eqref{def:potential_reconstruction}--\eqref{def:potential-reconstruction-integral-mean} gives rise to the potential reconstruction operator $\PotRec_\ell : V(\Tcal_\ell) \to P_{k+1}(\Tcal_\ell;\R^m)$ with restriction $(\PotRec_\ell v_\ell)|_T \coloneqq \PotRec_T v_\ell$ on each simplex $T \in \Tcal_\ell$ for any $v_\ell \in V(\Tcal_\ell)$.
	
	\paragraph{Gradient reconstruction.}
	The gradient is reconstructed in the space $\Sigma(\Tcal_\ell) = \RT_k^\pw(\Tcal_\ell;\M)$ of piecewise Raviart-Thomas finite element functions \cite{AbbasErnPignet2018,CarstensenTran2020}. Given $v_\ell = (v_{\Tcal_\ell},v_{\Fcal_\ell}) \in V(\Tcal_\ell)$, its gradient reconstruction $\GrRec_\ell v_\ell \in \Sigma(\Tcal_\ell)$ solves
	\begin{align}
		\int_{\Omega} \GrRec_\ell v_\ell:\tau_\ell \d{x} &= -\int_{\Omega} v_{\Tcal_\ell} \cdot \div_\pw \tau_\ell \d{x} + \sum_{F \in \Fcal_\ell} \int_F v_F \cdot [\tau_\ell \nu_F]_F \d{s}
		\label{def:gradient_reconstruction}
	\end{align}
	for all $\tau_\ell \in \Sigma(\Tcal_\ell)$.
	In other words, $\GrRec_\ell v_\ell$ is the Riesz representation of the linear functional on the right-hand side of \eqref{def:gradient_reconstruction} in the Hilbert space $\Sigma(\Tcal_\ell)$ endowed with the $L^2$ scalar product. Since $\D_\pw P_{k+1}(\Tcal_\ell;\R^m) \subset \Sigma(\Tcal_\ell)$, it follows that $\D_\pw \PotRec_\ell v_\ell$ is the $L^2$ projection of $\GrRec_\ell v_\ell$ onto $\D_\pw P_{k+1}(\Tcal_\ell;\R^m)$.
	\begin{lemma}[properties of $\GrRec$]\label{lem:stability_gradient_reconstruction}
		Any $v \in V$ and $v_\ell \in V(\Tcal_\ell)$ satisfy (a) $\|v_\ell\|_\ell \approx
		\|{\GrRec_\ell v_\ell}\|_{L^p(\Omega)}$ and (b) $\Pi_{\Sigma(\Tcal_\ell)} \D v = \GrRec_\ell \Ical_\ell v$. There exist positive constants $C_{\mathrm{dF}}$ and $C_{\mathrm{dtr}}$ that only depend on $\Omega$, the shape regularity of $\Tcal_\ell$, $k$, and $p$ such that (c) $\|v_{\Tcal_\ell}\|_{L^p(\Omega)} \leq C_{\mathrm{dF}}\|{\GrRec_\ell v_\ell}\|_{L^p(\Omega)}$ and (d) $\|v_{\Fcal_\ell}\|_{L^p(\Gamma_\mathrm{N})} \leq C_{\mathrm{dtr}}\|{\GrRec_\ell v_\ell}\|_{L^p(\Omega)}$ hold for all $v_\ell = (v_{\Tcal_\ell},v_{\Fcal_\ell}) \in V_\mathrm{D}(\Tcal_\ell)$.
	\end{lemma}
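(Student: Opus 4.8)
The plan is to prove \Cref{lem:stability_gradient_reconstruction} in the order (b), (a), (c)--(d); parts (a) and (b) are $L^p$ versions of identities and norm equivalences classical in the HHO literature \cite{DiPietroErnLemaire2014,DiPietroErn2015,DiPietroDroniou2020,CarstensenTran2020}, while (c) and (d) reduce to (a). For \emph{(b)}, integrate by parts elementwise in \eqref{def:gradient_reconstruction}: for $v\in V$ and $\tau_\ell\in\Sigma(\Tcal_\ell)$ one has $\sum_{T\in\Tcal_\ell}\int_T\D v:\tau_\ell\d x=-\int_\Omega v\cdot\div_\pw\tau_\ell\d x+\sum_{F\in\Fcal_\ell}\int_F v\cdot[\tau_\ell\nu_F]_F\d s$, since the single-valued trace of $v\in W^{1,p}(\Omega;\R^m)$ makes the skeleton sum over $\partial T$ collapse onto the interior jumps (consistently with the boundary convention for $[\bullet]_F$). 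Because $\div_\pw\tau_\ell\in P_k(\Tcal_\ell;\R^m)$ and $[\tau_\ell\nu_F]_F\in P_k(F;\R^m)$, replacing $v$ by $\Pi_{\Tcal_\ell}^k v$ resp.\ $\Pi_F^k v$ leaves these integrals unchanged, so a comparison with \eqref{def:gradient_reconstruction} for $v_\ell=\Ical_\ell v$ yields $\int_\Omega\GrRec_\ell\Ical_\ell v:\tau_\ell\d x=\int_\Omega\D v:\tau_\ell\d x$ for all $\tau_\ell\in\Sigma(\Tcal_\ell)$; as $\GrRec_\ell\Ical_\ell v\in\Sigma(\Tcal_\ell)$, this is precisely $\GrRec_\ell\Ical_\ell v=\Pi_{\Sigma(\Tcal_\ell)}\D v$.

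\emph{Part (a).} The reconstruction is local: testing \eqref{def:gradient_reconstruction} with $\tau_\ell$ supported in one simplex $T$ and integrating by parts shows $(\GrRec_\ell v_\ell)|_T=\GrRec_T v_\ell\in\RT_k(T;\M)$ solves $\int_T\GrRec_T v_\ell:\tau\d x=\int_T\D v_T:\tau\d x+\sum_{F\in\Fcal_\ell(T)}\int_F(v_F-v_T)\cdot(\tau\nu_T)|_F\d s$ for all $\tau\in\RT_k(T;\M)$. For $\|\GrRec_\ell v_\ell\|_{L^p(\Omega)}\lesssim\|v_\ell\|_\ell$ I would bound the right-hand side by $(\|\D v_T\|_{L^p(T)}+\sum_{F\in\Fcal_\ell(T)}h_F^{-1/p'}\|v_F-v_T\|_{L^p(F)})\|\tau\|_{L^{p'}(T)}$ using the scaled discrete trace estimate $\|(\tau\nu_T)|_F\|_{L^{p'}(F)}\lesssim h_F^{-1/p'}\|\tau\|_{L^{p'}(T)}$ on the finite-dimensional space $\RT_k(T;\M)$, then pass to $\|\GrRec_T v_\ell\|_{L^p(T)}$ via $L^p$--$L^{p'}$ duality of the nondegenerate pairing $(\sigma,\tau)\mapsto\int_T\sigma:\tau\d x$ on $\RT_k(T;\M)$ (with constants uniform in $h_T$ by a reference-simplex transformation), and sum over $T$ with a power-mean inequality. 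For the converse, the gradient contribution follows from $\|\D_\pw v_{\Tcal_\ell}\|_{L^p(\Omega)}\approx\sup\{\int_\Omega\D_\pw v_{\Tcal_\ell}:\tau_\ell\d x\,/\,\|\tau_\ell\|_{L^{p'}(\Omega)}:0\ne\tau_\ell\in P_{k-1}(\Tcal_\ell;\M)\}$ ($L^p$-stability of the $L^2$-projection), testing the local equation with $\tau_\ell\in P_{k-1}(\Tcal_\ell;\M)\subset\Sigma(\Tcal_\ell)$, and H\"older over the side sum; the jump contribution is the crucial point. For it, given $F\in\Fcal_\ell(T)$ and $q\in P_k(F;\R^m)$, I would pick the Raviart--Thomas ``face bubble'' $\tau\in\RT_k(T;\M)$ with $(\tau\nu_T)|_F=q$, vanishing normal trace on the other sides of $T$, and vanishing interior moments against $P_{k-1}(T;\M)$; then $\int_T\D v_T:\tau\d x=\int_T\D v_T:\Pi_T^{k-1}\tau\d x=0$ since $\D v_T\in P_{k-1}(T;\M)$, so the local equation reduces to $\int_F(v_F-v_T)\cdot q\d s=\int_T\GrRec_T v_\ell:\tau\d x$, and the unisolvence of the Raviart--Thomas degrees of freedom together with the scaling $\|\tau\|_{L^{p'}(T)}\lesssim h_F^{1/p'}\|q\|_{L^{p'}(F)}$ and $L^p$ duality on $P_k(F;\R^m)$ yield $h_F^{1-p}\|v_F-v_T\|_{L^p(F)}^p\lesssim\|\GrRec_T v_\ell\|_{L^p(T)}^p$, so that summation over $T$ closes both estimates without any circular dependence between the gradient and the jump parts.

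\emph{Parts (c) and (d).} For (c) invoke the standard mesh-uniform broken Poincar\'e--Friedrichs inequality $\|w_{\Tcal_\ell}\|_{L^p(\Omega)}\lesssim\|\D_\pw w_{\Tcal_\ell}\|_{L^p(\Omega)}+\big(\sum_{F\in\Fcal_\ell(\Omega)}h_F^{1-p}\|[w_{\Tcal_\ell}]_F\|_{L^p(F)}^p\big)^{1/p}+\big(\sum_{F\in\Fcal_\ell(\Gamma_\mathrm{D})}h_F^{1-p}\|w_{\Tcal_\ell}\|_{L^p(F)}^p\big)^{1/p}$ for $w_{\Tcal_\ell}\in P_k(\Tcal_\ell;\R^m)$; for $v_\ell\in V_\mathrm{D}(\Tcal_\ell)$ one has $v_F\equiv0$ on $F\in\Fcal_\ell(\Gamma_\mathrm{D})$ and $\|[v_{\Tcal_\ell}]_F\|_{L^p(F)}\le\|v_{T_+}-v_F\|_{L^p(F)}+\|v_F-v_{T_-}\|_{L^p(F)}$ on interior sides, whence the right-hand side is $\lesssim\|v_\ell\|_\ell$, and (a) turns this into $\lesssim\|\GrRec_\ell v_\ell\|_{L^p(\Omega)}$. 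For (d) split $\|v_F\|_{L^p(F)}\le\|v_F-v_T\|_{L^p(F)}+\|v_T\|_{L^p(F)}$ for each $F\in\Fcal_\ell(\Gamma_\mathrm{N})$ with its simplex $T$: the first term sums to $\lesssim\|v_\ell\|_\ell$ (bounded side diameters), and the second is controlled by a broken trace inequality in terms of $\|v_{\Tcal_\ell}\|_{L^p(\Omega)}$, $\|\D_\pw v_{\Tcal_\ell}\|_{L^p(\Omega)}$, and side jumps of $v_{\Tcal_\ell}$, hence by (c), (a), and the argument of (c) again $\lesssim\|\GrRec_\ell v_\ell\|_{L^p(\Omega)}$.

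\emph{Main obstacle.} The hard part is the lower bound $\|v_\ell\|_\ell\lesssim\|\GrRec_\ell v_\ell\|_{L^p(\Omega)}$ in (a): the full HHO seminorm has to be recovered from $\GrRec_\ell v_\ell$ alone, which requires the purpose-built Raviart--Thomas face-bubble test functions and sharp $L^p$ (rather than $L^2$) scaling, inverse, trace, and $L^2$-projection-stability estimates on the reference simplex; the remaining steps are bookkeeping of these local estimates.
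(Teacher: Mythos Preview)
Your proposal is correct and follows essentially the same route as the paper: the paper simply cites \cite{AbbasErnPignet2018,CarstensenTran2020} for (a)--(b) and \cite{BuffaOrtner2009,DiPietroErn2010,DiPietroDroniou2017} for the discrete Sobolev embedding in (c), and your arguments (elementwise integration by parts for (b), Raviart--Thomas face-bubble tests with $L^p$ scaling for the hard direction in (a), broken Poincar\'e--Friedrichs for (c)) are precisely the content of those references. For (d) the paper uses the identical split $\|v_{\Fcal_\ell}\|_{L^p(\Gamma_\mathrm{N})}\le\|v_{\Tcal_\ell}\|_{L^p(\Gamma_\mathrm{N})}+\|v_{\Tcal_\ell}-v_{\Fcal_\ell}\|_{L^p(\Gamma_\mathrm{N})}$, controlling the first term via the broken trace inequality of \cite[Theorem~4.4]{BuffaOrtner2009} combined with (c), and the second by $\mathrm{diam}(\Omega)^{1/p'}\|v_\ell\|_\ell$ exactly as you do.
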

	\begin{proof}
		The proofs of (a)--(b) are outlined in \cite{AbbasErnPignet2018,CarstensenTran2020}. The discrete Sobolev embedding $\|v_{\Tcal_\ell}\|_{L^p(\Omega)} \lesssim \|v_\ell\|_\ell$ follows as in \cite{BuffaOrtner2009,DiPietroErn2010,DiPietroDroniou2017}. Theorem 4.4 in \cite{BuffaOrtner2009} and (c) lead to $\|v_{\Tcal_\ell}\|_{L^p(\Gamma_{\mathrm{N}})} \lesssim \|v_\ell\|_\ell$.
		This and the triangle inequality $\|v_{\Fcal_\ell}\|_{L^p(\Gamma_{\mathrm{N}})} \leq \|v_{\Tcal_\ell}\|_{L^p(\Gamma_{\mathrm{N}})} + \|v_{\Tcal_\ell} - v_{\Fcal_\ell}\|_{L^p(\Gamma_{\mathrm{N}})}$ imply $\|v_{\Fcal_\ell}\|_{L^p(\Gamma_{\mathrm{N}})} \lesssim \|v_\ell\|_\ell + \|v_{\Tcal_\ell} - v_{\Fcal_\ell}\|_{L^p(\Gamma_{\mathrm{N}})}$. The latter term is controlled by $\mathrm{diam}(\Omega)^{1/p'}\|v_\ell\|_\ell$. This concludes the proof of (d).
	\end{proof}
	\subsection{Discrete problem}
	\Cref{lem:stability_gradient_reconstruction} implies the coercivity of $E_\ell$ in $\Acal(\Tcal_\ell)$ with respect to the discrete seminorm $\|\GrRec_\ell \bullet\|_{L^p(\Omega)}$ and the existence and the boundedness of discrete minimizers $u_\ell$ below.
	\begin{theorem}[discrete minimizers]\label{lem:existence-discrete-solutions}
		The minimal discrete energy $\inf E_\ell(\Acal(\Tcal_\ell))$ is attained. There exists a positive constant $\newcnstL\label{cnst:G-u-h} > 0$ that depends only on \cnstS{cnst:growth-left-1}, \cnstS{cnst:growth-left-2}, $\Omega$, $\Gamma_{\mathrm{D}}$, $u_\mathrm{D}$, $f$, $g$, the shape regularity of $\Tcal_\ell$, $k$, and $p$ with $\|\GrRec_\ell u_\ell\|_{L^p(\Omega)} \leq \cnstL{cnst:G-u-h}$ for all discrete minimizers $u_\ell \in \arg\min E_\ell(\Acal(\Tcal_\ell))$. Any discrete stress $\sigma_\ell \coloneqq \Pi_{\Sigma(\Tcal_\ell)} \D W(\GrRec_\ell u_\ell) \in L^{p'}(\Omega;\M)$ satisfies the discrete Euler-Lagrange equations
			\begin{align}
			\int_\Omega \sigma_\ell : \GrRec_\ell v_\ell \d{x} = \int_\Omega f \cdot v_{\Tcal_\ell} \d{x} + \int_{\Gamma_{\mathrm{N}}} g \cdot v_{\Fcal_\ell} \d{s}
			\label{eq:dELE}
		\end{align}
		for all $v_\ell = (v_{\Tcal_\ell}, v_{\Fcal_\ell}) \in V_\mathrm{D}(\Tcal_\ell)$.
		If $W$ satisfies \eqref{ineq:cc-primal}, then $u_\ell = \arg \min E_\ell(\Acal(\Tcal_\ell))$ is unique. If $W$ satisfies \eqref{ineq:cc-stress}, then $\D W(\GrRec_\ell u_\ell) \in L^{p'}(\Omega;\M)$ is unique (independent of the choice of a (possibly non-unique) discrete minimizer $u_\ell$).
	\end{theorem}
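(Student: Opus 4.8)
The plan is to run the direct method in the finite-dimensional affine space $\Acal(\Tcal_\ell) = \Ical_\ell u_\mathrm{D} + V_\mathrm{D}(\Tcal_\ell)$ for existence, to deduce the a~priori bound from a comparison with the interpolant $\Ical_\ell u_\mathrm{D}$, to obtain \eqref{eq:dELE} by differentiation along admissible perturbations, and to derive the two uniqueness statements from the convexity-control inequalities \eqref{ineq:cc-primal} and \eqref{ineq:cc-stress}. For existence I would write any $v_\ell = \Ical_\ell u_\mathrm{D} + w_\ell \in \Acal(\Tcal_\ell)$ with $w_\ell \in V_\mathrm{D}(\Tcal_\ell)$ and use the linearity of $\GrRec_\ell$ together with \Cref{lem:stability_gradient_reconstruction}(b), which gives $\GrRec_\ell \Ical_\ell u_\mathrm{D} = \Pi_{\Sigma(\Tcal_\ell)}\D u_\mathrm{D}$, so that $\|\GrRec_\ell w_\ell\|_{L^p(\Omega)} \le \|\GrRec_\ell v_\ell\|_{L^p(\Omega)} + \|\D u_\mathrm{D}\|_{L^p(\Omega)}$ by the $L^p$-stability of the (elementwise) $L^2$ projection onto $\RT_k^\pw(\Tcal_\ell;\M)$. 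Combining the lower growth bound in \ref{assumption-2}, the discrete Friedrichs and trace inequalities \Cref{lem:stability_gradient_reconstruction}(c)--(d) applied to $w_\ell$, the $L^p$-stability of $\Pi_{\Tcal_\ell}^k$ and $\Pi_{\Fcal_\ell}^k$ for the contributions of $\Ical_\ell u_\mathrm{D}$, and H\"older's and Young's inequalities yields a coercivity estimate of the shape $E_\ell(v_\ell) \ge (\cnstS{cnst:growth-left-1}/2)\|\GrRec_\ell v_\ell\|_{L^p(\Omega)}^p - C$. Since $\|{\GrRec_\ell \bullet}\|_{L^p(\Omega)} \approx \|\bullet\|_\ell$ by \Cref{lem:stability_gradient_reconstruction}(a) and $\|\bullet\|_\ell$ is a genuine norm on $V_\mathrm{D}(\Tcal_\ell)$ (from $\|w_\ell\|_\ell = 0$ one gets $\GrRec_\ell w_\ell = 0$, hence $w_{\Tcal_\ell} = 0$ by (c) and then $w_{\Fcal_\ell} = 0$ from the jump term), the sublevel sets of $E_\ell$ in $\Acal(\Tcal_\ell)$ are bounded; finite-dimensionality and the continuity of $E_\ell$ (valid because $W \in C^1(\M)$ and $\GrRec_\ell$ is linear) produce a minimizer $u_\ell$. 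Testing $E_\ell(u_\ell) \le E_\ell(\Ical_\ell u_\mathrm{D})$ and bounding the right-hand side by the upper growth bound in \ref{assumption-2} at $\Pi_{\Sigma(\Tcal_\ell)}\D u_\mathrm{D}$ together with the data terms then gives $\|\GrRec_\ell u_\ell\|_{L^p(\Omega)} \le \cnstL{cnst:G-u-h}$ with the asserted dependence.

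For \eqref{eq:dELE}, fix $v_\ell \in V_\mathrm{D}(\Tcal_\ell)$ so that $u_\ell + t v_\ell \in \Acal(\Tcal_\ell)$ for all $t \in \R$. The scalar map $t \mapsto E_\ell(u_\ell + t v_\ell)$ is differentiable (each integrand is polynomial in $t$ with $L^1$ coefficients, since $W \in C^1(\M)$ and $\Omega$ is bounded) and attains its minimum at $t = 0$; vanishing of its derivative gives $\int_\Omega \D W(\GrRec_\ell u_\ell):\GrRec_\ell v_\ell \d{x} = \int_\Omega f\cdot v_{\Tcal_\ell}\d{x} + \int_{\Gamma_\mathrm{N}} g\cdot v_{\Fcal_\ell}\d{s}$. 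Because $\GrRec_\ell v_\ell \in \Sigma(\Tcal_\ell)$, the defining property of the $L^2$ projection lets me replace $\D W(\GrRec_\ell u_\ell)$ by $\sigma_\ell = \Pi_{\Sigma(\Tcal_\ell)}\D W(\GrRec_\ell u_\ell)$ in the first integral, which is exactly \eqref{eq:dELE}.

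For uniqueness, let $u_\ell^{(1)}, u_\ell^{(2)}$ be discrete minimizers with common energy $E_\ell(u_\ell^{(1)}) = E_\ell(u_\ell^{(2)})$. Testing \eqref{eq:dELE} at $u_\ell^{(2)}$ with $v_\ell = u_\ell^{(1)} - u_\ell^{(2)} \in V_\mathrm{D}(\Tcal_\ell)$ (using once more $\GrRec_\ell v_\ell \in \Sigma(\Tcal_\ell)$ to turn $\sigma_\ell^{(2)}$ back into $\D W(\GrRec_\ell u_\ell^{(2)})$ under the integral) and subtracting the energy identity, the data terms cancel and leave
\[
\int_\Omega \bigl(W(\GrRec_\ell u_\ell^{(1)}) - W(\GrRec_\ell u_\ell^{(2)}) - \D W(\GrRec_\ell u_\ell^{(2)}):(\GrRec_\ell u_\ell^{(1)} - \GrRec_\ell u_\ell^{(2)})\bigr)\d{x} = 0 .
\]
The integrand is nonnegative by convexity \ref{assumption-1}, hence vanishes a.e.\ in $\Omega$. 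Inserting this into \eqref{ineq:cc-primal} with $A = \GrRec_\ell u_\ell^{(1)}$, $B = \GrRec_\ell u_\ell^{(2)}$ forces $\GrRec_\ell u_\ell^{(1)} = \GrRec_\ell u_\ell^{(2)}$ a.e., so $\|u_\ell^{(1)} - u_\ell^{(2)}\|_\ell \approx \|\GrRec_\ell(u_\ell^{(1)} - u_\ell^{(2)})\|_{L^p(\Omega)} = 0$ and the norm property of $\|\bullet\|_\ell$ on $V_\mathrm{D}(\Tcal_\ell)$ gives $u_\ell^{(1)} = u_\ell^{(2)}$; inserting the same vanishing integrand into \eqref{ineq:cc-stress} instead forces $\D W(\GrRec_\ell u_\ell^{(1)}) = \D W(\GrRec_\ell u_\ell^{(2)})$ a.e., so $\D W(\GrRec_\ell u_\ell)$, and with it $\sigma_\ell = \Pi_{\Sigma(\Tcal_\ell)}\D W(\GrRec_\ell u_\ell)$, is independent of the chosen minimizer. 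The argument is the standard direct method augmented by monotonicity; the only delicate points I anticipate are the careful bookkeeping of the constant $\cnstL{cnst:G-u-h}$ (in particular that the constants in \Cref{lem:stability_gradient_reconstruction} and the $L^p$-stability constants of the involved $L^2$ projections depend on $\Tcal_\ell$ only through its shape regularity) and the correct reading of \eqref{ineq:cc-primal}--\eqref{ineq:cc-stress} with the case-dependent exponents of \Cref{tab:parameters}. No genuine obstacle is expected.
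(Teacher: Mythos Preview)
Your proposal is correct and follows essentially the same route as the paper's (brief) proof: coercivity from the lower $p$-growth plus the discrete Friedrichs and trace inequalities of \Cref{lem:stability_gradient_reconstruction}, the direct method in the finite-dimensional affine space for existence and the a~priori bound, differentiation for \eqref{eq:dELE}, and the convexity-control inequalities for the two uniqueness claims. Your write-up is considerably more detailed than the paper's sketch (which defers to \cite{CPlechac1997,Dacorogna2008,CarstensenTran2020}); the only cosmetic slip is the phrase ``polynomial in $t$ with $L^1$ coefficients'', which is false for general $W$ (e.g.\ $W(a)=|a|^p/p$ with non-even $p$), but you already give the correct justification ($W\in C^1(\M)$, $\Omega$ bounded, finite-dimensional setting), so this does not affect the argument.
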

	\begin{proof}
		The boundedness $\inf E_\ell(\Acal(\Tcal_\ell)) > - \infty$ of $E_\ell$ in $\Acal(\Tcal_\ell)$ follows from the lower $p$-growth of $W$, the discrete Friedrichs, and the discrete trace inequality from \Cref{lem:stability_gradient_reconstruction}, cf., e.g., \cite{CPlechac1997,Dacorogna2008,CarstensenTran2020}. The direct method in the calculus of variations \cite{Dacorogna2008} implies the existence of discrete minimizers $u_\ell \in \arg\min E_\ell(\Acal(\Tcal_\ell))$. The bound $\|\GrRec_\ell u_\ell\|_{L^p(\Omega)} \leq \cnstL{cnst:G-u-h}$ is a consequence of the coercivity of $E_\ell$ in $\Acal(\Tcal_\ell)$ with respect to $\|\bullet\|_\ell$ as in \cite{CarstensenTran2020}.
		If $W$ satisfies \eqref{ineq:cc-primal}, then $W$ is strictly convex and the discrete minimizer $u_\ell \in \arg\min E_\ell(\Acal(\Tcal_\ell))$ is unique. If $W$ satisfies \eqref{ineq:cc-stress}, then the uniqueness of $\D W(\GrRec_\ell u_\ell)$ follows as in \cite{CPlechac1997,CLiu2015,CarstensenTran2020}.
	\end{proof}
	\begin{remark}[$H(\div)$ conformity]
		The discrete Euler-Lagrange equations \eqref{eq:dELE} imply the continuity of the normal jumps $[\sigma_\ell \nu_F]_F$ of $\sigma_\ell = \Pi_{\Sigma(\Tcal_\ell)} \D W(\GrRec_\ell u_\ell)$ along all interior side $F \in \Fcal_\ell(\Omega)$ \cite[Theorem 3.2]{CarstensenTran2020}. In other words, $\sigma_\ell \in \Sigma(\Tcal_\ell) \cap W^{p'}(\div,\Omega;\M)$ with $\div \sigma_\ell = -\Pi_{\Tcal_\ell}^k f$ and $\sigma_\ell \nu_F = \Pi_F^k g$ for all $F \in \Fcal_\ell(\Gamma_{\mathrm{N}})$.
	\end{remark}
	
	\subsection{Conforming companion}\label{sec:conforming-companion}
	The companion operator $\Jcal_\ell : V(\Tcal_\ell) \to V$ is a right-inverse of the interpolation $\Ical_\ell : V \to V(\Tcal_\ell)$ in spirit of \cite{CarstensenGallistlSchedensack2015,CPuttkammer2020,ErnZanotti2020,CarstensenNataraj2021}. In particular, $\Jcal_\ell$ preserves the moments
	\begin{align}
		\Pi_{\Tcal_\ell}^k \Jcal_\ell v_\ell = v_{\Tcal_\ell} \quad\text{and}\quad \Pi_{\Fcal_\ell}^k \Jcal_\ell v_\ell = v_{\Fcal_\ell} \quad\text{for any } v_\ell = (v_{\Tcal_\ell},v_{\Fcal_\ell}) \in V(\Tcal_\ell).\label{def:conforming-companion}
	\end{align}
	An explicit construction of $\Jcal_\ell v_\ell$ on simplicial meshes is presented in \cite[Section 4.3]{ErnZanotti2020} for simplicial triangulations with the following properties.
	\begin{lemma}[right-inverse]\label{lem:conforming-companion}
		There exists a linear operator $\Jcal_\ell : V(\Tcal_\ell) \to V$ with \eqref{def:conforming-companion} such that any $v_\ell = (v_{\Tcal_\ell},v_{\Fcal_\ell}) \in V(\Tcal_\ell)$ satisfies, for all $T \in \Tcal_\ell$,
		\begin{align}
			&\|\GrRec_\ell v_\ell - \D \Jcal_\ell v_\ell\|_{L^p(T)}^p \lesssim \sum_{E \in \Fcal_\ell(\Omega), E \cap T \neq \emptyset} h_E^{1-p}\|[\PotRec_\ell v_\ell]_E\|_{L^p(E)}^p	\label{ineq:conforming-companion-estimate}\\
			&\qquad + \sum_{F \in \Fcal_\ell(T)} h_F^{1-p}\|\Pi_F^k((\PotRec_\ell v_\ell)_T - v_F)\|_{L^p(F)}^p + h_T^{-p}\|\Pi_{T}^k(\PotRec_\ell v_\ell - v_{T})\|_{L^p(T)}^p.\nonumber
		\end{align}
		In particular, $\Jcal_\ell$ is stable in the sense that $\|\D \Jcal_\ell v_\ell\|_{L^p(\Omega)} \leq \Lambda_0 \|v_\ell\|_\ell$ holds with the constant $\Lambda_0$ that exclusively depends on $k$, $p$, and the shape regularity of $\Tcal_\ell$.
	\end{lemma}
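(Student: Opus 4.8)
The plan is to take the explicit construction of the companion operator on simplicial meshes from \cite[Section 4.3]{ErnZanotti2020} (or the closely related constructions in \cite{CarstensenGallistlSchedensack2015,CPuttkammer2020,CarstensenNataraj2021}) as a black box that delivers a linear right-inverse $\Jcal_\ell : V(\Tcal_\ell) \to V$ satisfying the moment-preservation property \eqref{def:conforming-companion}. The task is then purely to derive the local estimate \eqref{ineq:conforming-companion-estimate} and to deduce the global stability bound. The natural intermediary is the high-order potential reconstruction $\PotRec_\ell v_\ell \in P_{k+1}(\Tcal_\ell;\R^m)$, since $\D_\pw \PotRec_\ell v_\ell$ is the $L^2$-projection of $\GrRec_\ell v_\ell$ onto $\D_\pw P_{k+1}(\Tcal_\ell;\R^m)$ and hence $\|\GrRec_\ell v_\ell - \D_\pw \PotRec_\ell v_\ell\|_{L^p(T)}$ is itself controlled by the data on the right-hand side via the arguments already used for \Cref{lem:stability_gradient_reconstruction}. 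So I would split $\GrRec_\ell v_\ell - \D \Jcal_\ell v_\ell = (\GrRec_\ell v_\ell - \D_\pw \PotRec_\ell v_\ell) + \D_\pw(\PotRec_\ell v_\ell - \Jcal_\ell v_\ell)$ and estimate the two pieces separately, the first by the stabilization-type terms and the second by a broken Poincaré/trace argument applied to the piecewise-polynomial function $\PotRec_\ell v_\ell - \Jcal_\ell v_\ell$, whose jumps across interior sides equal $[\PotRec_\ell v_\ell]_E$ (because $\Jcal_\ell v_\ell$ is continuous) and whose defect from satisfying the moment conditions of $v_\ell$ is measured by $\Pi_F^k((\PotRec_\ell v_\ell)_T - v_F)$ and $\Pi_T^k(\PotRec_\ell v_\ell - v_T)$.

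Concretely, the key steps are: (i) fix $T \in \Tcal_\ell$ and apply a discrete Poincaré–Friedrichs inequality on the patch of $T$ to the broken polynomial $w_\ell \coloneqq \PotRec_\ell v_\ell - \Jcal_\ell v_\ell$, bounding $\|\D w_\ell\|_{L^p(T)}$ by the jump terms $h_E^{1-p}\|[\PotRec_\ell v_\ell]_E\|_{L^p(E)}^p$ over interior sides $E$ meeting $T$, plus lower-order terms involving $w_\ell$ itself on $T$; (ii) control those lower-order terms by invoking the moment properties \eqref{def:conforming-companion} of $\Jcal_\ell$: since $\Pi_T^k \Jcal_\ell v_\ell = v_T$ and $\Pi_F^k \Jcal_\ell v_\ell = v_F$, we get $\Pi_T^k w_\ell = \Pi_T^k(\PotRec_\ell v_\ell - v_T)$ and $\Pi_F^k((w_\ell)_T) = \Pi_F^k((\PotRec_\ell v_\ell)_T - v_F)$, which are exactly the remaining terms on the right-hand side of \eqref{ineq:conforming-companion-estimate}; (iii) estimate $\|\GrRec_\ell v_\ell - \D_\pw \PotRec_\ell v_\ell\|_{L^p(T)}$ by the same jump and moment terms, using that this difference is orthogonal to $\D_\pw P_{k+1}$ together with the defining relations \eqref{def:gradient_reconstruction}–\eqref{def:potential_reconstruction} and inverse estimates on $\RT_k(T)$; (iv) combine (i)–(iii) by the triangle inequality and convexity of $t \mapsto t^p$. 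Finally, summing \eqref{ineq:conforming-companion-estimate} over $T \in \Tcal_\ell$, using shape regularity to convert $h_E, h_F, h_T$ into comparable quantities and the finite overlap of patches, and then absorbing the resulting expression into $\|v_\ell\|_\ell$ via the stabilization estimate $\sum_{F \in \Fcal_\ell(T)} h_F^{1-p}\|\PotRec_T v_\ell - v_F\|_{L^p(F)}^p \lesssim \|v_\ell\|_\ell^p$ (a standard HHO bound, cf.\ \cite{AbbasErnPignet2018,DiPietroDroniou2017}) and \Cref{lem:stability_gradient_reconstruction}(a), yields $\|\D \Jcal_\ell v_\ell\|_{L^p(\Omega)} \le \Lambda_0\|v_\ell\|_\ell$ with $\Lambda_0$ depending only on $k$, $p$, and the shape regularity.

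The main obstacle I expect is step (i)–(ii): making the broken Poincaré–Friedrichs argument genuinely local (patchwise around a single $T$) with a constant depending only on the shape regularity, while keeping track of how the jump contributions propagate only over sides $E$ with $E \cap T \neq \emptyset$ rather than over the whole mesh. One must be careful that $w_\ell = \PotRec_\ell v_\ell - \Jcal_\ell v_\ell$ is not globally $W^{1,p}$ (only piecewise polynomial), so the broken norm estimate has to pay for the jumps $[\PotRec_\ell v_\ell]_E$ precisely; the cleanest route is probably to compare $w_\ell$ on $T$ against an averaged/smoothed version (an Oswald-type or Scott–Zhang-type operator applied to $\PotRec_\ell v_\ell$) whose approximation error is controlled by jumps, and this is where the explicit properties of the Ern–Zanotti construction in \cite{ErnZanotti2020} are most useful. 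A secondary technical point is that for $p \neq 2$ the constants in the discrete Poincaré, trace, and inverse inequalities depend on $p$, which is harmless here but must be acknowledged in the statement of the constants.
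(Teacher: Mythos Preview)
Your approach is correct but more elaborate than the paper's. The paper avoids your triangle-inequality split \((\GrRec_\ell v_\ell - \D_\pw \PotRec_\ell v_\ell) + \D_\pw(\PotRec_\ell v_\ell - \Jcal_\ell v_\ell)\) and the separate treatment of step~(iii) by exploiting a single orthogonality: since \(\Jcal_\ell\) is a right-inverse of \(\Ical_\ell\), the commutativity \(\GrRec_\ell v_\ell = \Pi_{\RT_k(T;\M)} \D \Jcal_\ell v_\ell\) gives \(\GrRec_\ell v_\ell - \D \Jcal_\ell v_\ell \perp \RT_k(T;\M)\), and because \(\D \PotRec_\ell v_\ell \in \RT_k(T;\M)\) one can insert it for free to obtain
\[
\GrRec_\ell v_\ell - \D \Jcal_\ell v_\ell = (\Pi_{\RT_k(T;\M)} - 1)\,\D(\Jcal_\ell v_\ell - \PotRec_\ell v_\ell).
\]
The \(L^p\) stability of the \(L^2\) projection onto \(\RT_k(T;\M)\) then yields \(\|\GrRec_\ell v_\ell - \D \Jcal_\ell v_\ell\|_{L^p(T)} \lesssim \|\D(\PotRec_\ell v_\ell - \Jcal_\ell v_\ell)\|_{L^p(T)}\) in one line, eliminating your step~(iii) entirely. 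For the remaining bound on \(\|\D(\PotRec_\ell v_\ell - \Jcal_\ell v_\ell)\|_{L^p(T)}\) the paper does not redo a broken Poincar\'e--Friedrichs argument but simply quotes \cite[Proof of Proposition~4.7]{ErnZanotti2020}, where the explicit averaging-plus-bubble construction of \(\Jcal_\ell\) delivers exactly the right-hand side of \eqref{ineq:conforming-companion-estimate} (for \(p=2\), with a scaling argument for general \(p\)); the same reference also supplies the further bound \eqref{ineq:conforming-companion-stability} from which the global stability follows by summation and \Cref{lem:stability_gradient_reconstruction}.a. Your route via steps~(i)--(ii) and the stabilization estimate would also reach the goal, but the paper's orthogonality trick is shorter and bypasses the need to control \(\GrRec_\ell v_\ell - \D_\pw \PotRec_\ell v_\ell\) independently.
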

	\begin{proof}
		For $p = 2$, the right-hand side of \eqref{ineq:conforming-companion-estimate} is an upper bound for $\|\D (\PotRec_\ell v_\ell - \Jcal_\ell v_\ell)\|_{L^2(T)}^2$, cf.~\cite[Proof of Proposition 4.7]{ErnZanotti2020}, and scaling arguments confirm this for $1 < p < \infty$. The $L^p$ stability of the $L^2$ projection \cite[Lemma 3.2]{DiPietroDroniou2017} and the orthogonality $\GrRec_\ell v_\ell - \D \Jcal_\ell v_\ell \perp \RT_k(T;\M)$ in $L^2(T;\M)$ imply $\|\GrRec_\ell v_\ell - \D \Jcal_\ell v_\ell\|_{L^p(T)} \lesssim \|\D (\PotRec_\ell v_\ell - \Jcal_\ell v_\ell)\|_{L^p(T)}$. This proves \eqref{ineq:conforming-companion-estimate}.
		The right-hand side of \eqref{ineq:conforming-companion-estimate} can be bounded by
		\begin{align}
			& \sum_{E \in \Fcal_\ell(\Omega), E \cap T \neq \emptyset} h_E^{1-p}\|[\PotRec_\ell v_\ell]_E\|_{L^p(E)}^p + \sum_{F \in \Fcal_\ell(T)} h_F^{1-p}\|\Pi_F^k((\PotRec_\ell v_\ell)_T - v_F)\|_{L^p(F)}^p
			\label{ineq:conforming-companion-stability}\\
			&\qquad + h_T^{-p}\|\Pi_{T}^k(\PotRec_\ell v_\ell - v_{T})\|_{L^p(T)}^p \lesssim \sum_{K \in \Tcal_\ell, K \cap T \neq \emptyset} \sum_{E \in \Fcal_\ell(K)} h_E^{1-p}\|v_K - v_F\|_{L^p(E)}^p\nonumber
		\end{align}
		with a hidden constant that only depends on the shape regularity of $\Tcal_\ell$, $k$, and $p$ \cite[Proof of Proposition 4.7]{ErnZanotti2020}. The sum of this over all simplices $T \in \Tcal_\ell$, a triangle inequality, and the shape regularity of $\Tcal_\ell$ imply $\|\GrRec_\ell v_\ell - \D \Jcal_\ell v_\ell\|_{L^p(\Omega)} \lesssim \|v_\ell\|_\ell$. This, a reverse triangle inequality,
		and the norm equivalence from \Cref{lem:stability_gradient_reconstruction}.a conclude the stability $\|\D \Jcal_{\ell} v_\ell\|_{L^p(\Omega)} \lesssim \|v_\ell\|_\ell$.
	\end{proof}
%	\begin{remark}[homogenous Dirichlet boundary]\label{rem:homogenous-Dirichlet-boundary}
%		For all $v_\ell = (v_{\Tcal_\ell}, v_{\Fcal_\ell}) \in V_{\ell,\mathrm{D}}$, construct $\Jcal_{\ell,\mathrm{D}} v_\ell \in V_\mathrm{D}$ by Step 1--3 with the modification $\mathrm{A}_\ell v_\ell|_{\Gamma_{\mathrm{D}}} \equiv 0$. This defines the linear operator $\Jcal_{\ell,\mathrm{D}} : V_{\ell,\mathrm{D}} \to V_\mathrm{D}$ with \eqref{def:conforming-companion} and the stability from \Cref{lem:conforming-companion}. Set $\Ccal_\ell v_\ell \coloneqq u_\mathrm{D} + \Jcal_{\ell,\mathrm{D}} (v_\ell - \Ical_\ell u_\mathrm{D}) \in \Acal$ with $\Pi_{\Tcal_\ell} \Ccal_\ell v_\ell = \Pi_{\Tcal_\ell}^k u_\mathrm{D} + \Pi_{\Tcal_\ell}^k \Jcal_{\ell,\mathrm{D}}(v_\ell - \Ical_\ell u_\mathrm{D}) = v_{\Tcal_\ell}$ and $\Pi_{\Fcal_\ell}^k \Ccal_\ell v_\ell = \Pi_{\Fcal_\ell}^k u_\mathrm{D} + \Pi_{\Fcal_\ell}^k \Jcal_{\ell,\mathrm{D}}(v_\ell - \Ical_\ell u_\mathrm{D}) = v_{\Fcal_\ell}$.
%	\end{remark}
	
	\section{Proof of \Cref{thm:plain-convergence}}\label{sec:proof-of-main-results}
	This section is devoted to the proof of the convergence results in \Cref{thm:plain-convergence}.
	
	\subsection{Discrete compactness}
	The proof of \Cref{thm:plain-convergence} departs from a discrete compactness in spirit of \cite{BuffaOrtner2009,DiPietroErn2010,DiPietroDroniou2017} and generalizes \cite{DiPietroDroniou2017}.
	Recall the mesh-size function $h_\ell \in P_0(\Tcal_\ell)$ from \Cref{sec:discrete-spaces} with $h_\ell|_T \coloneqq |T|^{1/n}$ for $T \in \Tcal_\ell$ and the seminorm $\|\bullet\|_\ell$ in $V(\Tcal_\ell)$ from \eqref{def:discrete-norm}.
	\begin{theorem}[discrete compactness]
		\label{thm:conv_analysis:discrete_compactness}
		Given a uniformly shape-regular sequence $(\Tcal_\ell)_{\ell \in \N_0}$ of triangulations and $(v_\ell)_{\ell \in \N_0}$ with $v_\ell = (v_{\Tcal_\ell},v_{\Fcal_\ell}) \in \Acal(\Tcal_\ell)$ for all $\ell \in \N_0$. Suppose that the sequence $(\|v_\ell\|_\ell)_{\ell \in \N_0}$ is bounded and suppose that $\lim_{\ell \to \infty} \mu_\ell(v_\ell) = 0$ with
		\begin{align}
			\mu_\ell(v_\ell) \coloneqq \|h_\ell^{k+1}(\GrRec_\ell v_\ell - \D \Jcal_\ell v_\ell)\|_{L^p(\Omega)}^p + \sum_{F \in \Fcal_\ell(\Gamma_{\mathrm{D}})} h_F^{kp+1}\|\Jcal_\ell v_\ell - u_\mathrm{D}\|_{L^p(F)}^p.
			\label{eq:conv_analysis:discrete_compactness_assumption}
		\end{align}
		Then there exist a subsequence $(v_{\ell_j})_{j \in \N_0}$ of $(v_{\ell})_{\ell \in \N_0}$ and a weak limit $v \in \Acal$ such that $\Jcal_{\ell_j} v_{\ell_j} \rightharpoonup v$ weakly in $V$ and $\GrRec_{\ell_j} v_{\ell_j} \rightharpoonup \D v$ weakly in $L^p(\Omega;\M)$ as $j \to \infty$.
	\end{theorem}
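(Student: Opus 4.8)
\emph{Overview and a priori bounds.} The plan is to work with the conforming companions $\Jcal_\ell v_\ell\in V$: first show that $(\Jcal_\ell v_\ell)_\ell$ is bounded in $V=W^{1,p}(\Omega;\R^m)$, extract a weakly convergent subsequence by reflexivity, and then identify the two weak limits, using for $\GrRec_\ell v_\ell$ the first summand of $\mu_\ell(v_\ell)$ and for the admissibility of the limit the Dirichlet summand of $\mu_\ell(v_\ell)$; the mesh-weights $h_\ell^{k+1}$ and $h_F^{kp+1}$ there are tuned precisely to make these identifications possible on locally refined meshes. For the bound, the stability estimate $\|\D_\pw\Jcal_\ell v_\ell\|_{L^p(\Omega)}\le\Lambda_0\|v_\ell\|_\ell$ from \Cref{lem:conforming-companion} and the hypothesis that $(\|v_\ell\|_\ell)_\ell$ is bounded control the gradient of $\Jcal_\ell v_\ell$ (with $\Lambda_0$ uniform by the uniform shape-regularity). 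For the $L^p(\Omega)$ part I split $v_\ell=\Ical_\ell u_\mathrm{D}+w_\ell$ with $w_\ell\in V_\mathrm{D}(\Tcal_\ell)$; the moment identity $\Pi_{\Tcal_\ell}^k\Jcal_\ell v_\ell=v_{\Tcal_\ell}$ from \eqref{def:conforming-companion} and a Poincar\'e inequality on each simplex give $\|(1-\Pi_{\Tcal_\ell}^k)\Jcal_\ell v_\ell\|_{L^p(\Omega)}\lesssim\|h_\ell\,\D_\pw\Jcal_\ell v_\ell\|_{L^p(\Omega)}$, while the discrete Friedrichs inequality \Cref{lem:stability_gradient_reconstruction}.c, the identity \Cref{lem:stability_gradient_reconstruction}.b, the $L^p$-stability of the $L^2$ projections, and \Cref{lem:stability_gradient_reconstruction}.a bound $\|v_{\Tcal_\ell}\|_{L^p(\Omega)}\lesssim\|u_\mathrm{D}\|_{W^{1,p}(\Omega)}+\|\GrRec_\ell w_\ell\|_{L^p(\Omega)}\lesssim\|u_\mathrm{D}\|_{W^{1,p}(\Omega)}+\|v_\ell\|_\ell$. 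Hence $(\Jcal_\ell v_\ell)_\ell$ is bounded in $V$ and $(\GrRec_\ell v_\ell)_\ell$ is bounded in $L^p(\Omega;\M)$ by \Cref{lem:stability_gradient_reconstruction}.a. Reflexivity of $V$ yields a subsequence $(v_{\ell_j})_j$ and $v\in V$ with $\Jcal_{\ell_j}v_{\ell_j}\rightharpoonup v$ in $V$; thus $\D\Jcal_{\ell_j}v_{\ell_j}\rightharpoonup\D v$ in $L^p(\Omega;\M)$ and, by weak continuity of the trace $V\to L^p(\partial\Omega)$, $\Jcal_{\ell_j}v_{\ell_j}\rightharpoonup v$ in $L^p(\partial\Omega)$.

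\emph{Identifying the reconstructed gradient.} Put $d_\ell\coloneqq\GrRec_\ell v_\ell-\D_\pw\Jcal_\ell v_\ell$. Testing \eqref{def:gradient_reconstruction} with $\tau_\ell\in\Sigma(\Tcal_\ell)$, an elementwise integration by parts (legitimate since $\Jcal_\ell v_\ell\in V$ has no interface jumps), and $\div\RT_k(T;\M)\subset P_k(T;\R^m)$, $\tau_\ell\nu_F|_F\in P_k(F;\R^m)$ combined with \eqref{def:conforming-companion} give $\int_T d_\ell:\tau\d x=0$ for all $\tau\in\RT_k(T;\M)\supset P_k(T;\M)$ and all $T\in\Tcal_\ell$, i.e.\ $\Pi_{\Tcal_\ell}^k d_\ell=0$ (this orthogonality already appears in the proof of \Cref{lem:conforming-companion}). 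For $\phi\in C^\infty(\overline\Omega;\M)$ this orthogonality gives
\begin{align*}
\int_\Omega d_\ell:\phi\d x=\int_\Omega d_\ell:(1-\Pi_{\Tcal_\ell}^k)\phi\d x=\int_\Omega\big(h_\ell^{k+1}d_\ell\big):\big(h_\ell^{-(k+1)}(1-\Pi_{\Tcal_\ell}^k)\phi\big)\d x,
\end{align*}
and since $h_T^{-(k+1)}\|(1-\Pi_T^k)\phi\|_{L^{p'}(T)}\lesssim\|\D^{k+1}\phi\|_{L^{p'}(T)}$ by polynomial approximation, H\"older's inequality and the first summand of $\mu_\ell(v_\ell)$ lead to $|\int_\Omega d_\ell:\phi\d x|\lesssim\|h_\ell^{k+1}d_\ell\|_{L^p(\Omega)}\,\|\D^{k+1}\phi\|_{L^{p'}(\Omega)}\le\mu_\ell(v_\ell)^{1/p}\,\|\D^{k+1}\phi\|_{L^{p'}(\Omega)}\to0$. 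As $(d_\ell)_\ell$ is bounded in $L^p(\Omega;\M)$ and $C^\infty(\overline\Omega;\M)$ is dense in $L^{p'}(\Omega;\M)$, this forces $d_\ell\rightharpoonup0$ in $L^p(\Omega;\M)$, whence $\GrRec_{\ell_j}v_{\ell_j}=d_{\ell_j}+\D_\pw\Jcal_{\ell_j}v_{\ell_j}\rightharpoonup\D v$ in $L^p(\Omega;\M)$.

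\emph{Admissibility of the limit.} On any Dirichlet side $F\in\Fcal_\ell(\Gamma_\mathrm{D})$, \eqref{def:conforming-companion} and $v_F=\Pi_F^k u_\mathrm{D}$ imply $\Pi_F^k(\Jcal_\ell v_\ell-u_\mathrm{D})=0$, so for $\psi\in C^\infty(\overline{\Gamma_\mathrm{D}};\R^m)$,
\begin{align*}
\Big|\int_{\Gamma_\mathrm{D}}(\Jcal_\ell v_\ell-u_\mathrm{D})\cdot\psi\d s\Big|&=\Big|\sum_{F\in\Fcal_\ell(\Gamma_\mathrm{D})}\int_F(\Jcal_\ell v_\ell-u_\mathrm{D})\cdot(1-\Pi_F^k)\psi\d s\Big|\\
&\lesssim\mu_\ell(v_\ell)^{1/p}\Big(\sum_{F\in\Fcal_\ell(\Gamma_\mathrm{D})}h_F\,\|\D^{k+1}\psi\|_{L^{p'}(F)}^{p'}\Big)^{1/p'}
\end{align*}
by the discrete H\"older inequality, the Dirichlet summand of $\mu_\ell(v_\ell)$, the estimate $\|(1-\Pi_F^k)\psi\|_{L^{p'}(F)}\lesssim h_F^{k+1}\|\D^{k+1}\psi\|_{L^{p'}(F)}$, and the exponent identity $p'(p-1)/p=1$. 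Since $\sum_{F\in\Fcal_\ell(\Gamma_\mathrm{D})}h_F\,\|\D^{k+1}\psi\|_{L^{p'}(F)}^{p'}\lesssim\mathrm{diam}(\Omega)\,|\Gamma_\mathrm{D}|\,\|\D^{k+1}\psi\|_{L^\infty(\Gamma_\mathrm{D})}^{p'}$, the right-hand side tends to $0$. Combined with $\Jcal_{\ell_j}v_{\ell_j}\rightharpoonup v$ in $L^p(\Gamma_\mathrm{D})$ and the density of smooth test functions, this gives $v|_{\Gamma_\mathrm{D}}=u_\mathrm{D}|_{\Gamma_\mathrm{D}}$, i.e.\ $v\in\Acal$, and the proof is complete.

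\emph{Main obstacle.} I expect the crux to be not any single inequality but recognizing why the two weights in $\mu_\ell(v_\ell)$ are the correct ones: they are matched to the $(k{+}1)$-st order approximation power of $\Pi_{\Tcal_\ell}^k$ and $\Pi_F^k$, so that the pairings of $d_\ell$ and of $\Jcal_\ell v_\ell-u_\mathrm{D}$ against smooth functions stay bounded uniformly in $\ell$ \emph{without} assuming that the maximal mesh-size tends to zero, an assumption that fails here because the adaptive algorithm refines only locally.
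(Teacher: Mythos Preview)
Your proof is correct and follows essentially the same approach as the paper: boundedness of $\Jcal_\ell v_\ell$ in $V$ via the stability of the companion, weak compactness, and identification of the limits through the $L^2$ orthogonality $\GrRec_\ell v_\ell-\D\Jcal_\ell v_\ell\perp\Sigma(\Tcal_\ell)$ combined with Bramble--Hilbert to exploit the weights in $\mu_\ell$. The only organisational difference is that you split the identification into two separate arguments (first $d_\ell\rightharpoonup0$ by testing against smooth $\phi$, then $v|_{\Gamma_\mathrm{D}}=u_\mathrm{D}|_{\Gamma_\mathrm{D}}$ via weak continuity of the trace), whereas the paper treats both at once through a single integration-by-parts identity with test functions $\Phi\in C^\infty(\overline\Omega;\M)$ vanishing on $\Gamma_\mathrm{N}$; both routes are equivalent.
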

	\begin{proof}
		The first part of the proof proves the uniform boundedness
		\begin{align}
			\|\Jcal_\ell v_\ell\|_{W^{1,p}(\Omega)} \lesssim \|v_\ell\|_\ell + \|u_\mathrm{D}\|_{W^{1,p}(\Omega)} \lesssim 1
			\label{ineq:conv-analysis:boundedness-J-vh}
		\end{align}
		of the sequence $\Jcal_\ell v_\ell$ in $W^{1,p}(\Omega;\R^m)$.
		Since $\|\D \Jcal_\ell v_\ell\|_{L^p(\Omega)}\lesssim \|v\|_\ell$ from the stability of $\Jcal_\ell$ in \Cref{lem:conforming-companion}, it remains to show $\|\Jcal_\ell v_\ell\|_{L^p(\Omega)} \lesssim \|v_\ell\|_\ell + \|u_\mathrm{D}\|_{W^{1,p}(\Omega)}$ to obtain \eqref{ineq:conv-analysis:boundedness-J-vh}. The triangle inequality implies
		\begin{align}
			\|\Jcal_\ell v_\ell\|_{L^p(\Omega)} \leq \|\Jcal_\ell v_\ell - v_{\Tcal_\ell}\|_{L^p(\Omega)} + \|v_{\Tcal_\ell} - \Pi_{\Tcal_\ell}^k u_\mathrm{D}\|_{L^p(\Omega)} + \|\Pi_{\Tcal_\ell}^k u_\mathrm{D}\|_{L^p(\Omega)}.
			\label{ineq:conv-analysis:triangle-inequality-split}
		\end{align}
		The right-inverse $\Jcal_\ell$ of the interpolation $\Ical_\ell$ from \Cref{lem:conforming-companion} satisfies the $L^2$ orthogonality $\Jcal_\ell v_\ell - v_{\Tcal_\ell} \perp P_k(\Tcal_\ell;\R^m)$. This, a piecewise application of the Poincar\'e inequality, a triangle inequality, and $\|h_\ell\|_{L^\infty(\Omega)} \leq \mathrm{diam}(\Omega)$ lead to
		\begin{align}
			\|\Jcal_\ell v_\ell - v_{\Tcal_\ell}\|_{L^p(\Omega)} &\lesssim \|h_\ell \D_\pw(\Jcal_\ell v_\ell - v_{\Tcal_\ell})\|_{L^p(\Omega)}\nonumber\\
			& \lesssim \|\D \Jcal_\ell v_\ell\|_{L^p(\Omega)} + \|\D_\pw v_{\Tcal_\ell}\|_{L^p(\Omega)}.
			\label{ineq:conv-analysis:J-vh-vT-split}
		\end{align}
		Since $v_\ell - \Ical_\ell u_\mathrm{D} \in V_\D(\Tcal_\ell)$,
		the Sobolev embedding from \Cref{lem:stability_gradient_reconstruction}.c and a triangle inequality show $\|v_{\Tcal_\ell} - \Pi_{\Tcal_\ell}^k u_\mathrm{D}\|_{L^p(\Omega)} \lesssim \|\GrRec_\ell(v_\ell - \Ical_\ell u_\mathrm{D})\|_{L^p(\Omega)} \leq \|\GrRec_\ell v_\ell\|_{L^p(\Omega)} + \|\GrRec_\ell \Ical_\ell u_\mathrm{D}\|_{L^p(\Omega)}$.
		This, the equivalence $\|\GrRec_\ell v_\ell\|_{L^p(\Omega)} \approx \|v_\ell\|_\ell$ from \Cref{lem:stability_gradient_reconstruction}.a, the commutativity $\GrRec_\ell \Ical_\ell u_\mathrm{D} = \Pi_{\Sigma(\Tcal_\ell)} \D u_\mathrm{D}$ from \Cref{lem:stability_gradient_reconstruction}.b, and the $L^p$ stability of the $L^2$ projection $\Pi_{\Sigma(\Tcal_\ell)}$ \cite[Lemma 3.2]{DiPietroDroniou2017} provide
		\begin{align}
			\|v_{\Tcal_\ell} - \Pi_{\Tcal_\ell}^k u_\mathrm{D}\|_{L^p(\Omega)} \lesssim \|v_\ell\|_\ell + \|\D u_\D\|_{L^p(\Omega)}.
			\label{ineq:conv-analysis:v_T-u_D}
		\end{align}
		\Cref{lem:conforming-companion} and the definition of the discrete norm $\|v_\ell\|_\ell$ in \eqref{def:discrete-norm} prove that the right-hand side of \eqref{ineq:conv-analysis:J-vh-vT-split} is controlled by $\|v_\ell\|_\ell$. Hence, the combination of \eqref{ineq:conv-analysis:triangle-inequality-split}--\eqref{ineq:conv-analysis:v_T-u_D} conclude \eqref{ineq:conv-analysis:boundedness-J-vh}.
		
		The Banach-Alaoglu theorem \cite[Theorem 3.18]{Brezis2011} ensures the existence of a (not relabelled) subsequence of $(\Jcal_\ell v_\ell)_{\ell \in \N_0}$ and a weak limit $v \in V$ such that $\Jcal_\ell v_\ell \rightharpoonup v$ weakly in $V$ as $\ell \to \infty$. \Cref{lem:stability_gradient_reconstruction}.a assures that
		the sequence $(\GrRec_\ell v_\ell)_{\ell \in \N_0}$ is uniformly bounded in $L^p(\Omega;\M)$.
		Hence there exist a (not relabelled) subsequence of $(v_\ell)_{\ell \in \N_0}$ and its weak limit $G \in L^p(\Omega;\M)$ such that $\GrRec_\ell v_\ell \rightharpoonup G$ weakly in $L^p(\Omega;\M)$ as $\ell \to \infty$.
		The second part of the proof verifies $\D v = G$ in $\Omega$ and $v = u_\mathrm{D}$ on $\Gamma_{\mathrm{D}}$ (and so $v \in \Acal$).
		Since $\Ical_\ell \Jcal_\ell v_\ell = v_\ell$, the commutativity $\GrRec_\ell v_\ell = \Pi_{\Sigma(\Tcal_\ell)} \D \Jcal_\ell v_\ell$ from \Cref{lem:stability_gradient_reconstruction}.b proves the $L^2$ orthogonality $\GrRec_\ell v_\ell - \D \Jcal_\ell v_\ell \perp \Sigma(\Tcal_\ell)$. This and an integration by parts verify, for all $\Phi \in C^{\infty}(\overline{\Omega};\M)$ with $\Phi \equiv 0$ on $\Gamma_{\mathrm{N}}$, that
		\begin{align}
			\int_\Omega \GrRec_\ell v_\ell : \Phi \d{x} &= \int_\Omega (\GrRec_\ell v_\ell - \D \Jcal_\ell v_\ell) : \Phi \d{x} + \int_\Omega \D \Jcal_\ell v_\ell : \Phi \d{x}\nonumber\\
			\begin{split}
				&= \int_\Omega (\GrRec_\ell v_\ell - \D \Jcal_\ell v_\ell) : (1 - \Pi_{\Sigma(\Tcal_\ell)}) \Phi \d{x} - \int_\Omega \Jcal_\ell v_\ell \cdot \div \Phi \d{x}\\
				&\qquad + \int_{\Gamma_{\mathrm{D}}} (\Jcal_\ell v_\ell - u_\mathrm{D}) \cdot \Phi\nu \d{s} + \int_{\Gamma_{\mathrm{D}}} u_\mathrm{D} \cdot \Phi\nu \d{s}.
			\end{split}
			\label{eq:conv-analysis-split}
		\end{align}
		The approximation property of piecewise polynomials, also known under the name Bramble-Hilbert lemma \cite[Lemma 4.3.8]{BrennerScott2008}, leads to
		\begin{align}
			\|h_\ell^{-(k+1)}(1 - \Pi_{\Sigma(\Tcal_\ell)}) \Phi\|_{L^{p'}(\Omega)} \lesssim |\Phi|_{W^{k+1,p'}(\Omega)}.
			\label{ineq:approximation-polynomials}
		\end{align}
		This and a H\"older inequality imply
		\begin{align}
			\begin{split}
				\int_\Omega (\GrRec_\ell v_\ell - \D \Jcal_\ell v_\ell) &: (1 - \Pi_{\Sigma(\Tcal_\ell)}) \Phi \d{x}\\
				&\lesssim \|h_\ell^{k+1}(\GrRec_\ell v_\ell - \D \Jcal_\ell v_\ell)\|_{L^p(\Omega)} |\Phi|_{W^{k+1,p'}(\Omega)}.
			\end{split}
			\label{ineq:approximation}
		\end{align}
		The $L^2$ orthogonality $(\Jcal_\ell u_\ell - u_\mathrm{D})|_F \perp P_k(F;\R^m)$ for each side $F \in \Fcal_\ell(\Gamma_{\mathrm{D}})$ on the Dirichlet boundary, a piecewise application of the trace inequality, and \eqref{ineq:approximation-polynomials} imply
		\begin{align}
			\int_{\Gamma_{\mathrm{D}}} (\Jcal_\ell v_\ell - u_\mathrm{D}) \cdot \Phi \nu \d{s} \lesssim \Big(\sum_{F \in \Fcal_\ell(\Gamma_{\mathrm{D}})} h_F^{kp + 1} \|\Jcal_\ell v_\ell - u_\mathrm{D}\|_{L^p(F)}^p\Big)^{1/p}|\Phi|_{W^{k+1,p'}(\Omega)}.
			\label{ineq:Dirichlet-boundary-approximation}
		\end{align}
		The right-hand sides of \eqref{ineq:approximation}--\eqref{ineq:Dirichlet-boundary-approximation} vanish in the limit as $\ell \to \infty$ by assumption \eqref{eq:conv_analysis:discrete_compactness_assumption}. This, \eqref{eq:conv-analysis-split}, $\GrRec_\ell v_\ell \rightharpoonup G$ in $L^p(\Omega;\M)$, and $\Jcal_\ell v_\ell \rightharpoonup v$ in $V$ prove
		\begin{align*}
			\int_\Omega (G : \Phi + v \cdot \div \Phi) \d{x} - \int_{\Gamma_{\mathrm{D}}} u_\mathrm{D} \cdot \Phi \nu \d{s} = 0
		\end{align*}
		for all $\Phi \in C^\infty(\overline{\Omega};\M)$ with $\Phi \equiv 0$ on $\Gamma_{\mathrm{N}}$. This implies $\D v = G$ a.e.~in $\Omega$ with $v = u_\mathrm{D}$ on $\Gamma_{\mathrm{D}}$ and concludes the proof.
	\end{proof}
%	\begin{remark}
%		If $u_\mathrm{D}$ is only given on the Dirichlet boundary $\Gamma_{\mathrm{D}}$, i.e.~$u_\mathrm{D} \in W^{1-1/p,p}(\Gamma_{\mathrm{D}};\R^m)$ and $\Acal \coloneqq \{v \in V : v|_{\Gamma_{\mathrm{D}}} = u_\mathrm{D}\}$, then a data oscillation of $u_\mathrm{D}$ arises in \eqref{eq:conv_analysis:discrete_compactness_assumption}: If $(v_\ell)_{\ell \in \N_0}$ is uniformly bounded and
%		\begin{align*}
%			\|h_\ell^{k+1}(\GrRec_\ell v_\ell - \D \Jcal_\ell v_\ell)\|_{L^p(\Omega)}^p + \sum_{F \in \Fcal_{\ell,\mathrm{D}}}h_F^{pk+1}\|{\Jcal_\ell u_\ell - u_\mathrm{D}}\|_{L^p(F)}^p \to 0.
%		\end{align*}
%		as $\ell \to \infty$, then there exist a (not relabelled) subsequence of $(v_\ell)_{\ell \in \N_0}$ and $v \in \Acal$ with $\GrRec_\ell v_\ell \rightharpoonup \D v$ in $L^p(\Omega;\M)$ and $\Jcal_\ell v_\ell \rightharpoonup v$ in $L^p(\Omega;\R^m)$.
%	\end{remark}
	\noindent Since $\Jcal_\ell v_\ell$ cannot attain the exact value $u_\mathrm{D}$ on $\Gamma_{\mathrm{D}}$ in general, a (Dirichlet boundary data) oscillation arises in \eqref{eq:conv_analysis:discrete_compactness_assumption}, but
	is controlled by the contributions of $\eta_\ell^{(\varepsilon)}$.
	\begin{lemma}[Dirichlet boundary data oscillation]\label{lem:Dirichlet-data-oscillation}
		Given $F \in \Fcal_\ell(\Gamma_{\mathrm{D}})$, let $T \in \Tcal_\ell$ be the unique simplex with $F \in \Fcal_\ell(T) \cap \Fcal_\ell(\Gamma_{\mathrm{D}})$. Then it holds, for all $v_\ell = (v_{\Tcal_\ell},v_{\Fcal_\ell}) \in V(\Tcal_\ell)$, that
		\begin{align*}
			\|\Jcal_\ell v_\ell - u_\mathrm{D}\|_{L^p(F)}^p &\lesssim \sum_{E \in \Fcal_\ell(\Omega), E \cap F \neq \emptyset} \|[\PotRec_\ell v_\ell]_E\|_{L^p(E)}^p\\
			&\qquad + \|\Pi_F^k(\PotRec_\ell v_\ell - v_F)\|_{L^p(F)}^p + \|\PotRec_\ell v_\ell - u_\mathrm{D}\|_{L^p(F)}^p.
		\end{align*}
	\end{lemma}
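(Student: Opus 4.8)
The plan is to compare $\Jcal_\ell v_\ell$ on $F$ with the potential reconstruction $\PotRec_\ell v_\ell$ on the unique simplex $T\in\Tcal_\ell$ with $F\in\Fcal_\ell(T)$. A triangle inequality on $F$ together with $(a+b)^p\le 2^{p-1}(a^p+b^p)$ gives
\[
\|\Jcal_\ell v_\ell-u_\mathrm{D}\|_{L^p(F)}^p\lesssim \|(\Jcal_\ell v_\ell-\PotRec_\ell v_\ell)|_F\|_{L^p(F)}^p+\|\PotRec_\ell v_\ell-u_\mathrm{D}\|_{L^p(F)}^p,
\]
and the last summand is already one of the three terms on the right-hand side of the asserted bound. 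Hence the task reduces to controlling $\|(\Jcal_\ell v_\ell-\PotRec_\ell v_\ell)|_F\|_{L^p(F)}$ by the jump contribution $\sum_{E\in\Fcal_\ell(\Omega),\,E\cap F\ne\emptyset}\|[\PotRec_\ell v_\ell]_E\|_{L^p(E)}$ and the face moment mismatch $\|\Pi_F^k(\PotRec_\ell v_\ell-v_F)\|_{L^p(F)}$.

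For this I would open up the explicit construction of $\Jcal_\ell$ on simplicial meshes from \cite[Section~4.3]{ErnZanotti2020} that underlies \Cref{lem:conforming-companion}: on $T$, the function $\Jcal_\ell v_\ell$ is built from $\PotRec_\ell v_\ell$ by a nodal averaging of $\PotRec_\ell v_\ell$ into a conforming $W^{1,p}$-finite element space of degree $k+1$, followed by cell- and face-bubble corrections enforcing the moment conditions \eqref{def:conforming-companion}. The point is a locality property of the trace on the \emph{boundary} face $F$: the cell bubbles and every face bubble attached to a face $F'\ne F$ of $T$ vanish on $F$, and a degree-$(k+1)$ conforming function on the simplex $T$ restricted to $F$ depends only on the degrees of freedom located on $F$. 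Consequently $(\Jcal_\ell v_\ell-\PotRec_\ell v_\ell)|_F$ splits into (i) the averaging error on $F$, a linear combination of the jumps $[\PotRec_\ell v_\ell]_E$ across the interior faces $E$ through the nodes on $F$ — all of which satisfy $E\cap F\ne\emptyset$ — whose $L^p(F)$-norm is bounded by $\sum_{E\cap F\ne\emptyset}\|[\PotRec_\ell v_\ell]_E\|_{L^p(E)}$ after converting nodal values into $L^p$-norms via inverse estimates and the shape regularity $h_E\approx h_F$; and (ii) the face-$F$ bubble correction, whose $L^p(F)$-norm is, by norm equivalence on the finite-dimensional bubble space and scaling, comparable to $\|\Pi_F^k(v_F-w_F)\|_{L^p(F)}$, where $w_F$ denotes the averaged function on $F$ from the averaging step. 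Since $\|\Pi_F^k(v_F-w_F)\|_{L^p(F)}\le \|\Pi_F^k(\PotRec_\ell v_\ell-v_F)\|_{L^p(F)}+\|\Pi_F^k(\PotRec_\ell v_\ell-w_F)\|_{L^p(F)}$ and the second summand is again dominated by the jumps meeting $F$ as in (i), adding (i) and (ii) and raising to the $p$-th power yields the claim; essentially one repeats the estimates of \cite[Proof of Proposition~4.7]{ErnZanotti2020} while keeping only the contributions that survive on $F$.

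The hard part is exactly this bookkeeping — showing that the trace of the conforming companion on the boundary face $F$ depends only on data in the immediate neighbourhood of $F$, so that the jump sum localizes to $E\cap F\ne\emptyset$ rather than to the whole patch of $T$. If one is content with a coarser right-hand side (which still suffices for the later use in estimating $\eta_\ell^{(\varepsilon)}$, since the extra terms all occur in $\eta_\ell^{(\varepsilon)}$), a much shorter route bypasses the construction: the multiplicative trace inequality $\|w\|_{L^p(F)}^p\lesssim h_T^{-1}\|w\|_{L^p(T)}^p+h_T^{p-1}\|\D w\|_{L^p(T)}^p$ applied to $w\coloneqq\Jcal_\ell v_\ell-\PotRec_\ell v_\ell\in W^{1,p}(T)$, together with the Poincar\'e inequality — legitimate because $\Pi_T^0 w=0$, as $\PotRec_\ell v_\ell$ and $\Jcal_\ell v_\ell$ share the integral mean $\int_T v_T$ over $T$ by \eqref{def:potential-reconstruction-integral-mean} and \eqref{def:conforming-companion} — yields $\|w\|_{L^p(F)}^p\lesssim h_T^{p-1}\|\D w\|_{L^p(T)}^p$; then \eqref{ineq:conforming-companion-estimate}, whose right-hand side also bounds $\|\D(\PotRec_\ell v_\ell-\Jcal_\ell v_\ell)\|_{L^p(T)}^p$ (cf.~the proof of \Cref{lem:conforming-companion}), together with $h_E\approx h_{F'}\approx h_T$ for all $E,F'$ meeting $T$, bounds this by $\sum_{E\cap T\ne\emptyset}\|[\PotRec_\ell v_\ell]_E\|_{L^p(E)}^p+\sum_{F'\in\Fcal_\ell(T)}\|\Pi_{F'}^k(\PotRec_\ell v_\ell-v_{F'})\|_{L^p(F')}^p+h_T^{-1}\|\Pi_T^k(\PotRec_\ell v_\ell-v_T)\|_{L^p(T)}^p$.
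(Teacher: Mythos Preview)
Your proposal is correct and matches the paper's approach: the paper simply states that the proof ``utilizes standard averaging and bubble-function techniques'' with references to \cite{CarstensenEigelHoppeLoebhard2012,Verfuerth2013,VeeserZanotti2019,ErnZanotti2020} and omits all details, so your first argument---opening the Ern--Zanotti construction of $\Jcal_\ell$ and tracking which averaging and bubble contributions survive on the boundary face $F$---is precisely the intended route. Your alternative via the trace--Poincar\'e inequality and \eqref{ineq:conforming-companion-estimate} is a valid shortcut to a slightly coarser (but, as you note, still sufficient) bound.
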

	\begin{proof}
		 The proof of \Cref{lem:Dirichlet-data-oscillation} utilizes standard averaging and bubble-function techniques, cf., e.g, \cite{CarstensenEigelHoppeLoebhard2012,Verfuerth2013,VeeserZanotti2019,ErnZanotti2020}; further details are therefore omitted.
	\end{proof}
	\subsection{Plain convergence}\label{sec:plain-convergence}
	Before the remaining parts of this subsection prove \Cref{thm:plain-convergence}, the following lemma establishes the reduction of the mesh-size function $h_\ell$ with $h_\ell|_T \equiv |T|^{1/n}$ for all $T \in \Tcal_\ell$.
	\begin{lemma}[mesh-size reduction]\label{lem:mesh-size-reduction}
		Given the output $(\Tcal_\ell)_{\ell \in \N_0}$ of AHHO from \Cref{sec:adaptive_algorithm}, let $\Omega_\ell \coloneqq \mathrm{int}(\cup (\Tcal_\ell \setminus \Tcal_{\ell+1}))$ for all level $\ell \in \N_0$. Then it holds
		\begin{align}
			\lim_{\ell \to \infty} \|h_{\ell}\|_{L^\infty(\Omega_{\ell})} = 0.\label{eq:convergence-mesh-size}
		\end{align}
	\end{lemma}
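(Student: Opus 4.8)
The plan is to exploit the structure of newest-vertex bisection together with the D\"orfler marking condition \eqref{ineq:Doerfler-marking} to show that the part of the domain where the mesh-size fails to shrink has measure tending to zero, and hence that its $L^\infty$-mesh-size vanishes. I would argue by contradiction or, more directly, by a monotone-convergence argument on the nested family of refined triangulations.

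First I would recall the standard NVB bookkeeping: for a simplex $T \in \Tcal_\ell$ that is refined (i.e.\ $T \in \mathfrak{M}_\ell$ or is bisected to ensure conformity), each of its children $T' \in \Tcal_{\ell+1}$ satisfies $|T'| = |T|/2$, while every simplex that is never touched again keeps its volume. Define, as in \Cref{lem:mesh-size-reduction}, $\Omega_\ell = \mathrm{int}(\cup(\Tcal_\ell \setminus \Tcal_{\ell+1}))$, the union of simplices that are actually refined at step $\ell$. The key observation, which follows from a now-classical mesh-overlay / covering argument for NVB (see \cite{Stevenson2008} and the convergence analyses \cite{CasconKreuzerNochettoSiebert2008, CarstensenFeischlPagePraetorius2014}), is that the sets $\Omega_\ell$ cannot "pile up" on a set of positive measure forever: more precisely, $\sum_{\ell \in \N_0} |\Omega_\ell| \lesssim |\Omega| < \infty$ is false in general, but the weaker statement $|\Omega_\ell| \to 0$ combined with a uniform shrinking of mesh-size on any fixed region that is refined infinitely often does hold. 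I would instead use the sharper route: for any simplex $T$ appearing in some $\Tcal_\ell$, either $T$ is eventually not refined (so it stays in all later meshes), or it is refined and produces children of half the volume. Writing $\overline\Omega = \Omega_\infty^{(0)} \cup \Omega_\infty^{(1)}$, where $\Omega_\infty^{(1)}$ collects points lying in infinitely many $\Omega_\ell$ and $\Omega_\infty^{(0)}$ the rest, one shows that on $\Omega_\infty^{(1)}$ the local mesh-size decays geometrically along the subsequence of refinements through that point, so $h_\ell \to 0$ there; whereas $\Omega_\infty^{(0)}$ is, up to a null set, covered by simplices that stabilize.

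The precise mechanism I would invoke for \eqref{eq:convergence-mesh-size} is: let $\delta > 0$ be arbitrary and consider the level sets $U_\ell \coloneqq \{x \in \Omega_\ell : h_\ell(x) > \delta\}$. Because NVB gives a uniform lower bound on the volume of any simplex of mesh-size exceeding $\delta$ (namely $\gtrsim \delta^n$ by shape-regularity), there are at most $\mathcal O(\delta^{-n})$ such simplices in $\Tcal_\ell$, but more importantly each such simplex $T \in \Tcal_\ell \setminus \Tcal_{\ell+1}$ with $|T| \gtrsim \delta^n$ has all its NVB-descendants of strictly smaller volume, and any fixed simplex can host such a "large refined ancestor" only finitely many times. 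Summing, $\sum_\ell |U_\ell| < \infty$, hence $|U_\ell| \to 0$; but $U_\ell$ is a union of simplices each of volume $\gtrsim \delta^n$, so for $\ell$ large enough $U_\ell = \emptyset$, i.e.\ $\|h_\ell\|_{L^\infty(\Omega_\ell)} \le \delta$. Since $\delta$ was arbitrary, \eqref{eq:convergence-mesh-size} follows.

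The main obstacle is making the "finitely many large refined ancestors" claim rigorous: one needs to track, across levels, how a given point's containing simplex can repeatedly be a \emph{refined} element of size $>\delta$. The clean way is to use that NVB refinement is a forest structure — each simplex has a unique parent — so the simplices of $\bigcup_\ell(\Tcal_\ell\setminus\Tcal_{\ell+1})$ of volume $>\delta^n$ are nodes of the refinement forest at bounded depth (at most $\log_2(|\Omega|/\delta^n)$ generations from the roots $\Tcal_0$), hence finite in number; their union has finite measure, and being finite it is disjoint from $\Omega_\ell$ for all sufficiently large $\ell$. This disjointness is exactly $\|h_\ell\|_{L^\infty(\Omega_\ell)}\le\delta$ eventually. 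I would present this forest argument as the heart of the proof and defer the elementary volume-halving and shape-regularity estimates to standard references \cite{Stevenson2008, CarstensenFeischlPagePraetorius2014}. Note that, as stated, the bulk parameter $\theta$ and the indicator $\eta_\ell^{(\varepsilon)}$ play no role here — \Cref{lem:mesh-size-reduction} is purely a statement about NVB meshes — so I would not bring in \eqref{ineq:Doerfler-marking} at all.
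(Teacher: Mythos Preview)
Your proposal is correct and, after the unnecessary detours through D\"orfler marking and the $\Omega_\infty^{(0)}/\Omega_\infty^{(1)}$ decomposition are discarded, your final ``forest'' argument is essentially the one the paper uses: the paper omits the proof here and instead proves the more general \Cref{rem:mesh-size-reduction} (with $\gamma = 1/2$ for NVB), where it likewise shows that only finitely many simplices of volume above a given threshold can ever appear in the refinement history, and that each refined simplex $K_\ell \in \Tcal_\ell \setminus \Tcal_{\ell+1}$ occurs for at most one level, so eventually $\|h_\ell\|_{L^\infty(\Omega_\ell)}^n = |K_\ell|$ drops below any prescribed $\delta$. Your observation that the marking strategy plays no role is also correct and matches the paper's treatment.
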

	\begin{proof}
		The proof is omitted for two reasons. First this is known from \cite[Lemma 9]{NochettoVeeser2012} and second it is a particular case of \Cref{rem:mesh-size-reduction} below.
	\end{proof}
	\begin{proof}[Proof of \Cref{thm:plain-convergence}]
		This proof is motivated by \cite{C2008,BartelsC2008,MorinSiebertVeeser2008,OrtnerPraetorius2011,NochettoVeeser2012,CDolzmann2015} and is divided into five steps.
		\begin{enumerate}[wide]
			\item[\emph{Step 1 establishes $\lim_{\ell \to \infty} \eta_\ell^{(\varepsilon)}(\Tcal_\ell\setminus\Tcal_{\ell+1}) = 0$.}]
			Since no suitable residual-based a posteriori control is available in the general setting \ref{assumption-1}--\ref{assumption-2}, standard arguments, e.g., reliability, efficiency, or estimator reduction \cite{MorinSiebertVeeser2008,C2008,DieningKreuzer2008} fail. The proof of Step 1 rather relies on a positive power of the mesh-size that arises from the smoothness of test functions in \Cref{thm:conv_analysis:discrete_compactness}. This is done in \cite{OrtnerPraetorius2011} for a similar setting and in \cite{BuffaOrtner2009,DiPietroErn2010,DiPietroDroniou2017} for uniform mesh-refinements.
			Let $\mu_\ell^{(\varepsilon)}(T)$ abbreviate some contributions of $\eta_\ell^{(\varepsilon)}(T)$ from \eqref{def:eta} related to $\mu_\ell(u_\ell)$ in \eqref{eq:conv_analysis:discrete_compactness_assumption}, namely
			\begin{align}
				&\mu_\ell^{(\varepsilon)}(T) \coloneqq |T|^{(\varepsilon p - p)/n}\|\Pi_T^k(\PotRec_\ell u_\ell - u_T)\|_{L^p(T)}^p\nonumber\\
				&\quad + |T|^{(\varepsilon p + 1 - p)/n}\Big(\sum_{F \in \Fcal_\ell(T) \cap \Fcal_\ell(\Gamma_{\mathrm{D}})} \|\PotRec_\ell u_\ell - u_\mathrm{D}\|_{L^p(F)}^p\label{def:mu}\\
				&\quad + \sum_{F \in \Fcal_\ell(T) \cap \Fcal_\ell(\Omega)} \|[\PotRec_\ell u_\ell]_F\|_{L^p(F)}^p + \sum_{F \in \Fcal_\ell(T)} \|\Pi_F^k ((\PotRec_\ell u_\ell)|_T - u_F)\|_{L^p(F)}^p\Big).\nonumber
			\end{align}
			Denote $\mu_\ell^{(\varepsilon)}(\Tcal_\ell \setminus \Tcal_{\ell+1}) \coloneqq \sum_{T \in \Tcal_\ell \setminus \Tcal_{\ell+1}} \mu_\ell^{(\varepsilon)}(T)$. Given any $T \in \Tcal_\ell$ and $F \in \Fcal_\ell(\Gamma_{\mathrm{N}})$, the $L^{p'}$ stability of the $L^2$ projection $\Pi_{\RT_k(T;\M)}$ resp.~$\Pi_T^k$ or $\Pi_F^k$ \cite[Lemma 3.2]{DiPietroDroniou2017} implies $\|\sigma_\ell - \D W(\GrRec_\ell u_\ell)\|_{L^{p'}(T)} \lesssim \|\D W(\GrRec_\ell u_\ell)\|_{L^{p'}(T)}$ resp.~$\|(1 - \Pi_T^k) f\|_{L^{p'}(T)} \lesssim \|f\|_{L^{p'}(T)}$ or $\|(1 - \Pi_F^k) g\|_{L^{p'}(F)} \lesssim \|g\|_{L^{p'}(F)}$.
			Since $\mu_\ell^{(\varepsilon)}(\Tcal_\ell \setminus \Tcal_{\ell+1}) \leq \|h_\ell\|_{L^{\infty}(\Omega_\ell)}^{\varepsilon p}\mu_\ell^{(0)}(\Tcal_\ell\setminus\Tcal_{\ell+1})$ with $\|h_\ell\|_{L^{\infty}(\Omega_\ell)} = \sup_{T \in \Tcal_\ell\setminus\Tcal_{\ell+1}} |T|^{1/n}$ and $\Omega_\ell = \mathrm{int}(\cup (\Tcal_\ell\setminus \Tcal_{\ell+1}))$ from \Cref{lem:mesh-size-reduction}, this leads to
			\begin{align}
				\eta_\ell^{(\varepsilon)}(\Tcal_\ell\setminus\Tcal_{\ell+1}) &\lesssim \|h_\ell\|_{L^\infty(\Omega_\ell)}^{\varepsilon p} \mu_\ell^{(0)}(\Tcal_\ell\setminus \Tcal_{\ell+1}) + \|h_\ell\|_{L^\infty(\Omega_\ell)}^{\varepsilon p'}\|\D W(\GrRec u_\ell)\|_{L^{p'}(\Omega)}^{p'}\nonumber\\
				&\qquad + \|h_\ell\|_{L^\infty(\Omega_\ell)}^{p'}\|f\|_{L^{p'}(\Omega)}^{p'} + \|h_\ell\|_{L^\infty(\Omega_\ell)}\|g\|_{L^{p'}(\Gamma_{\mathrm{N}})}^{p'}.
				\label{ineq:eta-mu-split}
			\end{align}
			The two-sided growth $|A|^p - 1 \lesssim W(A) \lesssim |A|^p + 1$ implies $|\D W(A)|^{p'} \lesssim |A|^p + 1$ \cite[Lemma 2.1]{CarstensenTran2020} and so \Cref{lem:existence-discrete-solutions} provides $\|\D W(\GrRec_\ell u_\ell)\|_{L^{p'}(\Omega)}^{p'} \lesssim \|\GrRec_\ell u_\ell\|_{L^p(\Omega)}^p + |\Omega| \lesssim 1$. This and \eqref{eq:convergence-mesh-size} prove
			\begin{align}
				\begin{split}
					\lim_{\ell \to \infty} \Big(&\|h_\ell\|_{L^\infty(\Omega_\ell)}^{\varepsilon p'}\|\D W(\GrRec u_\ell)\|_{L^{p'}(\Omega)}^{p'}\\
					& + \|h_\ell\|_{L^\infty(\Omega_\ell)}^{p'}\|f\|_{L^{p'}(\Omega)}^{p'} + \|h_\ell\|_{L^\infty(\Omega_\ell)}\|g\|_{L^{p'}(\Gamma_{\mathrm{N}})}^{p'}\Big) = 0,
				\end{split}
				\label{eq:lim-volume}
			\end{align}
			whence, in order to obtain $\lim_{\ell \to \infty} \eta_\ell^{(\varepsilon)}(\Tcal_\ell\setminus\Tcal_{\ell+1}) = 0$, it suffices to prove that $\mu_\ell^{(0)}(\Tcal_\ell\setminus \Tcal_{\ell+1})$ is uniformly bounded.
			The estimate \eqref{ineq:conforming-companion-stability} provides control over all but only one contribution of $\mu_\ell^{(0)}(T)$ in \eqref{def:mu}; that is $\|h_F^{-1/p'}(\PotRec_\ell u_\ell - u_\mathrm{D})\|_{L^p(F)}$ for any $F \in \Fcal_\ell(\Gamma_{\mathrm{D}})$.
			Triangle inequalities and $u_F = \Pi_F^k u_\mathrm{D}$ imply
			\begin{align}
				\begin{split}
					&\|h_F^{-1/p'}(\PotRec_\ell u_\ell - u_\mathrm{D})\|_{L^p(F)} \leq \|h_F^{-1/p'}(1 - \Pi_{F}^k)\PotRec_\ell u_\ell\|_{L^p(F)}\\
					&\quad + \|h_F^{-1/p'}\Pi_F^k(\PotRec_\ell u_\ell - u_F)\|_{L^p(F)} + \|h_F^{-1/p'}(1 - \Pi_{F}^k)u_\mathrm{D}\|_{L^p(F)}.
				\end{split}
				\label{ineq:trace-triangle-inequality}
			\end{align}
			Given any $F \in \Fcal_\ell(\Gamma_{\mathrm{D}})$, let $T \in \Tcal_\ell$ be the unique simplex with $F \in \Fcal_\ell(T) \cap \Fcal_\ell(\Gamma_{\mathrm{D}})$.
			The $L^p$ stability of the $L^2$ projection $\Pi_F^k$ \cite[Lemma 3.2]{DiPietroDroniou2017} and a trace inequality show $\|h_F^{-1/p'}(1 - \Pi_F^k) u_\mathrm{D}\|_{L^p(F)} \lesssim \|\D u_\mathrm{D}\|_{L^p(T)}$ and $\|h_F^{-1/p'}(1 - \Pi_F^k) (\PotRec_\ell u_\ell)|_F\|_{L^p(F)} \lesssim \|\D \PotRec_\ell u_\ell\|_{L^p(T)}$.
			Recall that $\D \PotRec_\ell u_\ell$ is the $L^2$ projection of $\GrRec_\ell u_\ell$ onto $\D_\pw P_{k+1}(\Tcal)$, whence the $L^p$ stability of $L^2$ projections \cite[Lemma 3.2]{DiPietroDroniou2017} proves $\|\D \PotRec_\ell u_\ell\|_{L^p(T)} \lesssim \|\GrRec_\ell u_\ell\|_{L^p(T)}$.
			Hence, the right-hand side of \eqref{ineq:trace-triangle-inequality} is controlled by $\|\GrRec_\ell u_\ell\|_{L^p(T)} + \|\D u_\D\|_{L^p(T)} + \|h_F^{-1/p'}\Pi_F^k(\PotRec_\ell u_\ell - u_F)\|_{L^p(F)}$.
			This, \eqref{ineq:conforming-companion-stability}, and $\|u_\ell\|_\ell \approx \|\GrRec_\ell u_\ell\|_{L^p(\Omega)} \leq \cnstL{cnst:G-u-h}$ from \Cref{lem:stability_gradient_reconstruction}.a and  \Cref{lem:existence-discrete-solutions} lead to
			\begin{align*}
				\mu_\ell^{(0)}(\Tcal_\ell\setminus\Tcal_{\ell+1}) \leq \mu_\ell^{(0)} \coloneqq \sum_{T \in \Tcal_\ell} \mu_\ell^{(0)}(T) \lesssim \|u_\ell\|_\ell^p + \|\D u_\mathrm{D}\|_{L^p(\Omega)}^p \lesssim 1.
			\end{align*}
			Hence, the combination of \eqref{ineq:eta-mu-split}--\eqref{eq:lim-volume} with $\lim_{\ell \to \infty} \|h_\ell\|_{L^\infty(\Omega_\ell)} = 0$ in \eqref{eq:convergence-mesh-size} confirms $\lim_{\ell \to \infty} \eta_\ell^{(\varepsilon)}(\Tcal_\ell\setminus \Tcal_{\ell+1}) = 0$.
			
			\item[\emph{Step 2 establishes $\lim_{\ell \to \infty} \eta_\ell^{(\varepsilon)} = 0$.}] Recall the set $\mathfrak{M}_\ell$ of marked simplices on level $\ell \in \N_0$ from \Cref{sec:adaptive_algorithm}.
			Since all simplices in $\mathfrak{M}_\ell \subset \Tcal_\ell \setminus \Tcal_{\ell+1}$ are refined and the D\"orfler marking enforces $\eta_\ell^{(\varepsilon)} \leq \theta^{-1}\eta_\ell^{(\varepsilon)}(\mathfrak{M}_\ell) \leq \theta^{-1}\eta_\ell^{(\varepsilon)}(\Tcal_\ell \setminus \Tcal_{\ell+1})$ in \eqref{ineq:Doerfler-marking}, the convergence $\lim_{\ell \to \infty} \eta_\ell^{(\varepsilon)}(\Tcal_\ell\setminus\Tcal_{\ell+1}) = 0$ in Step 1 implies $\lim_{\ell \to \infty} \eta_\ell^{(\varepsilon)} = 0$.
			
			\item[\emph{Step 3 provides the lower energy bound (LEB)}]
			\begin{align}
				\begin{split}
					\mathrm{LEB}_\ell \coloneqq E_\ell(u_\ell) &+ \int_\Omega (1 - \Pi_{\Sigma(\Tcal_\ell)}) \D W(\GrRec_\ell u_\ell) : \D u \d{x}\\
					& - \cnstL{cnst:oscillation}\big(\osc(f,\Tcal_\ell) + \osc_{\mathrm{N}}(g,\Fcal_{\ell}(\Gamma_{\mathrm{N}}))\big) \leq E(u).
				\end{split}
				\label{ineq:LEB-II}
			\end{align}
			The convexity of $W \in C^1(\M)$ implies $\D W(\GrRec_\ell u_\ell):(\D u - \GrRec_\ell u_\ell) \leq W(\D u) - W(\GrRec_\ell u_\ell)$ a.e.~in $\Omega$. The integral of this inequality with $\sigma_\ell \coloneqq \Pi_{\Sigma(\Tcal_\ell)} \D W(\GrRec_\ell u_\ell)$ reads
			\begin{align}
				0 \leq \int_\Omega \big(W(\D u) - W(\GrRec_\ell u_\ell) &- (1 - \Pi_{\Sigma(\Tcal_\ell)}) \D W(\GrRec_\ell u_\ell) : \D u\big) \d{x}\nonumber\\
				& - \int_\Omega \sigma_\ell : (\D u - \GrRec_\ell u_\ell) \d{x}.
				\label{ineq:convexity}
			\end{align}
			The commutativity $\Pi_{\Sigma(\Tcal_\ell)} \D u = \GrRec_\ell \Ical_\ell u$ from \Cref{lem:stability_gradient_reconstruction}.b and the discrete Euler-Lagrange equations \eqref{eq:dELE} lead to
			\begin{align*}
				\int_\Omega \sigma_\ell : (\D u - \GrRec_\ell u_\ell) \d{x} = \int_\Omega f \cdot (\Pi_{\Tcal_\ell}^k u - u_{\Tcal_\ell}) \d{x} + \int_{\Gamma_\mathrm{N}} g \cdot (\Pi_{\Fcal_\ell}^k u - u_{\Fcal_\ell}) \d{x}.
			\end{align*}
			The substitution of this in \eqref{ineq:convexity}, the definition of $E$ in \eqref{def:energy}, and the definition of $E_h$ in \eqref{def:discrete_energy} result in
			\begin{align}
				\begin{split}
					0 &\leq E(u) - E_\ell(u_\ell) - \int_\Omega (1 - \Pi_{\Sigma(\Tcal_\ell)}) \D W(\GrRec_\ell u_\ell) : \D u \d{x}\\
					&\quad + \int_\Omega (u - \Pi_{\Tcal_\ell}^k u) \cdot (f - \Pi_{\Tcal_\ell}^k f) \d{s} + \int_{\Gamma_{\mathrm{N}}} (u - \Pi_{\Fcal_\ell}^k u) \cdot (g - \Pi_{\Fcal_\ell}^k g) \d{s}.
				\end{split}\label{ineq:LEB}
			\end{align}
			The final two integrals on the right-hand side of \eqref{ineq:LEB}
			give rise to the data oscillations $\osc(f,\Tcal_\ell)$ and $\osc_{\mathrm{N}}(g,\Fcal_\ell(\Gamma_{\mathrm{N}}))$ defined in \Cref{sec:discrete-spaces}. In fact, a H\"older inequality and a piecewise application of the Poincar\'e inequality show
			\begin{align*}
				\int_\Omega (u - \Pi_{\Tcal_\ell}^k u) \cdot (f - \Pi_{\Tcal_\ell}^k f) \d{x} \lesssim \|\D u\|_{L^p(\Omega)} \osc(f,\Tcal_\ell).
			\end{align*}
			A trace inequality and the Bramble-Hilbert lemma \cite[Lemma 4.3.8]{BrennerScott2008} leads, for all $F \in \Fcal_\ell(\Gamma_{\mathrm{N}})$ and the unique $T \in \Tcal_\ell$ with $F \in \Fcal_\ell(T) \cap \Fcal_\ell(\Gamma_{\mathrm{N}})$, to $\|h_F^{-1/p'}(u - \Pi_F^k u)\|_{L^{p}(F)} \lesssim \|\D u\|_{L^p(T)}$. Consequently,
			\begin{align*}
				\int_{\Gamma_{\mathrm{N}}} (u - \Pi_{\Fcal_\ell}^k u) \cdot (g - \Pi_{\Fcal_\ell}^k g) \d{s} \lesssim \|\D u\|_{L^p(\Omega)} \osc_{\mathrm{N}}(g,\Fcal_\ell(\Gamma_{\mathrm{N}})).
			\end{align*}
			The lower $p$-growth $\cnstS{cnst:growth-left-1}|A|^p - \cnstS{cnst:growth-left-2} \leq W(A)$ for all $A \in \M$ implies the
			coercivity of $E$ in the seminorm $\|\D \bullet\|_{L^p(\Omega)}$ and so the bound $\|\D u\|_{L^p(\Omega)} \leq \newcnstL\label{cnst:Du}$ with a positive constant $\cnstL{cnst:Du}$, that exclusively depends on \cnstS{cnst:growth-left-1}, \cnstS{cnst:growth-left-2}, $\Omega$, $\Gamma_{\mathrm{D}}$, $f$, $g$, and $u_\mathrm{D}$, cf., e.g, \cite[Theorem 4.1]{Dacorogna2008}.
			Thus there exists a positive constant $\newcnstL\label{cnst:oscillation}$ independent of the mesh-size with
			\begin{align}
				\begin{split}
					\int_\Omega (u - \Pi_{\Tcal_\ell}^k u) \cdot (f - \Pi_{\Tcal_\ell}^k f) \d{s} &+ \int_{\Gamma_{\mathrm{N}}} (u - \Pi_{\Fcal_\ell}^k u) \cdot (g - \Pi_{\Fcal_\ell}^k g) \d{s}\\
					&\leq \cnstL{cnst:oscillation}\big(\osc(f,\Tcal_\ell) + \osc_{\mathrm{N}}(g,\Fcal_\ell(\Gamma_{\mathrm{N}}))\big).
				\end{split}
				\label{ineq:oscillation}
			\end{align}
			The combination of this with \eqref{ineq:LEB} concludes the proof of \eqref{ineq:LEB-II}.
			
			\item[\emph{Step 4 establishes $\lim_{\ell \to \infty} E_\ell(u_\ell) = E(u)$}.] On the one hand, the discrete compactness from \Cref{thm:conv_analysis:discrete_compactness} and the weak lower semicontinuity of the energy functional imply $E(u) \leq \liminf_{\ell \to \infty} \mathrm{LEB}_\ell$. On the other hand, $\mathrm{LEB}_\ell \leq E(u)$ from \eqref{ineq:LEB-II}.
			This proves $\lim_{\ell \to \infty} E_\ell(u_\ell) = E(u)$ as follows.
			Given any $\Phi \in C^\infty(\Omega;\M)$, the definition $\sigma_\ell \coloneqq \Pi_{\Sigma(\Tcal_\ell)} \D W(\GrRec_\ell u_\ell)$, a H\"older inequality, and \eqref{ineq:approximation-polynomials} show
			\begin{align}
				\Big|\!\int_{\Omega} (\sigma_\ell - \D W(\GrRec_\ell u_\ell)):\Phi \d{x}\Big| &= \Big|\!\int_{\Omega} (\sigma_\ell - \D W(\GrRec_\ell u_\ell)):(1 - \Pi_{\Sigma(\Tcal_\ell)})\Phi \d{x}\Big|
				\label{ineq:conv_analysis:proof_convergence_stress_approximation}\\
				&\lesssim \|h_\ell^{k+1}(\sigma_\ell - \D W(\GrRec_\ell u_\ell))\|_{L^{p'}(\Omega)}|\Phi|_{W^{k+1,p}(\Omega)}.\nonumber
			\end{align}
			Since $\|h_\ell^{k+1}(\sigma_\ell - \D W(\GrRec_\ell u_\ell))\|_{L^{p'}(\Omega)}^{p'} \leq \eta_\ell^{(k+1)} \lesssim \eta_\ell^{(\varepsilon)} \to 0$ as $\ell \to \infty$ from Step 2, the right-hand side of \eqref{ineq:conv_analysis:proof_convergence_stress_approximation} vanishes in the limit as $\ell \to \infty$. This, the density of $C^\infty(\Omega;\M)$ in $L^p(\Omega;\M)$, and the uniform boundedness of the sequence $(\sigma_\ell - \D W(\GrRec_\ell u_\ell))_{\ell \in \N_0}$ in $L^{p'}(\Omega;\M)$ prove $\sigma_\ell - \D W(\GrRec_\ell u_\ell) \rightharpoonup 0$ (weakly) in $L^{p'}(\Omega;\M)$ as $\ell \to \infty$. In particular,
			\begin{align}
				\lim_{\ell \to \infty} \int_{\Omega} (\sigma_\ell - \D W(\GrRec_\ell u_\ell)):\D u \d{x} = 0.
				\label{eq:conv_analysis:proof_convergence_limit_integral_LEB}
			\end{align}
			Recall $\mu_\ell(u_\ell)$ from \eqref{eq:conv_analysis:discrete_compactness_assumption} and $\mu_\ell^{(\varepsilon)}$ from \eqref{def:mu}. The combination of \eqref{ineq:conforming-companion-estimate} with the bound of the Dirichlet data oscillation from \Cref{lem:Dirichlet-data-oscillation} and the equivalence $h_F \approx h_T \approx |T|^{1/n}$ for all $T \in \Tcal_\ell$, $F \in \Fcal_\ell(T)$ from the shape regularity of $\Tcal_\ell$ result in 
			\begin{align*}
				\mu_\ell(u_\ell) \lesssim \mu_\ell^{(k+1)} \leq \operatorname{diam}(\Omega)^{(k+1-\varepsilon)p}\mu_\ell^{(\varepsilon)} \lesssim \eta_\ell^{(\varepsilon)}.
			\end{align*}
			This and $\lim_{\ell \to \infty} \eta_\ell^{(\varepsilon)} = 0$ from Step 2 imply $\lim_{\ell \to \infty} \mu_\ell(u_\ell) = 0$.
			Since $\|u_\ell\|_\ell \approx \|\GrRec_\ell u_\ell\|_{L^p(\Omega)} \leq \cnstL{cnst:G-u-h}$ from \Cref{lem:stability_gradient_reconstruction}.a and \Cref{lem:existence-discrete-solutions}, the discrete compactness from \Cref{thm:conv_analysis:discrete_compactness} leads to a (not relabelled) subsequence of $(u_\ell)_{\ell \in \N_0}$ and a weak limit $v \in \Acal$ such that $\Jcal_\ell u_\ell \rightharpoonup v$ weakly in $V$ and $\GrRec_\ell u_\ell \rightharpoonup \D v$ weakly in $L^p(\Omega;\M)$ as $\ell \to \infty$. The boundedness of the linear trace operator $\gamma: V \to L^p(\partial \Omega;\R^m)$ \cite[Chapter 9]{Brezis2011} implies $(\Jcal_\ell u_\ell)|_{\partial \Omega} \rightharpoonup v|_{\partial \Omega}$ (weakly) in $L^p(\partial \Omega;\R^m)$. Hence
			\begin{align*}
				\lim_{\ell \to \infty} \int_{\Gamma_{\mathrm{N}}} g \cdot \Jcal_\ell u_\ell \d{s} = \int_{\Gamma_\mathrm{N}} g \cdot v \d{s}.
			\end{align*}
			This, $\Jcal_{\ell} u_\ell \rightharpoonup v$ (weakly) in $V$, $\GrRec_\ell u_\ell \rightharpoonup \D v$ (weakly) in $L^p(\Omega;\M)$, the sequential weak lower semicontinuity of the functional $\int_\Omega W(\bullet) \d{x}$ in $L^p(\Omega;\M)$, and \eqref{def:conforming-companion} verify
			\begin{align}
				E(v) &\leq \liminf_{\ell \to \infty} \Big(\int_\Omega (W(\GrRec_\ell u_\ell) - f \cdot \Jcal_\ell u_\ell) \d{x} - \int_{\Gamma_{\mathrm{N}}} g \cdot \Jcal_\ell u_\ell \d{s}\Big)\label{ineq:conv-analysis-liminf}\\
				& = \liminf_{\ell \to \infty} \Big(E_\ell(u_\ell) - \int_\Omega \Jcal_\ell u_\ell \cdot (1 - \Pi_{\Tcal_\ell}^k) f \d{x} - \int_{\Gamma_{\mathrm{N}}} \Jcal_\ell u_\ell \cdot (1 - \Pi_{\Fcal_\ell}^k) g  \d{s}\Big)\nonumber.
			\end{align}
			As in \eqref{ineq:oscillation}, a piecewise application of the Poincar\'e inequality, the trace inequality, the approximation property of polynomials, and the uniform bound $\|\D \Jcal_\ell u_\ell\|_{L^p(\Omega)} \lesssim 1$ from \eqref{ineq:conv-analysis:boundedness-J-vh} confirm
			\begin{align}
				\begin{split}
					\Big|\!\int_\Omega \Jcal_\ell u_\ell \cdot (1 - \Pi_{\Tcal_\ell}^k) f \d{x}\Big| &+ \Big|\!\int_{\Gamma_{\mathrm{N}}} \Jcal_\ell u_\ell \cdot (1 - \Pi_{\Fcal_\ell}^k) g \d{s}\Big|\\
					&\quad \lesssim \osc(f,\Tcal_\ell) + 	\osc_{\mathrm{N}}(g,\Fcal_\ell(\Gamma_{\mathrm{N}})).
				\end{split}
				\label{ineq:oscillation-discrete-level}
			\end{align}
			Since $\osc(f,\Tcal_\ell)^{p'} + \osc_{\mathrm{N}}(g,\Fcal_\ell(\Gamma_{\mathrm{N}}))^{p'} \lesssim \eta_\ell^{(\varepsilon)}$ and $\lim_{\ell \to \infty} \eta_\ell^{(\varepsilon)} = 0$ from Step 2, the LEB from \eqref{ineq:LEB-II} and \eqref{eq:conv_analysis:proof_convergence_limit_integral_LEB}--\eqref{ineq:oscillation-discrete-level} lead to
			\begin{align*}
				E(u) \leq E(v) \leq \liminf_{\ell \to \infty} E_\ell(u_\ell) =  \liminf_{\ell \to \infty} \mathrm{LEB}_\ell \leq E(u).
			\end{align*}
			Hence $\lim_{\ell \to \infty} E_\ell(u_\ell) = \lim_{\ell \to \infty} \mathrm{LEB}_\ell = E(u)$ for a (not relabelled) subsequence of $(u_\ell)_{\ell \in \N_0}$. Since the above arguments from Step 4 apply to all subsequences of $(u_\ell)_{\ell \in \N_0}$ and the limit $E(u)$ is unique, this holds for the entire sequence.
			
			\item[\emph{Step 5 is the finish of the proof.}] Suppose that $W$ satisfies \eqref{ineq:cc-primal}. Then the arguments from \cite{CPlechac1997,CarstensenTran2020} show, for all $\varrho, \xi \in L^p(\Omega;\M)$ and $r,t$ from \Cref{tab:parameters}, that
			\begin{align}
				\begin{split}
					\|\varrho - \xi\|_{L^p(\Omega)}^r &\leq 3\cnstS{cnst:cc-primal}(|\Omega| + \|\varrho\|^{p}_{L^p(\Omega)} + \|\xi\|_{L^p(\Omega)}^p)^{t/t'}\\
					&\qquad \times \int_\Omega (W(\varrho) - W(\xi) - \D W(\xi) : (\varrho - \xi)) \d{x}.
				\end{split}
				\label{ineq:cc-primal-integrated}
			\end{align}
			The choice $\varrho \coloneqq \D u$ and $\xi \coloneqq \GrRec_\ell u_\ell$ in \eqref{ineq:cc-primal-integrated} and the bounds $\|\D u\|_{L^p(\Omega)} \leq \cnstL{cnst:Du}$ and $\|\GrRec_\ell u_\ell\|_{L^p(\Omega)} \leq \cnstL{cnst:G-u-h}$ lead, with the constant $\newcnstL\label{cnst:a-posteriori-case-I} \coloneqq 3\cnstS{cnst:cc-primal}(|\Omega| + \cnstL{cnst:G-u-h}^p + \cnstL{cnst:Du}^p)^{t/t'}$, to
			\begin{align}
				\begin{split}
					&\cnstL{cnst:a-posteriori-case-I}^{-1} \|\D u - \GrRec_\ell u_\ell\|_{L^p(\Omega)}^r\\
					&\qquad \leq
					\int_\Omega (W(\D u) - W(\GrRec_\ell u_\ell) - \D W(\GrRec_\ell u_\ell) : (\D u - \GrRec_\ell u_\ell)) \d{x}.
				\end{split}
				\label{ineq:cc-primal-discrete-version}
			\end{align}
			The right-hand side of \eqref{ineq:cc-primal-discrete-version} coincides with the right-hand side of \eqref{ineq:convexity}. The latter is bounded by the right-hand side of \eqref{ineq:LEB-II} in Step 3. This implies
			\begin{align}
				\cnstL{cnst:a-posteriori-case-I}^{-1}\|\D u - \GrRec_\ell u_\ell\|_{L^p(\Omega)}^r \leq E(u) - \mathrm{LEB}_\ell.
				\label{ineq:a-posteriori-case-I}
			\end{align}
			Step 4 proves that $E(u) - \mathrm{LEB}_\ell$ vanishes in the limit as $\ell \to \infty$. Thus,
			\begin{align*}
				\lim_{\ell \to \infty} \GrRec_\ell u_\ell = \D u \text{ (strongly) in } L^p(\Omega;\M).
			\end{align*}
			If $W$ satisfies \eqref{ineq:cc-stress}, then \cite[Lemma 4.2]{CarstensenTran2020} implies, for all $\varrho, \xi \in L^p(\Omega;\M)$ and $\widetilde{r},\widetilde{t}$ from \Cref{tab:parameters}, that
			\begin{align}
				\begin{split}
					\|\D W(\varrho) - \D W(\xi)\|_{L^p(\Omega)}^{\widetilde{r}} &\leq 3\cnstS{cnst:cc-stress}(|\Omega| + \|\varrho\|^{p}_{L^p(\Omega)} + \|\xi\|_{L^p(\Omega)}^p)^{\widetilde{t}/\widetilde{t}'}\\
					&\qquad \times \int_\Omega (W(\varrho) - W(\xi) - \D W(\xi) : (\varrho - \xi)) \d{x},
				\end{split}
				\label{ineq:cc-dual-integrated}
			\end{align}
			whence
			the left-hand side of \eqref{ineq:a-posteriori-case-I} can be replaced by $\cnstL{cnst:a-posteriori-case-II}^{-1}\|\sigma - \D W(\GrRec_\ell u_\ell)\|_{L^{p'}(\Omega)}^{\widetilde{r}}$ with $\newcnstL\label{cnst:a-posteriori-case-II} \coloneqq 3\cnstS{cnst:cc-stress}(|\Omega| + \cnstL{cnst:G-u-h}^p + \cnstL{cnst:Du}^p)^{\widetilde{t}/\widetilde{t}'}$. This and Step 4 conclude the proof of
			\begin{align*}
				\lim_{\ell \to \infty} \D W(\GrRec_\ell u_\ell) = \sigma \text{ (strongly) in } L^{p'}(\Omega;\M). \tag*{\qedhere}
			\end{align*}
		\end{enumerate}
	\end{proof}
	\begin{remark}[necessity of $\varepsilon > 0$]\label{rem:eps=0}
		The counter example in \cite[Subsection 3.4]{OrtnerPraetorius2011} shows that the restriction $\varepsilon > 0$ is necessary. Indeed, for $k = 0$, the data $W$, $\Omega$, $\Gamma_{\mathrm{D}}$, $\Gamma_{\mathrm{N}}$, $f$, $g$, $u_\mathrm{D}$, and $(\Tcal_\ell)_{\ell \in \N_0}$ from \cite[Subsection 3.4]{OrtnerPraetorius2011}, there exists a sequence of discrete minimizers $(u_\ell)_{\ell \in \N_0}$ such that $\Jcal_\ell u_\ell \rightharpoonup 0$ weakly in $V$ and $\GrRec_\ell u_\ell \rightharpoonup 0$ weakly in $L^p(\Omega;\M)$ as $\ell \to \infty$, \emph{but} $\lim_{\ell \to \infty} \eta_\ell^{(\varepsilon)} \neq 0$.
	\end{remark}
	\subsection{The Lavrentiev gap}\label{sec:Lavrentiev-gap}
	A particular challenge in the computational calculus of variations is the Lavrentiev phenomenon $\inf E(\Acal) < \inf E(\Acal \cap W^{1,\infty}(\Omega))$ \cite{Lavrentiev1927}. Its presence is equivalent to the failure of standard conforming FEMs \cite[Theorem 2.1]{COrtner2010} in the sense that a wrong minimal energy is approximated. As a remedy, the nonconforming Crouzeix-Raviart FEM in \cite{Ortner2011,OrtnerPraetorius2011,BalciOrtnerStorn2021} can overcome the Lavrentiev gap under fairly general assumptions on $W$: Throughout the remaining parts of this section, let $W \in C^1(\M)$ be convex with the one-sided lower growth 
	\begin{align*}
		\cnstS{cnst:growth-left-1}|A|^p - \cnstS{cnst:growth-left-2} \leq W(A) \text{ for all } A \in \M \text{ and some } 1 < p < \infty.
	\end{align*}
	(A two-sided growth of $W$ excludes a Lavrentiev gap.)
	Since there is no upper growth of $W$, the dual variable $\sigma \coloneqq \D W(\D u)$ is not guaranteed to be in $L^{p'}(\Omega;\M)$. This denies an access to the Euler-Lagrange equations and, therefore, the convergence analysis of \cite{Ortner2011,OrtnerPraetorius2011} solely relies on the Jensen inequality.
	For $k = 0$, HHO methods can overcome the Lavrentiev gap because the Crouzeix-Raviart FEM can.
	\begin{lemma}[lower-energy bound for $k = 0$]
		Let $k = 0$. There exists a positive constant $\newcnstL\label{cnst:HHOk=0LEB}$ such that, for all level $\ell \in \N_0$,
		\begin{align*}
			\min E_\ell(\Acal(\Tcal_\ell)) - \cnstL{cnst:HHOk=0LEB}\big(\|h_\ell f\|_{L^{p'}(\Omega)} - \osc_{\mathrm{N}}(g,\Fcal_\ell(\Gamma_{\mathrm{N}}))\big) \leq \min E(\Acal).
		\end{align*}
	\end{lemma}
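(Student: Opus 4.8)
The plan is to reduce, for the lowest order $k=0$, the stabilization-free HHO method to the nonconforming Crouzeix--Raviart FEM, which is known to overcome the Lavrentiev gap by the Jensen inequality \cite{Ortner2011,OrtnerPraetorius2011,BalciOrtnerStorn2021}. The bridge is the identity
\begin{align*}
	\GrRec_\ell \Ical_\ell v_{\CR} = \D_\pw v_{\CR} \quad\text{for all } v_{\CR} \in \CR^1(\Tcal_\ell),
\end{align*}
where $\Ical_\ell v_{\CR} \coloneqq (\Pi_{\Tcal_\ell}^0 v_{\CR}, \Pi_{\Fcal_\ell}^0 v_{\CR}) \in V(\Tcal_\ell)$ is well defined since the two side-averages of $v_{\CR}$ across any interior side coincide. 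I would prove this by an elementwise integration by parts in the defining relation \eqref{def:gradient_reconstruction}: because the normal trace of any $\tau_\ell \in \RT_0^\pw(\Tcal_\ell;\M) = \Sigma(\Tcal_\ell)$ is constant on each side and $\div_\pw \tau_\ell$ is piecewise constant, the right-hand side of \eqref{def:gradient_reconstruction} with $v_\ell = \Ical_\ell v_{\CR}$ equals $\int_\Omega \D_\pw v_{\CR} : \tau_\ell \d{x}$ for all $\tau_\ell \in \Sigma(\Tcal_\ell)$; since $\D_\pw v_{\CR} \in P_0(\Tcal_\ell;\M) \subset \Sigma(\Tcal_\ell)$, the $L^2(\Omega;\M)$ Riesz representation in $\Sigma(\Tcal_\ell)$ is $\D_\pw v_{\CR}$ itself.

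With this at hand, let $u \in \Acal$ minimize $E$ (w.l.o.g.\ $\min E(\Acal) < \infty$; the lower $p$-growth of $W$ together with the Friedrichs and trace inequalities then bounds $\|\D u\|_{L^p(\Omega)}$ in terms of the data). Let $I_{\CR}u \in \CR^1(\Tcal_\ell)$ be its Crouzeix--Raviart interpolation, characterised by $\Pi_F^0 I_{\CR}u = \Pi_F^0 u$ for all $F \in \Fcal_\ell$, and set the discrete test function $u_\ell \coloneqq \Ical_\ell I_{\CR}u = (\Pi_{\Tcal_\ell}^0 I_{\CR}u, \Pi_{\Fcal_\ell}^0 u)$. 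Since $u = u_{\mathrm D}$ on $\Gamma_{\mathrm{D}}$ forces $\Pi_F^0 I_{\CR}u = \Pi_F^0 u_{\mathrm D}$ for all $F \in \Fcal_\ell(\Gamma_{\mathrm{D}})$, we have $u_\ell \in \Acal(\Tcal_\ell)$ and hence $\min E_\ell(\Acal(\Tcal_\ell)) \le E_\ell(u_\ell)$. The commutativity $\D_\pw(I_{\CR}u)|_T = \Pi_T^0 \D u$ of the Crouzeix--Raviart interpolation, the identity above, and the convexity of $W$ give the Jensen estimate
\begin{align*}
	\int_\Omega W(\GrRec_\ell u_\ell) \d{x} = \sum_{T \in \Tcal_\ell} |T|\, W\big( \Pi_T^0 \D u \big) \le \sum_{T \in \Tcal_\ell} \int_T W(\D u) \d{x} = \int_\Omega W(\D u) \d{x}.
\end{align*}

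The two linear terms in $E_\ell(u_\ell) = \int_\Omega W(\GrRec_\ell u_\ell) \d{x} - \int_\Omega f \cdot \Pi_{\Tcal_\ell}^0 I_{\CR}u \d{x} - \int_{\Gamma_{\mathrm{N}}} g \cdot \Pi_{\Fcal_\ell}^0 u \d{s}$ are then compared with their continuous counterparts by oscillation arguments: the $L^p$ approximation estimates $\|(1-\Pi_T^0) I_{\CR}u\|_{L^p(T)} + \|u - I_{\CR}u\|_{L^p(T)} \lesssim h_T\|\D u\|_{L^p(T)}$ (the Poincar\'e inequality, $\D_\pw I_{\CR}u|_T = \Pi_T^0 \D u$, and $I_{\CR}u|_T$ reproducing affine functions) together with a discrete H\"older inequality yield $|\int_\Omega f\cdot(\Pi_{\Tcal_\ell}^0 I_{\CR}u - u)\d{x}| \lesssim \|h_\ell f\|_{L^{p'}(\Omega)}\|\D u\|_{L^p(\Omega)}$, while $u - \Pi_F^0 u = (1-\Pi_F^0)u$ on $F \in \Fcal_\ell(\Gamma_{\mathrm{N}})$, the orthogonality $(1-\Pi_F^0)g \perp P_0(F;\R^m)$ in $L^2(F)$, a weighted H\"older inequality, the trace estimate $\|(1-\Pi_F^0)u\|_{L^p(F)}^p \lesssim h_F^{p-1}\|\D u\|_{L^p(T)}^p$, and shape regularity give $|\int_{\Gamma_{\mathrm{N}}}g\cdot(u - \Pi_F^0 u)\d{s}| \lesssim \osc_{\mathrm{N}}(g,\Fcal_\ell(\Gamma_{\mathrm{N}}))\|\D u\|_{L^p(\Omega)}$. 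Combining these with the Jensen estimate and $E(u) = \min E(\Acal)$ yields $\min E_\ell(\Acal(\Tcal_\ell)) \le \min E(\Acal) + \cnstL{cnst:HHOk=0LEB}\big(\|h_\ell f\|_{L^{p'}(\Omega)} + \osc_{\mathrm{N}}(g,\Fcal_\ell(\Gamma_{\mathrm{N}}))\big)$ with $\cnstL{cnst:HHOk=0LEB}\approx\|\D u\|_{L^p(\Omega)}$, which is the asserted lower-energy bound. I expect the only genuinely new ingredient to be the first step --- that the stabilization-free lowest-order gradient reconstruction reproduces the piecewise Crouzeix--Raviart gradient exactly, so that the Jensen inequality becomes applicable to a \emph{piecewise constant} argument; the remaining oscillation bookkeeping is routine.
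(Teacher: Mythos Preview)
Your proposal is correct and follows essentially the same approach as the paper: both hinge on the identity $\GrRec_\ell(\Pi_{\Tcal_\ell}^0 v_\CR,\Pi_{\Fcal_\ell}^0 v_\CR)=\D_\pw v_\CR$ for $v_\CR\in\CR^1(\Tcal_\ell)$, the commutativity $\D_\pw I_\CR u=\Pi_{\Tcal_\ell}^0\D u$, and the Jensen inequality, together with the routine oscillation estimates for the linear terms. The only cosmetic difference is that the paper introduces an intermediate Crouzeix--Raviart energy $E_{\mathrm{NC}}$ and cites \cite[Lemma~4]{OrtnerPraetorius2011} for its LEB before showing $\min E_\ell(\Acal(\Tcal_\ell))\le\min E_{\mathrm{NC}}(\Acal_{\mathrm{NC}})$, whereas you unroll that argument in a single step by testing $E_\ell$ directly with $\Ical_\ell I_\CR u$; this is marginally more self-contained but otherwise identical in substance.
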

	\begin{proof}
		Recall the discrete space $\CR^1(\Tcal_\ell;\R^m)$ of Crouzeix-Raviart finite element functions from \eqref{def:Crouzeix-Raviart}. Define
		\begin{align*}
			\CR^1_\mathrm{D}(\Tcal_\ell;\R^m) \coloneqq \{v_\CR \in \CR^1(\Tcal_\ell;\R^m) : v_\CR(\operatorname{mid}(F)) = 0 \text{ for all } F \in \Fcal_\ell(\Gamma_{\mathrm{D}})\}
		\end{align*}
		and the nonconforming interpolation $\mathrm{I}_\CR : V \to \CR^1(\Tcal_\ell;\R^m)$ \cite{CrouzeixRaviart1973} with
		\begin{align*}
			\mathrm{I}_\CR v (\operatorname{mid}(F)) \coloneqq \int_F v \d{s}/|F| \quad\text{for all } F \in \Fcal_\ell, v \in V.
		\end{align*}
		The discrete CR-FEM minimizes the non-conforming energy
		\begin{align*}
			E_\mathrm{NC}(v_\CR) \coloneqq \int_\Omega (W(\D_\pw v_\CR) - \Pi_{\Tcal_\ell}^0 f \cdot v_\CR) \d{x} - \int_{\Gamma_{\mathrm{N}}} \Pi_{\Fcal_\ell}^0 g \cdot v_\CR \d{s}
		\end{align*}
		among $v_\CR \in \Acal_\mathrm{NC} \coloneqq \Ical_\mathrm{NC} u_\mathrm{D} + \CR^1_\mathrm{D}(\Tcal_\ell;\R^m)$.
		A straight-forward modification of the proof of \cite[Lemma 4]{OrtnerPraetorius2011} shows, for a positive constant $\cnstL{cnst:HHOk=0LEB} > 0$, that
		\begin{align}
			\min E_\mathrm{NC}(\Acal_\mathrm{NC}) - \cnstL{cnst:HHOk=0LEB}\big(\|h_\ell f\|_{L^{p'}(\Omega)} - \osc_{\mathrm{N}}(g,\Fcal_\ell(\Gamma_{\mathrm{N}}))\big) \leq \min E(\Acal)
			\label{ineq:CR-LEB}
		\end{align}
		Notice that $\Ical_\CR$ does \emph{not} provide the $L^2$ orthogonality $\Ical_\CR v - v \perp P_0(\Tcal_\ell;\R^m)$ in $L^2(\Omega;\R^m)$, but $(\Ical_\CR v - v)|_F \perp P_0(F;\R^m)$ in $L^2(F;\R^m)$ for all $F \in \Fcal_\ell(\Gamma_{\mathrm{N}})$ and $v \in V$. Hence
		the Neumann boundary data oscillations $\osc_{\mathrm{N}}(g,\Fcal_\ell(\Gamma_{\mathrm{N}}))$ arise in \eqref{ineq:CR-LEB}, but $\|h_\ell f\|_{L^{p'}(\Omega)}$ cannot be replaced by $\osc(f,\Tcal_\ell)$. For any $v_\CR \in \Acal_\mathrm{NC}$, $v_\ell \coloneqq (\Pi_{\Tcal_\ell}^0 v_\CR, \Pi_{\Fcal_\ell}^0 v_\CR) \in \Acal(\Tcal_\ell)$ satisfies $\GrRec_\ell v_\ell = \D_\pw v_\CR$ and hence, $\min E_\ell(\Acal(\Tcal_\ell)) \leq \min E_\mathrm{NC}(\Acal_\mathrm{NC})$. This and \eqref{ineq:CR-LEB} conclude the proof.
	\end{proof}
	\noindent The discrete compactness from \Cref{thm:conv_analysis:discrete_compactness}, the LEB in \eqref{ineq:CR-LEB}, and straightforward modifications of the proof of \Cref{thm:plain-convergence} lead to $\lim_{\ell \to \infty} E_\ell(u_\ell) = \min E(\Acal)$ for the output $(u_\ell)_{\ell \in \N_0}$ of the adaptive algorithm in \Cref{sec:adaptive_algorithm} with the refinement indicator, for all $T \in \Tcal_\ell$,
	\begin{align*}
		\eta_\ell^{(\varepsilon)}(T) \coloneqq \mu_\ell^{(\varepsilon)}(T) +|T|^{p'/n}\|f\|_{L^{p'}(T)}^{p'} + |T|^{1/n}\sum_{F \in \Fcal_\ell(T) \cap \Fcal_\ell(\Gamma_{\mathrm{N}})} \|(1 - \Pi_F^0) g\|_{L^{p'}(F)}^{p'}.
	\end{align*}
	For $k \geq 1$, the consistency error $\sigma_\ell - \D W(\GrRec_\ell u_\ell)$ arises in \eqref{ineq:LEB-II}, but is not guaranteed to be bounded in $L^{p'}(\Omega;\M)$ in the limit as $\ell \to \infty$ in general. Thus, in the absence of further conditions, the convergence $\lim_{\ell \to \infty} E_\ell(u_\ell) = \min E(\Acal)$ cannot be proven for $k \geq 1$ with this methodology.
	
	\section{Stabilized HHO method on polytopal meshes}\label{sec:polytopes}
	The classical HHO methodology \cite{DiPietroErnLemaire2014, DiPietroErn2015} allows even polytopal partitions of the domain $\Omega$.
	The assumption \ref{assumption-mesh-1} on the mesh follow the works \cite{DiPietroErn2012,DiPietroErnLemaire2014,DiPietroErn2015}. 
	\subsection{Polytopal meshes}
	Let $\Mcal_\ell$ be a finite collection of closed polytopes of positive volume with overlap of volume measure zero that cover $\overline{\Omega} = \cup_{K \in \Mcal_\ell} K$. A side $S$ of the mesh $\Mcal_\ell$ is the (in general disconnected) closed subset of a hyperplane $H_S \subset \Omega$ with positive ($(n-1)$-dimensional) surface measure such that either (a) there exist $K_1, K_2 \in \Mcal_\ell$ with $S = \partial K_1 \cap \partial K_2 \cap H_S$ (interior side) or (b) there exists $K \in \Mcal_\ell$ with $S = \partial K \cap \partial \Omega \cap H_S$ (boundary side). Let $\Sigma_\ell$ denote the set of all sides of $\Mcal_\ell$ and adapt
	the notation $\Sigma_\ell(K)$, $\Sigma_\ell(\Omega)$, $\Sigma_\ell(\Gamma_\mathrm{D})$, and $\Sigma_\ell(\Gamma_{\mathrm{N}})$ from \Cref{sec:regular_triangulation}. The convergence results of this section are established under the assumptions \ref{assumption-mesh-1}--\ref{assumption-mesh-2} below.
	\begin{enumerate}[wide,label = (M\arabic*)]
		\item Assume that there exists a universal constant $\varrho > 0$ such that, for all level $\ell \in \N_0$, $\Mcal_\ell$ admits a shape-regular simplicial subtriangulation $\Tcal_\ell$ with the shape regularity $\geq \varrho$ defined in \Cref{sec:regular_triangulation} and, for each simplex $T \in \Tcal_\ell$, there exists a unique cell $K \in \Mcal_\ell$ with $T \subseteq K$ and $\varrho h_K \leq h_T$.\label{assumption-mesh-1}
		\item Assume the existence of a universal constant $0 < \gamma < 1$ such that, $|\widehat{K}| \leq \gamma |K|$ holds for all $K \in \Mcal_\ell\setminus \Mcal_{\ell+1}$, $\widehat{K} \in \Mcal_{\ell+1}$ with $\widehat{K} \subset K$, and level $\ell \in \N_0$, i.e., the volume measure of all children $\widehat{K}$ of a refined cell $K$ is at most $\gamma|K|$.\label{assumption-mesh-2}
	\end{enumerate}
	The assumption \ref{assumption-mesh-1} is typical for the error analysis of HHO methods on polytopal meshes, cf., e.g., \cite{DiPietroErn2012,DiPietroErnLemaire2014,DiPietroErn2015,DiPietroDroniou2017,ErnZanotti2020}. The assumption \ref{assumption-mesh-2} holds for the newest-vertex bisection on simplicial triangulations with $\gamma = 1/2$.
	\begin{remark}[equivalence of side lengths]\label{rem:equivalence-mesh-size}
		The assumption (M1) ensures that $h_S \approx h_K \approx |K|^{1/n}$ holds for all $K \in \Mcal_\ell$ and $S \in \Sigma_\ell(K)$ with equivalence constants that exclusively depend on the universal constant $\varrho$ in \ref{assumption-mesh-1} \cite[Lemma 1.42]{DiPietroErn2012}.
	\end{remark}
	\begin{lemma}[mesh-size reduction]\label{rem:mesh-size-reduction}
		Suppose that the sequence $(\Mcal_\ell)_{\ell \in \N_0}$ satisfies \ref{assumption-mesh-2}, then the mesh-size function $h_\ell \in P_0(\Mcal_\ell)$ with $h_\ell|_K \coloneqq |K|^{1/n}$ for all $K \in \Mcal_\ell$ satisfies $\lim_{\ell \to \infty} \|h_\ell\|_{L^\infty(\Omega_\ell)} = 0$ for $\Omega_\ell \coloneqq \mathrm{int}(\cup (\Mcal_\ell\setminus\Mcal_{\ell+1}))$.
	\end{lemma}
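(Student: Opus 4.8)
The plan is to turn the \emph{per-refinement} volume decay encoded in \ref{assumption-mesh-2} into a statement that is uniform over the refined region, by a bookkeeping argument on the refinement forest in the spirit of \cite{NochettoVeeser2012}. First I would record the structural facts implicit in the adaptive loop of \Cref{sec:adaptive_algorithm}: every cell $K\in\Mcal_\ell\setminus\Mcal_{\ell+1}$ is partitioned by finitely many cells of $\Mcal_{\ell+1}$ (its \emph{children}), since $\Mcal_{\ell+1}$ is finite; moreover, because $\Mcal_{\ell+2}$ refines $\Mcal_{\ell+1}$, and so on, such a $K$ never reappears in any later mesh. This organizes $\bigcup_{\ell\in\N_0}\Mcal_\ell$ into a forest with roots $\Mcal_0$, in which the \emph{generation} $\mathrm{gen}(K)$ of a cell is the number of its proper ancestors, each cell has at most one refinement event, and the parent--child relation respects inclusion up to a nullset.

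The key claim is: for every $\delta>0$ there are only finitely many cells $K$, across all levels $\ell\in\N_0$, with $|K|\ge\delta$. To see this, iterate \ref{assumption-mesh-2} along the ancestor chain $K=K^{(0)}\subseteq K^{(1)}\subseteq\dots\subseteq K^{(j)}\in\Mcal_0$ with $j=\mathrm{gen}(K)$: each $K^{(i-1)}$ is a child of the refined cell $K^{(i)}$, so $|K^{(i-1)}|\le\gamma|K^{(i)}|$ and hence $|K|\le\gamma^{j}|K^{(j)}|\le\gamma^{j}|\Omega|$. Thus $|K|\ge\delta$ forces $\mathrm{gen}(K)\le J_\delta:=\log(\delta/|\Omega|)/\log\gamma$. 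On the other hand, the number of cells of any fixed generation is finite: $\Mcal_0$ is finite, and inductively each cell of generation $i$ is refined at most once, into finitely many cells of generation $i+1$. Hence there are only finitely many cells of generation $\le J_\delta$, which proves the claim.

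To conclude, suppose for contradiction that $\|h_\ell\|_{L^\infty(\Omega_\ell)}\not\to0$. Then there are $\delta>0$ and an infinite set $L\subseteq\N_0$ such that for every $\ell\in L$ some $K_\ell\in\Mcal_\ell\setminus\Mcal_{\ell+1}$ satisfies $|K_\ell|^{1/n}\ge\delta$, i.e.\ $|K_\ell|\ge\delta^{\,n}$. Since each $K_\ell$ is removed at level $\ell+1$ and never returns, the cells $(K_\ell)_{\ell\in L}$ are pairwise distinct, contradicting the claim applied with threshold $\delta^{\,n}$. Therefore $\lim_{\ell\to\infty}\|h_\ell\|_{L^\infty(\Omega_\ell)}=0$. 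Note that only \ref{assumption-mesh-2} enters; \ref{assumption-mesh-1} is not needed here.

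The genuinely delicate point is the bookkeeping in the second step: one must make precise that a cell is refined at most once and into finitely many children, so that ``bounded generation'' really yields ``finitely many cells'', together with the ``once refined, gone forever'' property that makes the $K_\ell$ distinct. Everything else — the geometric iteration of \ref{assumption-mesh-2} and the final contradiction — is elementary.
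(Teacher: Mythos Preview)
Your proof is correct and follows the same overall strategy as the paper: both arguments establish that only finitely many cells across all levels can have volume above any fixed threshold, and then exploit that the cells $K_\ell\in\Mcal_\ell\setminus\Mcal_{\ell+1}$ are pairwise distinct (once refined, a cell never returns) to force $\|h_\ell\|_{L^\infty(\Omega_\ell)}^n=|K_\ell|\to 0$.

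The only difference is in the bookkeeping for the finiteness claim. You bound the \emph{generation} of a large cell via iterated \ref{assumption-mesh-2}, $|K|\le\gamma^{\mathrm{gen}(K)}|\Omega|$, and then count cells of bounded generation by finite branching of the refinement forest. The paper instead groups cells into volume shells $\Mcal(j)=\{K:\gamma^{j+1}|\Omega|<|K|\le\gamma^j|\Omega|\}$ and bounds $|\Mcal(j)|<\gamma^{-(j+1)}$ by a \emph{packing} argument: two cells in the same shell cannot be ancestor and descendant (the descendant would drop to a lower shell), hence have interiors of measure-zero overlap, and their volumes sum to at most $|\Omega|$. Your version makes the forest structure explicit and needs the (harmless) observation that each refinement produces finitely many children; the paper's version is slightly more self-contained but relies on the same underlying forest property to justify disjointness. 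Both are equally elementary.
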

	\begin{proof}
		Given any $j \in \N_0$ and $\alpha_j \coloneqq \gamma^{j}|\Omega|$, define the set $\Mcal(j) \subset \cup_{\ell \in \N_0} \Mcal_\ell$ of all polytopes $K$ with volume measure $\alpha_{j+1} < |K| \leq \alpha_j$. Since the volume measure of any refined polytope is at least reduced by the factor $\gamma$, the polytopes of $\Mcal(j)$ are \emph{not} children of each other and so $|K \cap T| = 0$ holds for any two distinct polytopes $K, T \in \Mcal(j)$. This implies that the cardinality $|\Mcal(j)|$ of $\Mcal(j)$ satisfies $|\Mcal(j)| < \gamma^{-(j+1)}$.
		For any level $\ell \in \N_0$, select some $K_\ell \in \Mcal_\ell \setminus \Mcal_{\ell+1}$ with $|K_\ell| = \|h_\ell\|_{L^{\infty}(\Omega_\ell)}^n$. Since $K_\ell \notin \Mcal_j$ for all $j > \ell$, the polytopes $K_0,K_1,K_2,\dots$ are pairwise distinct. Given $N \in \N_0$, the number $|\{\ell \in \N_0 : |K_\ell| > \alpha_{N+1}\}|$ of all indices $\ell \in \N_0$ with $|K_\ell| > \alpha_{N+1}$ is bounded by $|\Mcal(0)| + |\Mcal(1)| + \dots + |\Mcal(N)| \leq (\gamma^{-(N+1)} - 1)/(1 - \gamma)$.
		Hence there exists a maximal index $L$ such that $\|h_\ell\|^n_{L^\infty(\Omega_\ell)} = |K_\ell| \leq \alpha_{N+1}$ for all $\ell \geq L$. Notice that \Cref{lem:mesh-size-reduction} follows for simplicial triangulations with $\gamma = 1/2$.
	\end{proof}
	\subsection{Stabilization}\label{sec:stabilization}
	The classical HHO method \cite{DiPietroDroniou2017,AbbasErnPignet2018} utilizes a gradient reconstruction $\GrRec_\ell : V(\Mcal_\ell) \to \Sigma(\Mcal_\ell)$ in the space $\Sigma(\Mcal_\ell) \coloneqq P_k(\Mcal_\ell;\M)$ of matrix-valued piecewise polynomials of total degree at most $k$. The discrete seminorm $\|\bullet\|_\ell$ of $V(\Mcal_\ell)$ and the operators $\Ical_\ell$, $\PotRec_\ell$, $\GrRec_\ell$ of this section are defined by the formulas \eqref{def:discrete-norm}--\eqref{def:gradient_reconstruction} in \Cref{sec:discrete_ansatz_space} with adapted notation, i.e., $\Tcal_\ell$ (resp.~$\Fcal_\ell$) is replaced by $\Mcal_\ell$ (resp.~$\Sigma_\ell$).
	\begin{remark}[need of stabilization]
		The kernel of the gradient reconstruction $\GrRec_\ell$ restricted to $V_\mathrm{D}(\Mcal_\ell)$ is not trivial. For instance, any $v_\ell = (v_{\Mcal_\ell}, 0) \in V_\mathrm{D}(\Mcal_\ell)$ with $v_{\Mcal_\ell} \in P_k(\Mcal_\ell;\R^m)$ and $v_{\Mcal_\ell} \perp P_{k-1}(\Mcal_\ell;\R^m)$ (with the convention $P_{-1}(\Mcal_0;\R^m) \coloneqq \{0\}$) satisfies $\GrRec_\ell v_\ell = 0$ and the norm equivalence in \Cref{lem:stability_gradient_reconstruction}.a fails.
		On simplicial meshes, a gradient reconstruction in any discrete space $\Sigma(\Mcal_\ell)$ with $\RT_k(\Mcal_\ell;\M) \subset \Sigma(\Mcal_\ell)$ is stable, but the commutativity from \Cref{lem:stability_gradient_reconstruction}.b may fail if $\Sigma(\Mcal_\ell)$ is too large, e.g., $\Sigma(\Mcal_\ell) = P_{k+1}(\Mcal_\ell;\M)$ \cite{AbbasErnPignet2018}.
	\end{remark}
	\noindent The stabilization function $\sfrak_\ell : V(\Mcal_\ell) \times V(\Mcal_\ell) \to \R$ in the HHO methodology is defined,
	for any $u_\ell, v_\ell = (v_{\Mcal_\ell},v_{\Sigma_\ell}) \in V(\Mcal_\ell)$ and any side $S \in \Sigma_\ell(K)$ of $K \in \Mcal_\ell$ with diameter $h_S = \mathrm{diam}(S)$, by $\sfrak_\ell(u_\ell; v_\ell) \coloneqq \sum_{K \in \Mcal_\ell} \sfrak_K(u_\ell; v_\ell)$ and
	\begin{align}
		\begin{split}
			\Scal_{K,S} v_\ell \coloneqq \Pi_S^k (v_S - v_K - (1 - \Pi_K^k) (\PotRec_\ell v_\ell)|_K) \in P_k(S;\R^m),\\
			\sfrak_K(u_\ell; v_\ell) \coloneqq \sum_{S \in \Sigma_\ell(K)} h_S^{1-p} \int_S |{\Scal_{K,S} u_\ell}|^{p-2} \Scal_{K,S} u_\ell \cdot \Scal_{K,S} v_\ell \d{s}.
		\end{split}
		\label{def:stabilization}
	\end{align}
	Notice that $\sfrak_\ell(\bullet;\bullet)$ is linear in the second component, but \emph{not} in the first unless $p = 2$. The relevant properties of $\sfrak_\ell(\bullet;\bullet)$ are summarized below.
	\begin{lemma}[stabilization]\label{lem:stabilization}
		Any $u_\ell, v_\ell = (v_{\Mcal_\ell},v_{\Sigma_\ell}) \in V(\Mcal_\ell)$, $v \in V$, and $K \in \Mcal_\ell$ satisfy (a)--(e) with parameters $p,r,s,t$ from \Cref{tab:parameters}.
		\begin{enumerate}[wide, label = (\alph*)]
			\item\label{lem:stabilization-a} $\|v_\ell\|_\ell^p \approx \|\GrRec_\ell v_\ell\|_{L^p(\Omega)}^p + \sfrak_{\ell}(v_\ell;v_\ell)$.
			\item\label{lem:stabilization-b} $s_K(\Ical_\ell v; \Ical_\ell v)^{1/p} \lesssim \min_{\varphi_{h} \in P_{k+1}(K;\R^m)} \|\D (v - \varphi_{h})\|_{L^p(K)}$. In particular, if $v \in W^{k+2,p}(K;\R^m)$, then $s_K(\Ical_\ell v; \Ical_\ell v)^{1/p} \lesssim h_K^{k+1}|v|_{W^{k+2,p}(K)}$.
			\item\label{lem:stabilization-c}
			$\begin{aligned}[t]
				\sfrak_K(v_\ell;v_\ell) &\lesssim h_K^{-p}\|\Pi_{K}^k(\PotRec_\ell v_\ell - v_{K})\|_{L^p(K)}^p\\
				&\qquad + \sum_{S \in \Sigma_\ell(K)} h_S^{1-p}\|\Pi_S^k((\PotRec_\ell v_\ell)_K - v_S)\|_{L^p(S)}^p.
			\end{aligned}$
			\item\label{lem:stabilization-d} $\sfrak_K(u_\ell;v_\ell) \leq \sfrak_K(u_\ell;u_\ell)^{1/p'}\sfrak_K(v_\ell;v_\ell)^{1/p}$.
			\item\label{lem:stabilization-e}
			$\begin{aligned}[t]
				&\sum_{K \in \Mcal_\ell} \sum_{S \in \Sigma_\ell(K)} \|h_S^{-1/p'}\Scal_{K,S} (u_\ell - v_\ell)\|_{L^p(S)}^r\\
				&\lesssim \big(1 + \sfrak_{\ell}(u_\ell;u_\ell) + \sfrak_\ell(v_\ell;v_\ell)\big)^{t/t'} (\sfrak_\ell(v_\ell;v_\ell)/p - \sfrak_\ell(u_\ell;v_\ell) + \sfrak_{\ell}(u_\ell;u_\ell)/p').
			\end{aligned}$
		\end{enumerate}
	\end{lemma}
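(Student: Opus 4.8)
The plan is to treat the five claims in the order (d), (c), (b), (a), (e): the algebraic estimates (d) and (c) are elementary and (c) feeds into the norm equivalence (a), while (e) is the essential new ingredient and will be established last. For (d), a pointwise Cauchy–Schwarz inequality on a side $S\in\Sigma_\ell(K)$ gives $|\Scal_{K,S}u_\ell|^{p-2}\Scal_{K,S}u_\ell\cdot\Scal_{K,S}v_\ell\le|\Scal_{K,S}u_\ell|^{p-1}|\Scal_{K,S}v_\ell|$; a Hölder inequality on $S$ with exponents $p'$ and $p$ (so that $(p-1)p'=p$), the factorisation $h_S^{1-p}=(h_S^{1-p})^{1/p'}(h_S^{1-p})^{1/p}$, and a discrete Hölder inequality over $S\in\Sigma_\ell(K)$ yield (d). For (c), first expand $\Scal_{K,S}v_\ell=\Pi_S^k(v_S-v_K-(1-\Pi_K^k)(\PotRec_\ell v_\ell)|_K)$ and use that $v_S$, $v_K|_S$ and $(\Pi_K^k(\PotRec_\ell v_\ell)|_K)|_S$ all lie in $P_k(S;\R^m)$ to rewrite it as $\Pi_S^k((\PotRec_\ell v_\ell)|_K-v_S)-(\Pi_K^k((\PotRec_\ell v_\ell)|_K-v_K))|_S$; a triangle inequality, the discrete trace inequality $\|w_h\|_{L^p(S)}\lesssim h_K^{-1/p}\|w_h\|_{L^p(K)}$ for polynomials $w_h$, the equivalence $h_S\approx h_K$ from \Cref{rem:equivalence-mesh-size}, and summation over $S\in\Sigma_\ell(K)$ give (c).

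The key point for (b) is that $\PotRec_K\circ\Ical_\ell$ reproduces $P_{k+1}(K;\R^m)$: since $\Delta\varphi_{k+1}\in P_{k-1}(K)$ and $(\D\varphi_{k+1}\nu_K)|_S\in P_k(S)$, the $L^2$-projections on the right-hand side of \eqref{def:potential_reconstruction} are inactive against a polynomial $q\in P_{k+1}(K;\R^m)$, so $\int_K\D\PotRec_K\Ical_\ell q:\D\varphi_{k+1}\d{x}=\int_K\D q:\D\varphi_{k+1}\d{x}$ for all $\varphi_{k+1}$, and \eqref{def:potential-reconstruction-integral-mean} fixes the constant; the same computation shows that $\D\PotRec_K\Ical_\ell w$ is the $L^2$-projection of $\D w$ onto $\D_\pw P_{k+1}(K;\R^m)$ for any $w\in W^{1,p}(K;\R^m)$. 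Hence $\Scal_{K,S}\circ\Ical_\ell$ annihilates $P_{k+1}(K;\R^m)$, so that $\Scal_{K,S}(\Ical_\ell v)=\Scal_{K,S}(\Ical_\ell(v-\varphi_h))$ for every $\varphi_h\in P_{k+1}(K;\R^m)$. Bounding the three terms of $\Scal_{K,S}(\Ical_\ell(v-\varphi_h))$ by trace inequalities, the $L^p$-stability of $\Pi_K^k$, $\Pi_S^k$ and of the $L^2$-projection onto $\D_\pw P_{k+1}(K;\R^m)$ (valid under \ref{assumption-mesh-1} via the subtriangulation), then choosing $\varphi_h$ to realise $\min_{\varphi_h}\|\D(v-\varphi_h)\|_{L^p(K)}$ with $\Pi_K^0(v-\varphi_h)=0$ (Poincaré on $K$), gives $\|h_S^{-1/p'}\Scal_{K,S}(\Ical_\ell v)\|_{L^p(S)}\lesssim\min_{\varphi_h}\|\D(v-\varphi_h)\|_{L^p(K)}$; summation over $S\in\Sigma_\ell(K)$ proves (b), and the Bramble–Hilbert lemma \cite{BrennerScott2008} the ``in particular''. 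For (a), the bound ``$\lesssim$'' follows from (c) — whose right-hand side is $\lesssim\|v_\ell\|_\ell^p$ exactly as in \eqref{ineq:conforming-companion-stability} and the proof of \Cref{lem:conforming-companion} — together with $\|\GrRec_\ell v_\ell\|_{L^p(K)}\lesssim\|v_\ell\|_\ell$ from integration by parts in \eqref{def:gradient_reconstruction} and inverse estimates; the reverse bound follows by reconstructing $v_K$ and $v_S$ from $\Pi_K^k(\PotRec_\ell v_\ell)|_K$ resp.\ $\Pi_S^k(\PotRec_\ell v_\ell)|_K$, with the correction terms controlled by $\|\GrRec_\ell v_\ell\|_{L^p(K)}$ ($L^p$-stability of the $L^2$-projection onto $\D_\pw P_{k+1}$) and by $\sfrak_K(v_\ell;v_\ell)$, which is the standard HHO argument \cite{DiPietroDroniou2017,AbbasErnPignet2018}.

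For (e), the starting observation is that, after the side-wise rescaling $z\mapsto h_S^{-1/p'}z$, the right-hand factor $\sfrak_\ell(v_\ell;v_\ell)/p-\sfrak_\ell(u_\ell;v_\ell)+\sfrak_\ell(u_\ell;u_\ell)/p'$ equals the nonnegative Bregman sum $\mathcal B:=\sum_{K\in\Mcal_\ell}\sum_{S\in\Sigma_\ell(K)}h_S^{1-p}\int_S D_{W_0}(\Scal_{K,S}u_\ell,\Scal_{K,S}v_\ell)\d{s}$, where $W_0(z):=|z|^p/p$, $D_{W_0}(A,B):=W_0(B)-W_0(A)-\D W_0(A)\cdot(B-A)$, and one uses $\D W_0(z)\cdot z-W_0(z)=|z|^p/p'$. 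The density $W_0$ satisfies \eqref{ineq:cc-primal} with $r,s$ from \Cref{tab:parameters} (the $p$-Laplace estimates \cite{GlowinskiMarrocco1975,CKlose2003}), applied pointwise on $S$ to $(A,B)=(\Scal_{K,S}u_\ell,\Scal_{K,S}v_\ell)$. For $2\le p<\infty$ (where $r=p$, $s=0$, $t/t'=0$) one simply multiplies by $h_S^{1-p}$, integrates over $S$ and sums to obtain (e). For $1<p<2$ (where $r=2$, $s=2-p$, $t/t'=(2-p)/p$) the plan is to raise the pointwise inequality to the power $p/2$, apply Hölder on $S$ with exponents $2/(2-p)$ and $2/p$, use $(1+x^{2-p}+y^{2-p})^{p/(2-p)}\lesssim 1+x^p+y^p$ and $|S|\approx h_S^{n-1}$, take the $(2/p)$-th power, multiply by $h_S^{-2/p'}$, and check that the powers of $h_S$ collapse (the resulting exponent is zero); with $a_{K,S}^p:=\|h_S^{-1/p'}\Scal_{K,S}u_\ell\|_{L^p(S)}^p$, $b_{K,S}^p:=\|h_S^{-1/p'}\Scal_{K,S}v_\ell\|_{L^p(S)}^p$, $C_{K,S}:=h_S^{1-p}\int_S D_{W_0}\d{s}$, $d_{K,S}:=\|h_S^{-1/p'}\Scal_{K,S}(u_\ell-v_\ell)\|_{L^p(S)}$ and the harmless bound $h_S^{1-p}|S|\approx h_S^{n-p}\lesssim 1$, this yields $d_{K,S}^2\lesssim(1+(a_{K,S}^p+b_{K,S}^p)^{(2-p)/p})\,C_{K,S}$. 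Summing over $K,S$ and using $\sum_{K,S}C_{K,S}=\mathcal B$, a discrete Hölder inequality with exponents $p/(2-p)$ and $p/(2(p-1))$, the identity $\sum_{K,S}(a_{K,S}^p+b_{K,S}^p)=\sfrak_\ell(u_\ell;u_\ell)+\sfrak_\ell(v_\ell;v_\ell)$, and $\sum_{K,S}C_{K,S}^{p/(2(p-1))}\le(\sum_{K,S}C_{K,S})^{p/(2(p-1))}$ (the exponent exceeds $1$), one arrives at $\sum_{K,S}d_{K,S}^2\lesssim\mathcal B+(\sfrak_\ell(u_\ell;u_\ell)+\sfrak_\ell(v_\ell;v_\ell))^{(2-p)/p}\mathcal B\lesssim(1+\sfrak_\ell(u_\ell;u_\ell)+\sfrak_\ell(v_\ell;v_\ell))^{(2-p)/p}\mathcal B$, which is (e) since $(2-p)/p=t/t'$.

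I expect (e) for $1<p<2$ to be the delicate step. The naive approach — applying the already-established integrated convexity control \eqref{ineq:cc-primal-integrated} directly on the skeleton — fails, because the total surface ``measure'' of the skeleton, which plays the role of $|\Omega|$ in that estimate, is not bounded as the mesh is refined. The remedy is the scaling bookkeeping above, in which the additive ``$1$'' arising from the lower-order term of \eqref{ineq:cc-primal} re-enters multiplied by $C_{K,S}$, whose sum over all sides is $\mathcal B$ rather than the number of sides; the remaining technical points are the exact cancellation of the $h_S$-powers and the bound $h_S^{1-p}|S|\approx h_S^{n-p}\lesssim 1$, which holds whenever $n\ge p$ (in particular for $n\ge 2$ and $1<p<2$).
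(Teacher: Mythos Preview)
Your treatment of (a)--(d) matches the paper's: (a) and (b) are cited from \cite{DiPietroErnLemaire2014,DiPietroDroniou2017} and \cite{ErnZanotti2020}, (c) is exactly the triangle plus discrete trace inequality you outline, and (d) is H\"older on each side followed by a discrete H\"older over $S\in\Sigma_\ell(K)$.

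For (e) your argument is correct but considerably more elaborate than the paper's. The paper applies the integrated convexity control \eqref{ineq:cc-primal-integrated} (for $W_0(a)=|a|^p/p$) directly on each single side $S$, with $\varrho=h_S^{-1/p'}\Scal_{K,S}v_\ell$ and $\xi=h_S^{-1/p'}\Scal_{K,S}u_\ell$, obtaining the per-side estimate \eqref{ineq:cc-stabilization}. The point you overlook is that the prefactor $(|S|+a_{K,S}^p+b_{K,S}^p)^{t/t'}$ is already bounded by the \emph{global} factor $(1+\sfrak_\ell(u_\ell;u_\ell)+\sfrak_\ell(v_\ell;v_\ell))^{t/t'}$, simply because $|S|\le\mathrm{diam}(\Omega)^{n-1}$ is a fixed constant and $a_{K,S}^p\le\sfrak_\ell(u_\ell;u_\ell)$, $b_{K,S}^p\le\sfrak_\ell(v_\ell;v_\ell)$, together with monotonicity of $x\mapsto x^{t/t'}$. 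One then pulls this factor out and sums the Bregman terms to obtain (e) at once, with no case distinction $p\gtrless 2$, no $h_S$-bookkeeping, and no restriction $n\ge p$. Your worry about ``the total surface measure of the skeleton'' is a red herring: the estimate is not applied on the skeleton as one domain, but side by side, and only the measure of an individual side enters. Your longer route via pointwise raising to the $p/2$-th power and H\"older also works, but buys nothing extra here.
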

	\begin{proof}
		The norm equivalence in \ref{lem:stabilization-a} is established \cite[Lemma 4]{DiPietroErnLemaire2014} for $p= 2$ and extended to $1 \leq p < \infty$ in \cite[Lemma 5.2]{DiPietroDroniou2017}; the approximation property \ref{lem:stabilization-b} is \cite[Lemma 3.2]{ErnZanotti2020}.
		The upper bound \ref{lem:stabilization-c} follows immediately from a triangle and a discrete trace inequality. The proof of \ref{lem:stabilization-d} concerns $K \in \Mcal_\ell$ and $S \in \Sigma_\ell(K)$. A Hölder inequality with the exponents $p$, $p'$ and $1-p+1/p' = (1-p)/p'$ show
		\begin{align*}
			&h_S^{1-p} \int_S |\Scal_{K,S} u_\ell|^{p-2} \Scal_{K,S} u_\ell \cdot \Scal_{K,S} v_\ell \d{s}\\
			&\quad\leq \|h_S^{(1-p)/p'}|\Scal_{K,S} u_\ell|^{p-2}\Scal_{K,S} u_\ell\|_{L^{p'}(S)}\|h_S^{-1/p'}\Scal_{K,S} v_\ell\|_{L^p(S)}\\
			&\quad= \|h_S^{-1/p'}\Scal_{K,S} u_\ell\|_{L^p(S)}^{p/p'}\|h_S^{-1/p'}\Scal_{K,S} v_\ell\|_{L^p(S)}.
		\end{align*}
		The sum of this over all $S \in \Sigma_\ell(K)$ and a Cauchy inequality prove \ref{lem:stabilization-d}.
		The proof of \ref{lem:stabilization-e} departs from the function $W(a) \coloneqq |a|^p/p$ for $a \in \R^m$ with the convexity control \eqref{ineq:cc-primal}.
		The integral of \eqref{ineq:cc-primal} over the side $S$ leads to \eqref{ineq:cc-primal-integrated} for all $\varrho, \xi \in L^p(S;\R^m)$ and $\Omega$ (resp.~$\M$) replaced by $S$ (resp.~$\R^m$).
		The choice $\varrho \coloneqq h_S^{-1/p'}\Scal_{K,S} v_\ell$ and $\xi \coloneqq h_S^{-1/p'}\Scal_{K,S} u_\ell$ in \eqref{ineq:cc-primal-integrated} leads to
		\begin{align}
			&(3\cnstS{cnst:cc-primal})^{-1}(|S| + \|h_S^{-1/p'}\Scal_{K,S} u_\ell\|_{L^p(S)}^p + \|h_S^{-1/p'}\Scal_{K,S} v_\ell\|_{L^p(S)}^p)^{-t/t'}\nonumber\\
			&\quad\times \|h_S^{-1/p'}\Scal_{K,S} (u_\ell - v_\ell)\|_{L^p(S)}^r
			\leq \|h_S^{-1/p'} \Scal_{K,S} v_\ell\|^p_{L^p(S)}/p
			\label{ineq:cc-stabilization}\\
			&\quad\qquad - \int_S h_S^{1-p}|\Scal_{K,S} u_\ell|^{p-2}\Scal_{K,S} u_\ell \cdot \Scal_{K,S} v_\ell \d{s} + \|h_S^{-1/p'}\Scal_{K,S} u_\ell\|_{L^p(S)}^p/p'.\nonumber
		\end{align}
		The sum of this over all $S \in \Sigma_\ell(K)$ and $K \in \Mcal_\ell$ concludes the proof of \ref{lem:stabilization-e}.
	\end{proof}
	\subsection{Stabilized HHO method on a polytopal mesh}
	The discrete problem minimizes the discrete energy
	\begin{align}
		E_\ell(v_\ell) \coloneqq \int_\Omega (W(\GrRec_\ell v_\ell) - f \cdot v_{\Mcal_\ell}) \d{x} - \int_{\Gamma_{\mathrm{N}}} g \cdot v_{\Sigma_\ell} \d{s} + \sfrak_\ell(v_\ell;v_\ell)/p\label{def:discrete-energy-stabilized}
	\end{align}
	among $v_\ell = (v_{\Mcal_\ell}, v_{\Sigma_\ell}) \in \Acal(\Mcal_\ell)$.
	\begin{theorem}[discrete minimizers]\label{lem:existence-discrete-solutions-sHHO}
		The minimal discrete energy $\inf E_\ell(\Acal(\Mcal_\ell))$ is attained. There exists a positive constant $\newcnstL\label{cnst:G-u-h-sHHO} > 0$ that merely depends on \cnstS{cnst:growth-left-1}, \cnstS{cnst:growth-left-2}, $\Omega$, $\Gamma_{\mathrm{D}}$, $u_\mathrm{D}$, $f$, $g$, $\varrho$ in \ref{assumption-mesh-1}, $k$, and $p$ with $\|\GrRec_\ell u_\ell\|_{L^p(\Omega)}^p + \sfrak_\ell(u_\ell;u_\ell) \leq \cnstL{cnst:G-u-h-sHHO}^p$ for any discrete minimizer $u_\ell \in \arg\min E_\ell(\Acal(\Mcal_\ell))$. 
		Any discrete stress $\sigma_\ell \coloneqq \Pi_{\Mcal_\ell}^k \D W(\GrRec_\ell u_\ell)$ satisfies the discrete Euler-Lagrange equations
		\begin{align}
			\int_\Omega \sigma_\ell : \GrRec_\ell v_\ell \d{x} = \int_\Omega f \cdot v_{\Mcal_\ell} \d{x} + \int_{\Gamma_{\mathrm{N}}} g \cdot v_{\Sigma_\ell} \d{s} - \sfrak_\ell(u_\ell;v_\ell)
			\label{eq:dELE-sHHO}
		\end{align}
		for all $v_\ell = (v_{\Mcal_\ell}, v_{\Sigma_\ell}) \in V_\mathrm{D}(\Mcal_\ell)$. 
		If $W$ satisfies \eqref{ineq:cc-primal}, then $u_\ell = \arg \min E_\ell(\Acal(\Mcal_\ell))$ is unique. If $W$ satisfies \eqref{ineq:cc-stress}, then $\D W(\GrRec_\ell u_\ell) \in L^{p'}(\Omega;\M)$ is unique (independent of the choice of a (possibly non-unique) discrete minimizer $u_\ell$).
	\end{theorem}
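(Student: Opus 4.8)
The plan is to follow the proof of \Cref{lem:existence-discrete-solutions} closely, with the norm equivalence \Cref{lem:stability_gradient_reconstruction}.a replaced by \Cref{lem:stabilization}.a and the stabilization term carried along. Write $N_\ell(v_\ell)\coloneqq\bigl(\|\GrRec_\ell v_\ell\|_{L^p(\Omega)}^p+\sfrak_\ell(v_\ell;v_\ell)\bigr)^{1/p}$; by \Cref{lem:stabilization}.a, $N_\ell\approx\|\bullet\|_\ell$ on $V(\Mcal_\ell)$, so $N_\ell$ is a norm on $V_\mathrm{D}(\Mcal_\ell)$, and $N_\ell$ is subadditive (an aggregate of seminorms of $v_\ell$). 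First I would prove the coercivity $E_\ell(v_\ell)\geq c\,N_\ell(v_\ell)^p-C$ on $\Acal(\Mcal_\ell)$ for a data-dependent $C$: the lower growth \ref{assumption-2} bounds $\int_\Omega W(\GrRec_\ell v_\ell)\d x$ from below by $\cnstS{cnst:growth-left-1}\|\GrRec_\ell v_\ell\|_{L^p(\Omega)}^p-\cnstS{cnst:growth-left-2}|\Omega|$, the term $\sfrak_\ell(v_\ell;v_\ell)/p\geq0$ is kept, and the data terms are controlled after splitting $v_\ell=\Ical_\ell u_\mathrm{D}+w_\ell$ ($w_\ell\in V_\mathrm{D}(\Mcal_\ell)$) by the discrete Friedrichs and trace inequalities on polytopal meshes \cite{BuffaOrtner2009,DiPietroErn2010,DiPietroDroniou2017} (valid under \ref{assumption-mesh-1}), \Cref{lem:stabilization}.a,b, subadditivity of $N_\ell$, the commutativity $\GrRec_\ell\Ical_\ell u_\mathrm{D}=\Pi_{\Sigma(\Mcal_\ell)}\D u_\mathrm{D}$ (as in \Cref{lem:stability_gradient_reconstruction}.b), and the $L^p$ stability of $\Pi_{\Sigma(\Mcal_\ell)}$ \cite[Lemma 3.2]{DiPietroDroniou2017}, which give $\|v_{\Mcal_\ell}\|_{L^p(\Omega)}+\|v_{\Sigma_\ell}\|_{L^p(\Gamma_\mathrm{N})}\lesssim N_\ell(v_\ell)+\|u_\mathrm{D}\|_{W^{1,p}(\Omega)}$; a Young inequality then absorbs $\int_\Omega f\cdot v_{\Mcal_\ell}\d x$ and $\int_{\Gamma_\mathrm{N}}g\cdot v_{\Sigma_\ell}\d s$. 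Since $E_\ell$ is continuous ($W$ and $\xi\mapsto|\xi|^p$ are) and $\Acal(\Mcal_\ell)$ is finite-dimensional, the direct method supplies a minimizer $u_\ell$; the comparison $E_\ell(u_\ell)\leq E_\ell(\Ical_\ell u_\mathrm{D})$ with $E_\ell(\Ical_\ell u_\mathrm{D})$ bounded by a data-dependent constant (via the two-sided growth \ref{assumption-2}, the commutativity, the $L^p$ stability of $\Pi_{\Sigma(\Mcal_\ell)}$, and \Cref{lem:stabilization}.b) then gives $N_\ell(u_\ell)^p\leq\cnstL{cnst:G-u-h-sHHO}^p$.

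Next I would derive \eqref{eq:dELE-sHHO}: as $\xi\mapsto|\xi|^p$ is $C^1$ on $\R^m$ for $1<p<\infty$, $E_\ell$ is G\^ateaux differentiable on $\Acal(\Mcal_\ell)$ with the derivative of $v_\ell\mapsto\sfrak_\ell(v_\ell;v_\ell)/p$ in a direction $w_\ell$ equal to $\sfrak_\ell(v_\ell;w_\ell)$ (chain rule), and stationarity $\D E_\ell(u_\ell)[v_\ell]=0$ for $v_\ell\in V_\mathrm{D}(\Mcal_\ell)$ together with $\int_\Omega\D W(\GrRec_\ell u_\ell):\GrRec_\ell v_\ell\d x=\int_\Omega\sigma_\ell:\GrRec_\ell v_\ell\d x$ for $\GrRec_\ell v_\ell\in\Sigma(\Mcal_\ell)=P_k(\Mcal_\ell;\M)$ gives the assertion. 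For uniqueness, let $u_\ell,\widetilde u_\ell$ be two discrete minimizers; $E_\ell(u_\ell)=E_\ell(\widetilde u_\ell)$, the stationarity $\D E_\ell(\widetilde u_\ell)[u_\ell-\widetilde u_\ell]=0$, and linearity of $\sfrak_\ell(\bullet;\bullet)$ in the second slot yield
\[
0=\int_\Omega\bigl(W(\GrRec_\ell u_\ell)-W(\GrRec_\ell\widetilde u_\ell)-\D W(\GrRec_\ell\widetilde u_\ell):\GrRec_\ell(u_\ell-\widetilde u_\ell)\bigr)\d x+\frac{\sfrak_\ell(u_\ell;u_\ell)}{p}-\sfrak_\ell(\widetilde u_\ell;u_\ell)+\frac{\sfrak_\ell(\widetilde u_\ell;\widetilde u_\ell)}{p'}.
\]
The integral is nonnegative by convexity \ref{assumption-1} and the stabilization part by \Cref{lem:stabilization}.e, so both vanish. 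Under \eqref{ineq:cc-primal}, the integrated estimate \eqref{ineq:cc-primal-integrated} with $\varrho=\GrRec_\ell u_\ell$ and $\xi=\GrRec_\ell\widetilde u_\ell$ forces $\GrRec_\ell u_\ell=\GrRec_\ell\widetilde u_\ell$ a.e.~in $\Omega$, while \Cref{lem:stabilization}.e forces $\Scal_{K,S}(u_\ell-\widetilde u_\ell)=0$ for all $K\in\Mcal_\ell$, $S\in\Sigma_\ell(K)$; hence $\sfrak_\ell(u_\ell-\widetilde u_\ell;u_\ell-\widetilde u_\ell)=0$ and \Cref{lem:stabilization}.a give $\|u_\ell-\widetilde u_\ell\|_\ell=0$, i.e., $u_\ell=\widetilde u_\ell$. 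Under \eqref{ineq:cc-stress}, \eqref{ineq:cc-dual-integrated} in place of \eqref{ineq:cc-primal-integrated} forces $\D W(\GrRec_\ell u_\ell)=\D W(\GrRec_\ell\widetilde u_\ell)$ a.e.~in $\Omega$.

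The hard part is the bookkeeping of the stabilization. In the coercivity step it must be \emph{retained}, not discarded as nonnegative, so that the a~priori bound controls the whole of $N_\ell(u_\ell)$ and not merely $\|\GrRec_\ell u_\ell\|_{L^p(\Omega)}$; this forces the use of \Cref{lem:stabilization}.a and of the subadditivity of $\sfrak_\ell(\bullet;\bullet)^{1/p}$ to split off the Dirichlet lift $\Ical_\ell u_\mathrm{D}$. In the uniqueness argument, because $\sfrak_\ell$ is nonlinear in its first slot when $p\neq2$, the cross term $\sfrak_\ell(u_\ell;u_\ell)/p-\sfrak_\ell(\widetilde u_\ell;u_\ell)+\sfrak_\ell(\widetilde u_\ell;\widetilde u_\ell)/p'$ replaces the squared difference familiar from the quadratic case and has to be recognised as exactly the expression bounded below in \Cref{lem:stabilization}.e; moreover, passing from $\GrRec_\ell u_\ell=\GrRec_\ell\widetilde u_\ell$ to $u_\ell=\widetilde u_\ell$ needs the stabilization once more, since $\GrRec_\ell$ alone has nontrivial kernel on $V_\mathrm{D}(\Mcal_\ell)$.
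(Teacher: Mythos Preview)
Your proposal is correct and follows essentially the same approach as the paper: the paper's proof is a terse reference to \Cref{lem:existence-discrete-solutions} with \Cref{lem:stabilization}.a replacing \Cref{lem:stability_gradient_reconstruction}.a for coercivity, and then invokes strict convexity of $W$ and of $v_\ell\mapsto\sfrak_\ell(v_\ell;v_\ell)$ for uniqueness. Your uniqueness argument via the explicit Bregman-type identity and \Cref{lem:stabilization}.e is a more detailed unpacking of the same strict-convexity mechanism, and your observation that the stabilization is needed a second time (through \Cref{lem:stabilization}.a) to pass from $\GrRec_\ell u_\ell=\GrRec_\ell\widetilde u_\ell$ to $u_\ell=\widetilde u_\ell$ is a point the paper leaves implicit.
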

	\begin{proof}
		The proof follows that of \Cref{lem:existence-discrete-solutions}. The norm equivalence in \Cref{lem:stabilization}.a and the lower growth of $W$ lead to the coercivity of $E_\ell$ in $\Acal(\Mcal_\ell)$ with respect to the seminorm $\|\bullet\|_\ell^p \approx \|\GrRec_\ell \bullet\|_{L^p(\Omega)}^p + \sfrak_\ell(\bullet;\bullet)$ from \Cref{lem:stabilization}.a. This implies the existence of discrete minimizers and the bound $\|\GrRec_\ell u_\ell\|_{L^p(\Omega)}^p + \sfrak_\ell(u_\ell;u_\ell) \leq \cnstL{cnst:G-u-h-sHHO}^p$ for all $u_\ell \in \arg \min E_\ell(\Acal(\Mcal_\ell))$.
		If $W$ satisfies \eqref{ineq:cc-primal}, then the strict convexity of $W$ and of $\sfrak_\ell$ in \Cref{lem:stabilization}.c leads to the uniqueness of $u_\ell = \arg\min E_\ell(\Acal(\Mcal_\ell))$. If $W$ satisfies \eqref{ineq:cc-stress}, then the uniqueness of $\D W(\GrRec_\ell u_\ell)$ follows as in \cite{CPlechac1997,CLiu2015,CarstensenTran2020}.
	\end{proof}
	\noindent The following lemma extends \Cref{lem:conforming-companion} to polytopal meshes.
	\begin{lemma}\label{lem:conforming-companion-sHHO}
		There exists a linear operator $\Jcal_{\ell} : V(\Mcal_\ell) \to V$ such that any $v_\ell = (v_{\Mcal_\ell}, v_{\Sigma_\ell}) \in V(\Mcal_\ell)$ satisfies
		\begin{align}
			\Pi_{\Mcal_\ell}^k \Jcal_\ell v_\ell = v_{\Mcal_\ell} \quad\text{and}\quad \Pi_{\Sigma_\ell}^k \Jcal_{\ell} v_\ell = v_{\Sigma_\ell}
			\label{def:conforming-companion-sHHO}
		\end{align}
		and, for any $K \in \Mcal_\ell$, the estimate
		\begin{align}
			\begin{split}
				&\|\GrRec_\ell v_\ell - \D \Jcal_\ell v_\ell\|_{L^p(K)}^p \lesssim \sum_{S \in \Sigma_\ell(\Omega), S \cap K \neq \emptyset} h_S^{1-p}\|[\PotRec_\ell v_\ell]_S\|_{L^p(S)}^p\\
				&\qquad + \sum_{S \in \Sigma_\ell(K)} h_S^{1-p}\|(\PotRec_\ell v_\ell)|_K - v_S\|_{L^p(S)}^p + h_K^{-p} \|\PotRec_\ell v_\ell - v_K\|_{L^p(K)}^p.
			\end{split}
			\label{ineq:conforming-companion-estimate-sHHO}
		\end{align}
		In particular, $\Jcal_\ell$ is stable in the sense that $\|\D \Jcal_\ell v_\ell\|_{L^p(\Omega)} \leq \Lambda_1 \|v_\ell\|_\ell$ holds with the constant $\Lambda_1$ that exclusively depends on $k$, $p$, and $\varrho$ in \ref{assumption-mesh-1}.
	\end{lemma}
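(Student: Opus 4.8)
The plan is to transfer $v_\ell$ to the shape-regular simplicial submesh $\Tcal_\ell$ of \ref{assumption-mesh-1} and to adapt the averaging-plus-bubble construction behind \Cref{lem:conforming-companion} (in the spirit of \cite{ErnZanotti2020}). Viewing the cellwise potential reconstruction $\PotRec_\ell v_\ell \in P_{k+1}(\Mcal_\ell;\R^m)$ as a broken polynomial on $\Tcal_\ell$ by restriction, its jump across any simplicial side interior to a cell $K \in \Mcal_\ell$ vanishes (as $\PotRec_\ell v_\ell|_K$ is one polynomial), whereas across a simplicial side contained in $S \in \Sigma_\ell(\Omega)$ it equals $[\PotRec_\ell v_\ell]_S$ restricted to that side. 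First I would apply a conforming averaging operator $A_\ell$ onto the continuous piecewise-$P_{k+1}$ space over $\Tcal_\ell$ (a subspace of $V$) and then add local element- and side-bubble corrections so that $\Jcal_\ell v_\ell \coloneqq A_\ell(\PotRec_\ell v_\ell) + (\text{correction})$ fulfils the polytopal moments \eqref{def:conforming-companion-sHHO}; this uses only that a polynomial of degree $\le k$ on a cell (resp.\ a polytopal side) restricts faithfully to the simplices (resp.\ simplicial sides) of the submesh, so that the cell bubbles restore $\Pi_{\Mcal_\ell}^k \Jcal_\ell v_\ell = v_{\Mcal_\ell}$ and the side bubbles restore $\Pi_{\Sigma_\ell}^k \Jcal_\ell v_\ell = v_{\Sigma_\ell}$. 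The operator $\Jcal_\ell$ is linear, and \eqref{def:conforming-companion-sHHO} makes it a right-inverse of the interpolation $\Ical_\ell$ from \Cref{sec:stabilization}.

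The key simplification, exactly as on simplices, is an orthogonality. Integration by parts in \eqref{def:gradient_reconstruction} (using that $\div \tau_k \in P_{k-1}(K;\R^m)$ for $\tau_k \in P_k(K;\M)$ and that $(\tau_k \nu_K)|_S \in P_k(S;\R^m)$) gives the commutativity $\GrRec_K \Ical_\ell w = \Pi_K^k \D w$ for $w \in V$, which combined with $\Ical_\ell \Jcal_\ell v_\ell = v_\ell$ yields $\GrRec_\ell v_\ell = \Pi_{\Mcal_\ell}^k \D_\pw \Jcal_\ell v_\ell$, hence $\GrRec_\ell v_\ell - \D \Jcal_\ell v_\ell \perp \Sigma(\Mcal_\ell)$ in $L^2(\Omega;\M)$. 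Since $\D \PotRec_\ell v_\ell|_K \in P_k(K;\M)$ and the $L^2$ projection onto $P_k(K;\M)$ is $L^p$ stable under \ref{assumption-mesh-1} \cite[Lemma~3.2]{DiPietroDroniou2017}, this reduces the estimate to $\|\GrRec_\ell v_\ell - \D \Jcal_\ell v_\ell\|_{L^p(K)} \lesssim \|\D_\pw(\PotRec_\ell v_\ell - \Jcal_\ell v_\ell)\|_{L^p(K)}$, so it suffices to bound the latter.

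Splitting $\PotRec_\ell v_\ell - \Jcal_\ell v_\ell = (\PotRec_\ell v_\ell - A_\ell \PotRec_\ell v_\ell) - (\text{correction})$ on each $K$, the averaging error, summed over the (boundedly many, by \ref{assumption-mesh-1} and \Cref{rem:equivalence-mesh-size}) simplices $T \subseteq K$, is $\lesssim \sum_{S \in \Sigma_\ell(\Omega),\, S \cap K \neq \emptyset} h_S^{1-p}\|[\PotRec_\ell v_\ell]_S\|_{L^p(S)}^p$ by the standard averaging estimate on $\Tcal_\ell$ and the vanishing of interior-cell jumps --- note that neighbour-cell data enter the bound only through these jumps. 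By construction, the $W^{1,p}(K)$-cost of the correction is controlled by the moment residuals $h_K^{-1}\|\Pi_K^k \PotRec_\ell v_\ell - v_K\|_{L^p(K)}$ and $h_S^{-1/p'}\|\Pi_S^k(\PotRec_\ell v_\ell|_K) - v_S\|_{L^p(S)}$, which by $L^p$ stability of the projections and $h_K \approx h_S$ are dominated by the last two sums in \eqref{ineq:conforming-companion-estimate-sHHO}; this proves \eqref{ineq:conforming-companion-estimate-sHHO}. The step I expect to be the main obstacle is precisely this adaptation of the Ern--Zanotti averaging and bubble corrections to the submesh while enforcing the \emph{polytopal} moments and checking that the corrections cost no more than the residuals in \eqref{ineq:conforming-companion-estimate-sHHO}, together with the $p \neq 2$ scaling throughout. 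Finally, $\|\D \Jcal_\ell v_\ell\|_{L^p(\Omega)} \lesssim \|v_\ell\|_\ell$ follows by summing \eqref{ineq:conforming-companion-estimate-sHHO} over $K \in \Mcal_\ell$, a reverse triangle inequality, the norm equivalence \Cref{lem:stabilization}.a, the bound \Cref{lem:stabilization}.c, and Poincar\'e and trace estimates that dominate the three reconstruction sums by $\|\GrRec_\ell v_\ell\|_{L^p(\Omega)}^p + \sfrak_\ell(v_\ell;v_\ell) \approx \|v_\ell\|_\ell^p$, with a constant $\Lambda_1$ depending only on $k$, $p$, and $\varrho$ in \ref{assumption-mesh-1}.
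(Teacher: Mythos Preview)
Your proposal is correct and follows essentially the same route as the paper: both invoke the Ern--Zanotti averaging-plus-bubble construction on the simplicial subtriangulation $\Tcal_\ell$ of \ref{assumption-mesh-1}, enforce the polytopal moments \eqref{def:conforming-companion-sHHO}, and then pass from the resulting simplicial-level bound to \eqref{ineq:conforming-companion-estimate-sHHO} via the $L^p$ stability of the $L^2$ projections and the mesh-size equivalences $h_T\approx h_K$, $h_F\approx h_S$. The paper compresses the construction into a citation of \cite[Section~5]{ErnZanotti2020} and states the intermediate bound \eqref{def:RHS-conforming-companion-polytopes} with the simplicial projections $\Pi_T^k,\Pi_F^k$ before dropping them, whereas you spell out the orthogonality $\GrRec_\ell v_\ell-\D\Jcal_\ell v_\ell\perp\Sigma(\Mcal_\ell)$ and the reduction to $\|\D(\PotRec_\ell v_\ell-\Jcal_\ell v_\ell)\|_{L^p(K)}$ explicitly; these are the same ingredients in slightly different packaging. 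For the stability $\|\D\Jcal_\ell v_\ell\|_{L^p(\Omega)}\lesssim\|v_\ell\|_\ell$, the paper again just cites \cite[Subsection~4.3]{ErnZanotti2020}, while your Poincar\'e/trace argument (using $\Pi_K^0(\PotRec_\ell v_\ell-v_K)=0$ from \eqref{def:potential-reconstruction-integral-mean} and $\|\D\PotRec_\ell v_\ell\|_{L^p(K)}\lesssim\|\GrRec_\ell v_\ell\|_{L^p(K)}$) is a valid direct verification of the same fact. One minor point you gloss over: the bubble corrections must restore the moments of $A_\ell\PotRec_\ell v_\ell$, not of $\PotRec_\ell v_\ell$, so the residual you write should in principle carry an extra $\Pi_K^k(A_\ell-1)\PotRec_\ell v_\ell$ term; this is harmless because it is controlled by the averaging error and hence by the jump sum already present in \eqref{ineq:conforming-companion-estimate-sHHO}.
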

	\begin{proof}
		The construction of the conforming operator $\Jcal_\ell$ on polytopal meshes in \cite[Section 5]{ErnZanotti2020} utilizes averaging and bubble-function techniques on the subtriangulation $\Tcal_\ell$ and give rise an upper bound of $\|\GrRec_\ell v_\ell - \D \Jcal_\ell v_\ell\|_{L^p(K)}^p$, namely
		\begin{align}
			\begin{split}
				\sum_{S \in \Sigma_\ell(\Omega), S \cap K \neq \emptyset} h_S^{1-p}\|[\PotRec_\ell v_\ell]_S\|_{L^p(S)}^p + \sum_{T \in \Tcal_\ell, T \subset K} h_T^{-p}\|\Pi_T^k(\PotRec_\ell v_\ell - v_K)\|^p_{L^p(T)}\\
				+ \sum_{S \in \Sigma_\ell(K)} \sum_{F \in \Fcal_\ell, F \subset S} h_F^{1-p} \|\Pi_{F}((\PotRec_\ell v_\ell)|_K - v_S)\|^p_{L^p(F)}.
			\end{split}
			\label{def:RHS-conforming-companion-polytopes}
		\end{align}
		Since the $L^2$ projection $\Pi_T^k$ (resp.~$\Pi_F^k$) is stable in $L^p(T;\R^m)$ (resp.~$L^p(F;\R^m)$) \cite[Lemma 3.2]{DiPietroDroniou2017}, it can be omitted in \eqref{def:RHS-conforming-companion-polytopes}. This, the equivalence $h_T \approx h_K$ for all $K \in \Mcal_\ell$, $T \in \Tcal_\ell$ with $T \subset K$ from \ref{assumption-mesh-1}, and $h_F \approx h_S$ for all $S \in \Sigma_\ell$, $F \in \Fcal_\ell$ with $F \subset S$ from \ref{assumption-mesh-1} and \Cref{rem:equivalence-mesh-size} show \eqref{ineq:conforming-companion-estimate-sHHO}.
		This implies the stability $\|\D \Jcal_\ell v_\ell\|_{L^p(\Omega)} \lesssim \|v_\ell\|_\ell$, cf., e.g., \cite[Subsection 4.3]{ErnZanotti2020} for more details. Notice that the computation of the right-hand side of \eqref{ineq:conforming-companion-estimate-sHHO} does \emph{not} require explicit information on the subtriangulation $\Tcal_\ell$.
	\end{proof}
	\begin{remark}[discrete compactness]\label{rem:discrete-compactness-sHHO}
		The discrete compactness from \Cref{thm:conv_analysis:discrete_compactness} holds verbatim with $\Tcal_\ell$ (resp.~$\Fcal_\ell$) replaced by $\Mcal_\ell$ (resp.~$\Sigma_\ell$). Notice that $\GrRec_\ell$ from \Cref{sec:stabilization} and $\Jcal_\ell$ from \Cref{lem:conforming-companion-sHHO} in this section are different objects. With adapted notation, all arguments from the proof of \Cref{thm:conv_analysis:discrete_compactness} apply verbatim.
		Indeed, the commutativity $\Pi_{\Sigma(\Mcal_\ell)} \D v = \GrRec_\ell \Ical_\ell v$ for all $v \in V$ from \Cref{lem:stability_gradient_reconstruction}.b remains valid \cite{DiPietroDroniou2017,AbbasErnPignet2018}. This and \eqref{def:conforming-companion-sHHO} imply the $L^2$ orthogonality $\GrRec_\ell v_\ell - \D \Jcal_\ell v_\ell \perp \Sigma(\Mcal_\ell)$ for any $v_\ell \in V(\Mcal_\ell)$. This is the key argument in the proof of \Cref{thm:conv_analysis:discrete_compactness} and provides a positive power of the mesh-size in \eqref{eq:conv_analysis:discrete_compactness_assumption}.
	\end{remark}
	\subsection{Proof of \Cref{thm:plain-convergence-sHHO}}
	Given any $K \in \Mcal_\ell$, \Cref{lem:conforming-companion-sHHO} motivates the refinement indicator
	\begin{align*}
		\eta_\ell^{(\varepsilon)}(K) &\coloneqq \mu_\ell^{(\varepsilon)}(K) + |K|^{\varepsilon p'/n}\|\sigma_\ell - \D W(\GrRec_\ell u_\ell)\|_{L^{p'}(K)}^{p'}\\
		&\quad + |K|^{p'/n}\|(1 - \Pi_K^k) f\|_{L^{p'}(K)}^{p'} + |K|^{1/n}\sum_{S \in \Sigma_\ell(K) \cap \Sigma_\ell(\Gamma_{\mathrm{N}})}\|(1 - \Pi_S^k) g\|_{L^{p'}(S)}^{p'}
	\end{align*}
	with $\begin{aligned}[t]
		&\mu_\ell^{(\varepsilon)}(K) \coloneqq |K|^{(\varepsilon p - p)/n} \|\PotRec_\ell u_\ell - u_K\|_{L^p(K)}^p\\
		&~ + |K|^{(\varepsilon p + 1 - p)/n} \Big(\sum_{S \in \Sigma_\ell(K)\cap \Sigma_\ell(\Gamma_{\mathrm{D}})} \|\PotRec_\ell u_\ell - u_\D\|_{L^p(S)}^p\\
		&~ +  \sum_{S \in \Sigma_\ell(K)\cap \Sigma_\ell(\Omega)} \|[\PotRec_\ell u_\ell]_S\|_{L^p(S)}^p + \sum_{S \in \Sigma_\ell(K)}  \|(\PotRec_\ell u_\ell)|_K - u_S\|_{L^p(S)}^p\Big).
	\end{aligned}$\\
 	The remaining parts of this section are devoted to the proof of the convergence results in \Cref{thm:plain-convergence-sHHO}.
	\begin{proof}[Proof of \Cref{thm:plain-convergence-sHHO}]
		The proof follows that of \Cref{thm:plain-convergence}.
		\begin{enumerate}[wide]
			\item[\emph{Step 1 establishes $\lim_{\ell \to \infty} \eta_\ell^{(\varepsilon)} = 0$.}] The key argument from Step 1 of the proof of \Cref{thm:plain-convergence} is the positive power of the mesh size in $\eta_\ell^{(\varepsilon)}$ in the sense that
			\begin{align*}
				\eta_\ell^{(\varepsilon)}(\Mcal_\ell\setminus\Mcal_{\ell+1}) &\lesssim \|h_\ell\|_{L^\infty(\Omega_\ell)}^{\varepsilon p}(\|u_\ell\|_\ell^p + \|\D u_\mathrm{D}\|_{L^p(\Omega)}^p) + \|h_\ell\|_{L^\infty(\Omega_\ell)}^{\varepsilon p'} \|\D W(\GrRec_\ell u_\ell)\|_{L^{p'}(\Omega)}^{p'}\\
				&\qquad + \|h_\ell\|_{L^{\infty}(\Omega_\ell)}^{p'}\|f\|_{L^{p'}(\Omega)}^{p'} + \|h_\ell\|_{L^\infty(\Omega_\ell)}\|g\|_{L^{p'}(\Gamma_{\mathrm{N}})}^{p'}.
			\end{align*}
			Hence $\lim_{\ell \to \infty} \|h_\ell\|_{L^\infty(\Omega_\ell)} = 0$ from \Cref{rem:mesh-size-reduction} implies $\lim_{\ell \to \infty} \eta_\ell^{(\varepsilon)}(\Mcal_\ell\setminus\Mcal_{\ell+1}) = 0$. This and the Dörfler marking in \eqref{ineq:Doerfler-marking} conclude $\lim_{\ell \to \infty} \eta_\ell^{(\varepsilon)} = 0$.
			\item[\emph{Step 2 provides a LEB with the extra stabilization term $\sfrak_{\ell}(u_\ell;\Ical_\ell u)$, namely}]
			\begin{align}
				\mathrm{LEB}_\ell &\coloneqq E_\ell(u_\ell) + \int_\Omega (1 - \Pi_{\Mcal_\ell}^k) \D W(\GrRec_\ell u_\ell) : \D u \d{x} - \sfrak_\ell(u_\ell;\Ical_\ell u)\label{ineq:LEB-sHHO}\\
				&\quad - \cnstL{cnst:oscillation}\big(\osc(f,\Mcal_\ell) + \osc_{\mathrm{N}}(g,\Sigma_{\ell}(\Gamma_{\mathrm{N}}))\big) \leq E(u) - \sfrak_\ell(u_\ell;u_\ell)/p' \leq E(u).\nonumber
			\end{align}
			The commutativity $\Pi_{\Sigma(\Mcal_\ell)} \D u = \GrRec_\ell \Ical_\ell u$ from \Cref{lem:stability_gradient_reconstruction}.b and the discrete Euler-Lagrange equations \eqref{eq:dELE-sHHO} show that
			\begin{align}
				\begin{split}
					\int_\Omega \sigma_\ell : (\D u - \GrRec_\ell u_\ell) \d{x} &= \int_\Omega f \cdot (\Pi_{\Mcal_\ell}^k u - u_{\Mcal_\ell}) \d{x}\\
					&\quad \int_{\Gamma_\mathrm{N}} g \cdot (\Pi_{\Sigma_\ell}^k u - u_{\Sigma_\ell}) \d{x} - \sfrak_\ell(u_\ell; \Ical_\ell u - u_\ell).
				\end{split}
				\label{eq:dELE-sHHO-LEB}
			\end{align}
			This, \eqref{ineq:convexity}, and \eqref{ineq:oscillation} (with adapted notation) conclude the proof of \eqref{ineq:LEB-sHHO}.
			\item[\emph{Step 3 establishes $\lim_{\ell \to \infty} E_\ell(u_\ell) = E(u)$}.] 	Notice from \eqref{ineq:conforming-companion-estimate-sHHO} that $\eta_\ell^{(\varepsilon)}$ is an upper bound for $\mu_\ell(u_\ell)$ in \eqref{eq:conv_analysis:discrete_compactness_assumption}.
			Hence the discrete compactness (from \Cref{rem:discrete-compactness-sHHO}) implies the existence of a (not relabelled) subsequence of $(u_\ell)_{\ell \in \N_0}$ and a weak limit $v \in \Acal$ such that $\Jcal_{\ell} u_\ell \rightharpoonup v$ weakly in $V$ and $\GrRec_\ell u_\ell \rightharpoonup \D v$ weakly in $L^p(\Omega;\M)$ as $\ell \to \infty$. The only difference between the LEB in \eqref{ineq:LEB-sHHO} and that in \eqref{ineq:LEB-II} for simplicial meshes is the additional term $\sfrak_{\ell}(u_\ell;\Ical u)$ in this proof. 
			\begin{lemma}[convergence of $\sfrak_{\ell}(u_\ell;\Ical u)$]\label{lem:convergence-stabilization}
				Given a sequence $(u_\ell)_{\ell \in \N_0}$ with $u_\ell \in V(\Mcal_\ell)$ for all $\ell \in \N_0$, suppose that $\sfrak_\ell(u_\ell;u_\ell) \leq \newcnstL\label{cnst:stabilization}$ for a universal constant $\cnstL{cnst:stabilization}$ independent of the level $\ell$ and $\lim_{\ell \to \infty} \eta_\ell^{(\varepsilon)} = 0$ with $\varepsilon \leq \min\{k+1,(k+1)/(p-1)\}$. Then
				\begin{align}
					\lim_{\ell \to \infty} \sfrak_\ell(u_\ell;\Ical u) = 0.
					\label{eq:lim-stabilization}
				\end{align}
			\end{lemma}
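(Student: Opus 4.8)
The plan is to reduce to a \emph{smooth} test function by density and then to play the restriction $\varepsilon\le (k+1)/(p-1)$ off against the approximation order of the interpolation $\Ical_\ell$. The starting point is that $\sfrak_\ell(u_\ell;\bullet)$ is linear and that \Cref{lem:stabilization}.d, summed over $K\in\Mcal_\ell$ together with a discrete H\"older inequality, yields $|\sfrak_\ell(u_\ell;w_\ell)|\le \sfrak_\ell(u_\ell;u_\ell)^{1/p'}\sfrak_\ell(w_\ell;w_\ell)^{1/p}$ for all $w_\ell\in V(\Mcal_\ell)$. Since $\Omega$ is a bounded Lipschitz domain, $C^\infty(\overline\Omega;\R^m)$ is dense in $V$; for $u_\delta\in C^\infty(\overline\Omega;\R^m)$ with $\|\D(u-u_\delta)\|_{L^p(\Omega)}\le\delta$, \Cref{lem:stabilization}.b with $\varphi_h=0$ gives $\sfrak_\ell(\Ical_\ell(u-u_\delta);\Ical_\ell(u-u_\delta))\lesssim\|\D(u-u_\delta)\|_{L^p(\Omega)}^p\le\delta^p$, so that the linearity $\sfrak_\ell(u_\ell;\Ical_\ell u)=\sfrak_\ell(u_\ell;\Ical_\ell u_\delta)+\sfrak_\ell(u_\ell;\Ical_\ell(u-u_\delta))$ and $\sfrak_\ell(u_\ell;u_\ell)\le\cnstL{cnst:stabilization}$ reduce the assertion \eqref{eq:lim-stabilization} to $\lim_{\ell\to\infty}\sfrak_\ell(u_\ell;\Ical_\ell u_\delta)=0$ for smooth $u_\delta$.

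The key local estimate is $\sfrak_K(u_\ell;u_\ell)\lesssim h_K^{-\varepsilon p}\mu_\ell^{(\varepsilon)}(K)$ for all $K\in\Mcal_\ell$. This follows from \Cref{lem:stabilization}.c, the $L^p$ stability of $\Pi_K^k$ and $\Pi_S^k$ \cite[Lemma 3.2]{DiPietroDroniou2017}, the equivalence $h_S\approx h_K$ from \ref{assumption-mesh-1} and \Cref{rem:equivalence-mesh-size}, and the definition of $\mu_\ell^{(\varepsilon)}(K)$: after multiplying the upper bound of \Cref{lem:stabilization}.c by $h_K^{\varepsilon p}=|K|^{\varepsilon p/n}$, the resulting summands $h_K^{\varepsilon p-p}\|\PotRec_\ell u_\ell-u_K\|_{L^p(K)}^p$ and $h_K^{\varepsilon p+1-p}\sum_{S\in\Sigma_\ell(K)}\|(\PotRec_\ell u_\ell)|_K-u_S\|_{L^p(S)}^p$ are, up to a multiplicative constant, dominated by the corresponding contributions of $\mu_\ell^{(\varepsilon)}(K)$, all further terms of $\mu_\ell^{(\varepsilon)}(K)$ being nonnegative.

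For smooth $u_\delta$, combining \Cref{lem:stabilization}.d cellwise with the ``in particular'' clause of \Cref{lem:stabilization}.b, namely $\sfrak_K(\Ical_\ell u_\delta;\Ical_\ell u_\delta)^{1/p}\lesssim h_K^{k+1}|u_\delta|_{W^{k+2,p}(K)}$, and with the key local estimate gives
\begin{align*}
	|\sfrak_\ell(u_\ell;\Ical_\ell u_\delta)|
	&\le \sum_{K\in\Mcal_\ell}\sfrak_K(u_\ell;u_\ell)^{1/p'}\,\sfrak_K(\Ical_\ell u_\delta;\Ical_\ell u_\delta)^{1/p}\\
	&\lesssim \sum_{K\in\Mcal_\ell} h_K^{k+1-\varepsilon(p-1)}\,|u_\delta|_{W^{k+2,p}(K)}\,\mu_\ell^{(\varepsilon)}(K)^{1/p'}.
\end{align*}
The hypothesis $\varepsilon\le(k+1)/(p-1)$ makes $k+1-\varepsilon(p-1)$ nonnegative, hence $h_K^{k+1-\varepsilon(p-1)}\le\operatorname{diam}(\Omega)^{k+1-\varepsilon(p-1)}\lesssim1$; a discrete H\"older inequality over $K\in\Mcal_\ell$ with exponents $p$ and $p'$, the additivity $\sum_{K\in\Mcal_\ell}|u_\delta|_{W^{k+2,p}(K)}^p=|u_\delta|_{W^{k+2,p}(\Omega)}^p$, and $\sum_{K\in\Mcal_\ell}\mu_\ell^{(\varepsilon)}(K)\le\sum_{K\in\Mcal_\ell}\eta_\ell^{(\varepsilon)}(K)=\eta_\ell^{(\varepsilon)}$ then yield $|\sfrak_\ell(u_\ell;\Ical_\ell u_\delta)|\lesssim|u_\delta|_{W^{k+2,p}(\Omega)}\,(\eta_\ell^{(\varepsilon)})^{1/p'}$. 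Since $\lim_{\ell\to\infty}\eta_\ell^{(\varepsilon)}=0$ by assumption, $\sfrak_\ell(u_\ell;\Ical_\ell u_\delta)\to0$; inserting this into the reduction of the first paragraph and letting $\delta\downarrow0$ finishes the proof.

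The step that requires the most care is the very first reduction: one must resist bounding $\sfrak_\ell(u_\ell;\Ical_\ell u)$ directly by $\sfrak_\ell(u_\ell;u_\ell)^{1/p'}\sfrak_\ell(\Ical_\ell u;\Ical_\ell u)^{1/p}$, because $\sfrak_\ell(\Ical_\ell u;\Ical_\ell u)$ need \emph{not} tend to zero --- the adaptive meshes are not refined everywhere, and for a merely $W^{1,p}$ function $u$ the interpolation error stays bounded away from zero on the never-refined part of $\Omega$. The argument circumvents this by keeping the Cauchy--Schwarz coupling \emph{cellwise}: the negative power $h_K^{-\varepsilon p}$ appearing in $\sfrak_K(u_\ell;u_\ell)$ is absorbed by the positive power $h_K^{(k+1)p}$ coming from the approximation of the \emph{smoothed} test function, which is possible precisely when $\varepsilon\le(k+1)/(p-1)$ --- exactly the additional restriction on $\varepsilon$ imposed in \Cref{thm:plain-convergence-sHHO}.
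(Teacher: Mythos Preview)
Your proof is correct and follows essentially the same route as the paper: a density reduction to smooth test functions, the cellwise H\"older inequality from \Cref{lem:stabilization}.d combined with the interpolation estimate \Cref{lem:stabilization}.b, and the local control $\sfrak_K(u_\ell;u_\ell)\lesssim h_K^{-\varepsilon p}\mu_\ell^{(\varepsilon)}(K)$ via \Cref{lem:stabilization}.c. The only cosmetic difference is the order of operations---you insert the bound $\sfrak_K(u_\ell;u_\ell)\lesssim h_K^{-\varepsilon p}\mu_\ell^{(\varepsilon)}(K)$ \emph{before} the discrete H\"older step, whereas the paper first applies H\"older to obtain $\big(\sum_K |K|^{(k+1)p'/n}\sfrak_K(u_\ell;u_\ell)\big)^{1/p'}|\varphi|_{W^{k+2,p}(\Omega)}$ and only then invokes \Cref{lem:stabilization}.c; the resulting estimates are equivalent.
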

			\begin{proof}[Proof of \Cref{lem:convergence-stabilization}]
				The proof of \eqref{eq:lim-stabilization} first establishes this for smooth functions.
				Given any $\varphi \in C^\infty(\overline{\Omega};\R^m)$, the H\"older inequality from \Cref{lem:stabilization}.d, $h_K \approx |K|^{1/n}$ from \Cref{rem:equivalence-mesh-size}, and the interpolation error from \Cref{lem:stabilization}.b prove
				\begin{align}
					\begin{split}
						|\sfrak_{\ell}(u_\ell;\Ical_\ell \varphi)| &\leq \sum_{K \in \Mcal_\ell} \sfrak_{K}(u_\ell;u_\ell)^{1/p'}\sfrak_{K}(\Ical_\ell \varphi;\Ical_\ell\varphi)^{1/p}\\
						& \lesssim \Big(\sum_{K \in \Mcal_\ell} 	|K|^{(k+1)p'/n}\sfrak_{K}(u_\ell;u_\ell)\Big)^{1/p'} |\varphi|_{W^{k+2,p}(\Omega)}.
					\end{split}
					\label{eq:limit_stabilization_smooth}
				\end{align}
				\Cref{lem:stabilization}.c implies that $\eta_\ell^{(\varepsilon)}$ controls the stabilization in the sense that
				\begin{align}
					\sum_{K \in \Mcal_\ell} |K|^{(k+1)p'/n}\sfrak_{K}(u_\ell;u_\ell) \lesssim \|h_\ell\|_{L^\infty(\Omega)}^{((k+1)p' - \varepsilon p)/n}\eta_\ell^{(\varepsilon)}.
					\label{ineq:convergence-stabilization-control}
				\end{align}
				The restriction $\varepsilon \leq (k+1)/(p-1)$ provides $(k+1)p' - \varepsilon p > 0$. Hence $\lim_{\ell \to \infty} \eta_\ell^{(\varepsilon)} = 0$ implies that the right-hand side of \eqref{ineq:convergence-stabilization-control} vanishes in the limit as $\ell \to \infty$. This and \eqref{eq:limit_stabilization_smooth}--\eqref{ineq:convergence-stabilization-control} lead to $\lim_{\ell \to \infty} \sfrak_{\ell}(u_\ell;\Ical_\ell \varphi) = 0$ for all $\varphi \in C^\infty(\overline{\Omega};\R^m)$.
				Given any $\delta > 0$, let $\varphi \in C^\infty(\overline{\Omega};\R^m)$ such that $\|\D(u - \varphi)\|_{L^p(\Omega)} \leq \delta$.
				The interpolation error from \Cref{lem:stabilization}.b proves $\sfrak_\ell(\Ical_\ell (u - \varphi); \Ical_\ell (u - \varphi)) \leq \newcnstL^p\label{cnst:sHHO:plain_convergence_stability} \|{\D (u - \varphi)}\|_{L^p(\Omega)}^p \leq \cnstL{cnst:sHHO:plain_convergence_stability}^p \delta^p$ with a universal constant $\cnstL{cnst:sHHO:plain_convergence_stability} > 0$. The convergence $\lim_{\ell \to \infty} \sfrak_{\ell}(u_\ell;\Ical_\ell \varphi) = 0$ implies the existence of $N \in \N_0$ with $|\sfrak_\ell(u_\ell;\Ical_\ell \varphi)| \leq \delta$ for all $\ell \geq N$. This, a triangle inequality, a H\"older inequality, and the bound $\sfrak_\ell(u_\ell;u_\ell) \leq \cnstL{cnst:stabilization}$ (by assumption) verify
				\begin{align*}
					|\sfrak_\ell(u_\ell; \Ical_\ell u)| &\leq |\sfrak_\ell(u_\ell;\Ical_\ell \varphi)| + |\sfrak_\ell(u_\ell; \Ical_\ell (u - \varphi))| \leq |\sfrak_\ell(u_\ell;\Ical_\ell \varphi)|\\
					&\qquad + \sfrak_\ell(u_\ell;u_\ell)^{1/p'}\sfrak_\ell(\Ical_\ell (u - \varphi); \Ical_\ell (u - \varphi))^{1/p} \leq (1 + \cnstL{cnst:stabilization}^{1/p'}\cnstL{cnst:sHHO:plain_convergence_stability}) \delta.
				\end{align*}
				This concludes the proof of $\lim_{\ell \to \infty} \sfrak_\ell(u_\ell;\Ical u) = 0$ in \eqref{eq:lim-stabilization}.
			\end{proof}
			\noindent We return to proof of \Cref{thm:plain-convergence-sHHO} and recall $\sfrak_\ell(u_\ell;u_\ell) \leq \cnstL{cnst:G-u-h-sHHO}^p$ from \Cref{lem:existence-discrete-solutions-sHHO} and $\lim_{\ell \to \infty} \eta_\ell^{(\varepsilon)} = 0$ from Step 1.
			Hence \Cref{lem:convergence-stabilization} applies and \eqref{eq:lim-stabilization} follows.
			With this additional argument \eqref{eq:lim-stabilization} and the remaining conclusions, that lead to \eqref{ineq:conv-analysis-liminf} in the proof of \Cref{thm:plain-convergence}, $E(u) \leq E(v) \leq \liminf_{\ell \to \infty} \mathrm{LEB}_\ell \leq E(u)$ follows for the weak limit $v$.
			This implies $\lim_{\ell \to \infty} E_\ell(u_\ell) = \lim_{\ell \to \infty} \mathrm{LEB}_\ell = E(u)$.
			Since $\sfrak_\ell(u_\ell;u_\ell)/p' \leq E(u) - \mathrm{LEB}_\ell$ from \eqref{ineq:LEB-sHHO}, $\sfrak_\ell(u_\ell;u_\ell)$ vanishes in the limit as $\ell \to \infty$. If $W$ satisfies \eqref{ineq:cc-primal}, then the choice $\varrho \coloneqq \D u$ and $\xi \coloneqq \GrRec_\ell u_\ell$ in \eqref{ineq:cc-primal-integrated}, \eqref{eq:dELE-sHHO-LEB} and the data oscillations from \eqref{ineq:oscillation} imply
			\begin{align}
				\cnstL{cnst:a-posteriori-case-I-sHHO}^{-1}\|\D u - \GrRec_\ell u_\ell\|_{L^p(\Omega)}^r + \sfrak_\ell(u_\ell;u_\ell)/p' \leq E(u) - \mathrm{LEB}_\ell
				\label{ineq:a-posteriori-case-I-sHHO}
			\end{align}
			with the constant $\newcnstL\label{cnst:a-posteriori-case-I-sHHO} \coloneqq 3\cnstS{cnst:cc-primal}(|\Omega| + \cnstL{cnst:Du}^p + \cnstL{cnst:G-u-h-sHHO}^p)^{t/t'}$ and $r,t$ from \Cref{tab:parameters}. This shows $\lim_{\ell \to \infty} \GrRec_\ell u_\ell = \D u$ (strongly) in $L^p(\Omega;\M)$. If $W$ satisfies \eqref{ineq:cc-stress}, then \eqref{ineq:cc-dual-integrated} holds and 
			$\cnstL{cnst:a-posteriori-case-I-sHHO}^{-1}\|\D u - \GrRec_\ell u_\ell\|_{L^p(\Omega)}^r$ on the left-hand side of \eqref{ineq:a-posteriori-case-I-sHHO} can be replaced by $\cnstL{cnst:a-posteriori-case-II-sHHO}^{-1}\|\sigma - \D W(\GrRec_\ell u_\ell)\|_{L^{p'}(\Omega)}^{\widetilde{r}}$ with $\newcnstL\label{cnst:a-posteriori-case-II-sHHO} \coloneqq 3\cnstS{cnst:cc-stress}(|\Omega| + \cnstL{cnst:Du}^p + \cnstL{cnst:G-u-h-sHHO}^p)^{\widetilde{t}/\widetilde{t}'}$ and $\widetilde{r},\widetilde{t}$ from \Cref{tab:parameters}. Hence $\lim_{\ell \to \infty} \D W(\GrRec_\ell u_\ell) = \sigma$ (strongly) in $L^{p'}(\Omega;\M)$.\qedhere
		\end{enumerate}
	\end{proof}
	\section{Numerical examples}\label{sec:numerical-examples}
	Some remarks on the implementation precede the numerical benchmarks for the three examples of \Cref{sec:examples} and the experiments in the Foss-Hrusa-Mizel example with the Lavrentiev gap in \Cref{sec:num-ex:FHM}.
	
	\subsection{Implementation}
%	The discrete Euler-Lagrange equations \eqref{eq:dELE} have been realized with an iterative solver \texttt{fminunc} from the MATLAB standard library in an extension of the data structures and the short MATLAB programs in \cite{AlbertyCFunken1999,CBrenner2017}. The first and (piecewise) second derivatives of $W$ have been provided for the trust-region quasi-Newton scheme with parameters of \texttt{fminunc} set to
	
	The realization in MATLAB follows that of \cite[Subsubsetion 5.1.1]{CarstensenTran2020} with the parameters
	$\texttt{FunctionTolerance} = \texttt{OptimalityTolerance} = \texttt{StepTolerance} = 10^{-15}$ and $\texttt{MaxIterations} = \texttt{Inf}$ for improved accuracy.
	
	The class of minimization problems at hand allows, in general, for multiple exact and discrete solutions. The numerical experiments select one (of those) by the approximation in \texttt{fminunc} with the initial value computed as follows. On the coarse initial triangulations $\Tcal_0$, the initial value $v_0 = (v_{\Tcal_0},v_{\Fcal_0}) \in V(\Tcal_0)$ is defined by $v_{\Tcal_0} \equiv 1$, $v_{\Fcal_0}|_F \equiv 1$ on any $F \in \Fcal_0(\Omega)$, and $v_{\Fcal_0}|_F = \Pi_F^k u_\mathrm{D}$ for all $F \in \Fcal_0(\Gamma_{\mathrm{D}})$. On each refinement $\Tcal_{\ell+1}$ of some triangulation $\Tcal_\ell$, the initial approximation is defined by a prolongation of the output $u_\ell$ of the call \texttt{fminunc} on the coarse triangulation $\Tcal_\ell$. The prolongation maps $u_\ell$ onto $v_{\ell+1} \coloneqq \Ical_{\ell+1} \Jcal_{\ell} u_\ell \in V(\Tcal_{\ell+1})$.
	
	The numerical integration of polynomials is exact with the quadrature formula in \cite{HammerStroud1956}: For non-polynomial functions such as $W(\GrRec_\ell v_\ell)$ with $v_\ell \in V(\Tcal_\ell)$, the number of chosen quadrature points allows for exact integration of polynomials of order $p(k+1)$ with the growth $p$ of $W$ and the polynomial order $k$ of the discretization; the same quadrature formula also applies to the integration of the dual energy density $W^*$ in \eqref{def:dual-energy}. The implementation is based on the in-house AFEM software package in MATLAB \cite{AlbertyCFunken1999,CBrenner2017}.
	Adaptive computations are carried out with $\theta = 0.5$, $\varepsilon = (k+1)/100$, and the polynomial degrees $k$ from \Cref{fig:legend}. Undisplayed computer experiments suggest only marginal influence of the choice of $\varepsilon$ on the convergence rates of the errors.
	
	The uniform or adaptive mesh-refinement leads to
	convergence history plots of the energy error $|E(u) - E_\ell(u_\ell)|$ or the stress error $\|\sigma - \nabla W(\GrRec_\ell u_\ell)\|_{L^{p'}(\Omega)}^2$ plotted against the number of degrees of freedom (ndof) in \Cref{fig:convergence-energy-p-Laplace}--\Cref{fig:convergence-energy-FHM} below. (Recall the scaling $\mathrm{ndof} \propto h_{\max}^2$ in 2D for uniform mesh refinements with maximal mesh size $h_{\max}$ in a log-log plot.)
	In the numerical experiments without a priori knowledge of $u$, the reference value displayed for $\min E(\Acal)$ stems from an Aitken extrapolation of the numerical results for a sequence of uniformly refined triangulations.
	\begin{figure}[h!]
		\centering
		\includegraphics{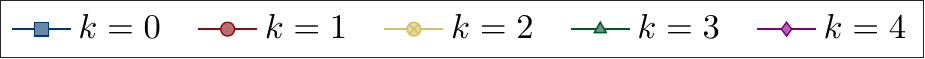}
		\caption{Polynomial degrees $k = 0,\dots,4$ in the numerical benchmarks of \Cref{sec:numerical-examples}}
		\label{fig:legend}
	\end{figure}
	\subsection{The $p$-Laplace equation}\label{sec:num-ex:p-Laplace}
	\begin{figure}[ht!]
		\begin{minipage}[t]{0.49\textwidth}
			\centering
			\includegraphics[scale=0.87]{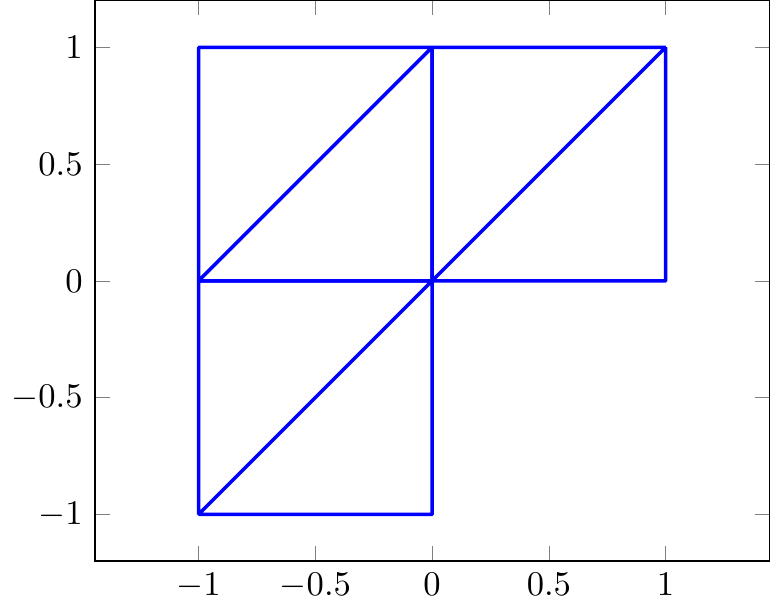}
		\end{minipage}\hfill
		\begin{minipage}[t]{0.49\textwidth}
			\centering
			\includegraphics[scale=0.87]{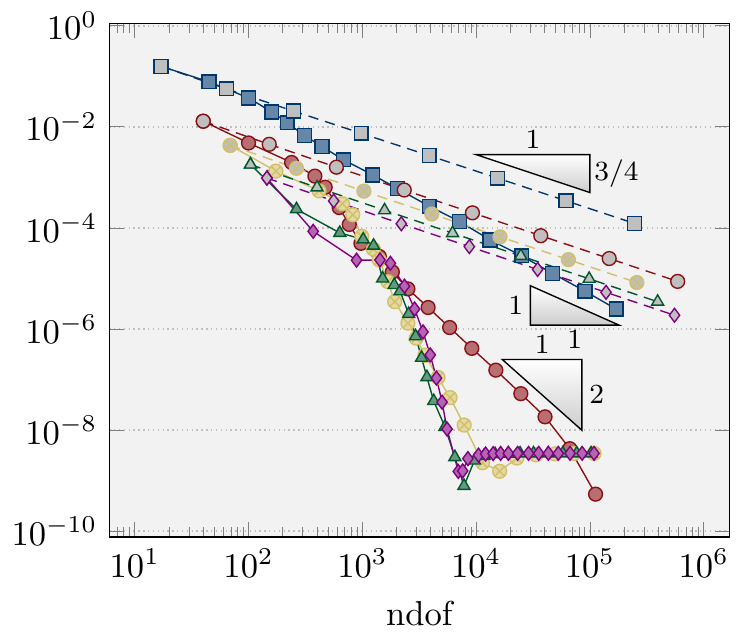}
		\end{minipage}\hfill
		\caption{Initial triangulation $\Tcal_0$ (left) of the L-shaped domain and convergence history plot (right) of $|E(u) - E_\ell(u_\ell)|$ with $k$ from \Cref{fig:legend} on uniform (dashed line) and adaptive (solid line) triangulations for the $p$-Laplace benchmark in \Cref{sec:num-ex:p-Laplace}}
		\label{fig:convergence-energy-p-Laplace}
	\end{figure}
	\begin{figure}[ht!]
		\begin{minipage}[t]{0.49\textwidth}
			\centering
			\includegraphics[scale=0.87]{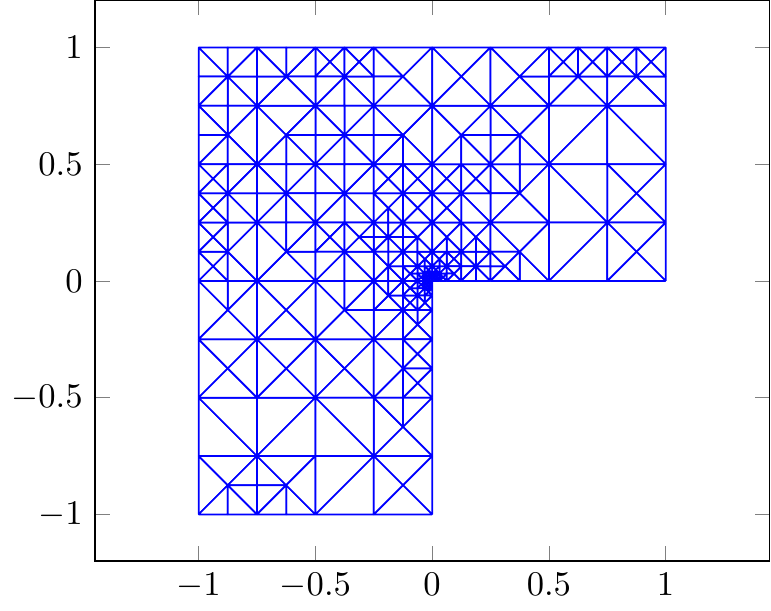}
		\end{minipage}\hfill
		\begin{minipage}[t]{0.49\textwidth}
			\centering
			\includegraphics[scale=0.87]{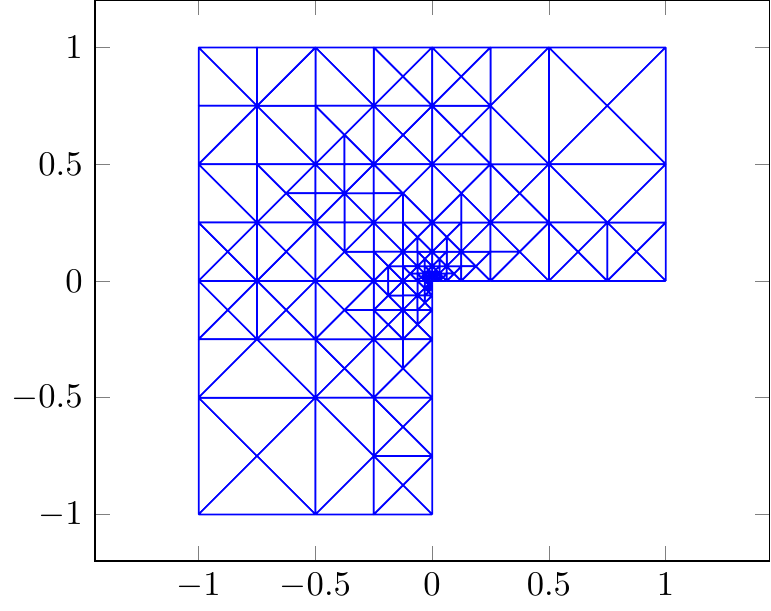}
		\end{minipage}\hfill
		\caption{Adaptive triangulations of the L-shaped domain into 492 triangles (1238 dofs) for $k = 0$ (left) and 490 triangles (7824 dofs) for $k = 3$ (right) for the $p$-Laplace benchmark in \Cref{sec:num-ex:p-Laplace}}
		\label{fig:adaptive-triangulation-p-Laplace}
	\end{figure}
	\begin{figure}[ht!]
		\begin{minipage}[t]{0.49\textwidth}
			\centering
			\includegraphics[scale=0.87]{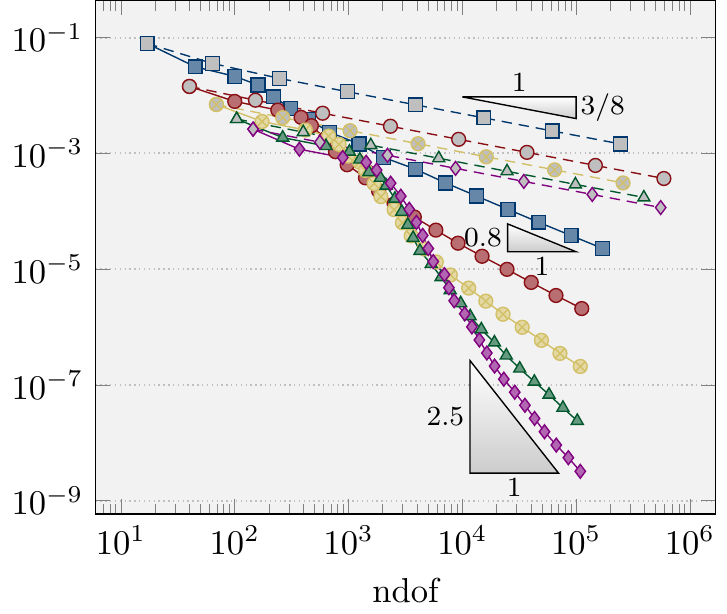}
		\end{minipage}\hfill
		\begin{minipage}[t]{0.49\textwidth}
			\centering
			\includegraphics[scale=0.87]{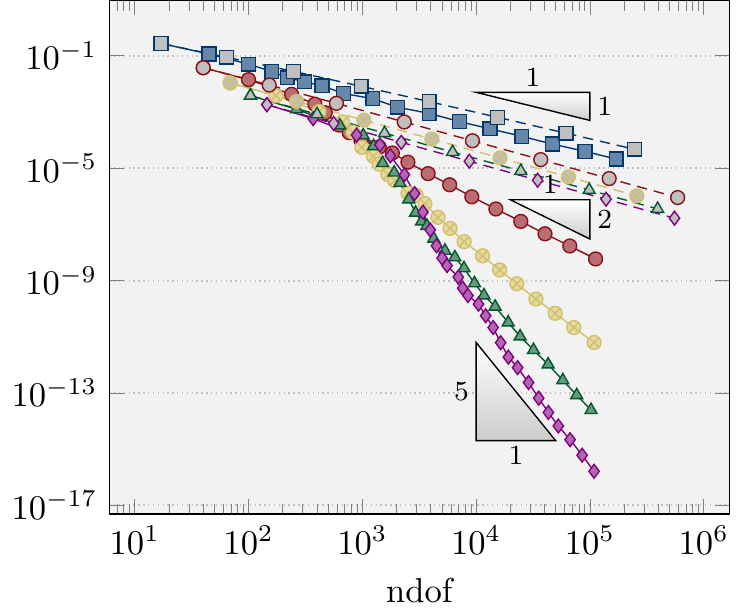}
		\end{minipage}\hfill
		\caption{Convergence history plot of $\|\nabla u - \GrRec_\ell u_\ell\|_{L^4(\Omega)}^2$ (left) and $\|\sigma - \nabla W(\GrRec_\ell u_\ell)\|_{L^{4/3}(\Omega)}^2$ (right) with $k$ from \Cref{fig:legend} on uniform (dashed line) and adaptive (solid line) triangulations for the $p$-Laplace benchmark in \Cref{sec:num-ex:p-Laplace}}
		\label{fig:convergence-displacement-stress-p-Laplace}
	\end{figure}
	The third numerical benchmark from \cite[Section 6]{CKlose2003} for the $p$-Laplace problem in \Cref{ex:pLaplace} considers $p = 4$, the right-hand side
	\begin{align*}
		f(r,\varphi) \coloneqq 343/2048 r^{-11/8} \sin(7\varphi/8),
	\end{align*}
	on the L-shaped domain $\Omega \coloneqq (-1,1)^2\setminus([0,1)\times(-1,0])$ with the initial triangulation $\Tcal_0$ displayed in \Cref{fig:convergence-energy-p-Laplace}.a, the Dirichlet boundary data $u_\mathrm{D}(r,\varphi) \coloneqq r^{7/8}\sin(7\varphi/8)$
	$\Gamma_{\mathrm{D}} \coloneqq ({0} \times [-1,0]) \cup ([0,1] \times {0})$, and the Neumann boundary data
	\begin{align*}
		g(r,\varphi) \coloneqq 343/512 r^{-3/8} (-\sin(\varphi/8), \cos(\varphi/8)) \cdot \nu
	\end{align*}
	in polar coordinates with the outer normal unit vector $\nu$ on $\Gamma_{\mathrm{N}} \coloneqq \partial \Omega\setminus \Gamma_{\mathrm{D}}$.
	The minimal energy $\min E(\Acal) = -1.4423089582447$ is attained at the unique minimizer
	\begin{align*}
		u(r,\varphi) \coloneqq r^{7/8}\sin(7\varphi/8).
	\end{align*}
	Since $u$ is singular at the origin, reduced convergence rates are expected for uniform mesh-refining. \Cref{fig:convergence-energy-p-Laplace}.b displays the suboptimal convergence rates $0.75$ for the energy error $|E(u) - E_\ell(u_\ell)|$ and all polynomial degrees $k = 0, \dots, 4$.
	The adaptive mesh-refining algorithm refines towards the origin as depicted in \Cref{fig:adaptive-triangulation-p-Laplace} and we observed a stronger local refinement for larger polynomial degree $k$.
	Since $W$ satisfies \eqref{ineq:cc-primal}--\eqref{ineq:cc-stress}, the interest is on the displacement error $\|\nabla u - \GrRec_\ell u_\ell\|_{L^4(\Omega)}$ and the stress error $\|\sigma - \nabla W(\GrRec_\ell u_\ell)\|_{L^{4/3}(\Omega)}$. On uniformly refined meshes, $\|\nabla u - \GrRec_\ell u_\ell\|_{L^4(\Omega)}^2$ converges with the suboptimal convergence rate $0.375$ and adaptive computation improves the convergence rate to $0.8$ for $k = 0$ and $2.5$ for $k = 4$ as depicted in \Cref{fig:convergence-displacement-stress-p-Laplace}.a. \Cref{fig:convergence-displacement-stress-p-Laplace}.b displays the convergence rate $1$ for the stress error $\|\sigma - \nabla W(\GrRec_\ell u_\ell)\|_{L^{4/3}(\Omega)}^2$ on uniform triangulations for all $k = 0, \dots, 4$. This is optimal for $k = 0$, but \emph{not} for $k \geq 1$. The adaptive mesh-refining algorithm recovers the optimal convergence rates $k+1$ for $k \geq 1$.
	
	\subsection{The optimal design problem}\label{sec:num-ex:ODP}
	Consider $W$ from \Cref{ex:ODP} for $\mu_1 = 1$, $\mu_2 = 2$, $\xi_1 = \sqrt{2\lambda \mu_1/\mu_2}$, and $\xi_2 = \mu_2\xi_1/\mu_1$ with the fixed parameter $\lambda = 0.0145$ on the L-shaped domain $\Omega \coloneqq (-1,1)^2\setminus([0,1)\times(-1,0])$ from \cite[Figure 1.1]{BartelsC2008}. Let $f \equiv 1$ in $\Omega$ and $u_\mathrm{D} \equiv 0$ on $\Gamma_{\mathrm{D}} = \partial \Omega$ with the reference value $\min E(\Acal) = -0.0745512$.
	\begin{figure}[h!]
		\begin{minipage}[t]{0.49\textwidth}
			\centering
			\includegraphics[scale=0.48]{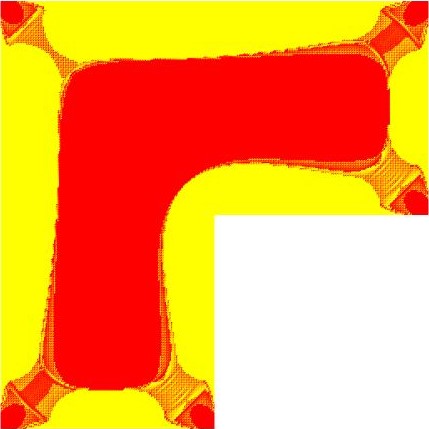}
		\end{minipage}\hfill
		\begin{minipage}[t]{0.49\textwidth}
			\centering
			\includegraphics[scale=0.87]{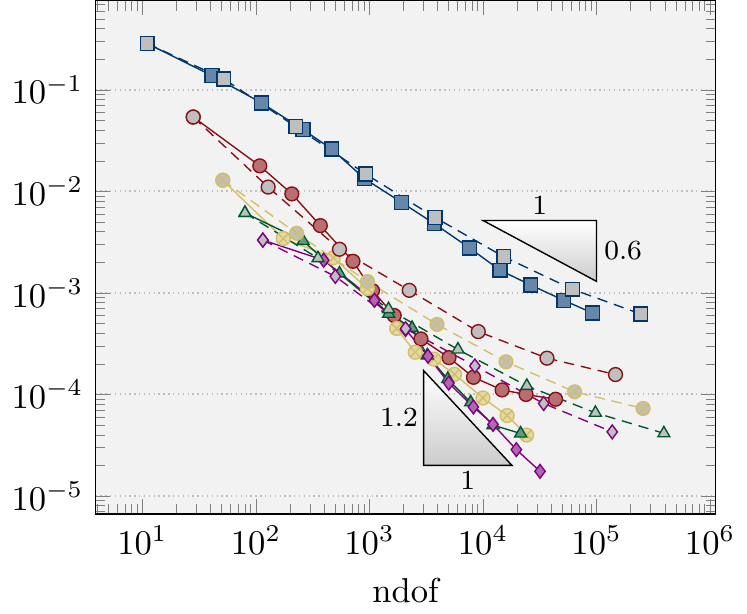}
		\end{minipage}\hfill
		\caption{Material distribution of the L-shaped domain (left) and convergence history plot (right) of $\mathrm{RHS}_\ell$ in \eqref{ineq:RHS} with $k$ from \Cref{fig:legend} on uniform (dashed line) and adaptive (solid line) triangulations for the optimal design problem in \Cref{sec:num-ex:ODP}}
		\label{fig:convergence-ODP}
	\end{figure}
	\begin{figure}[h!]
		\begin{minipage}[t]{0.49\textwidth}
			\centering
			\includegraphics[scale=0.87]{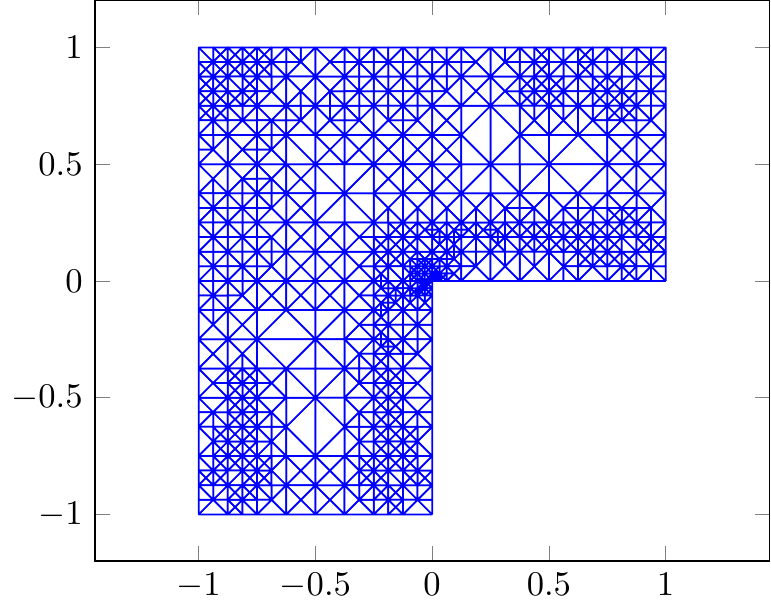}
		\end{minipage}\hfill
		\begin{minipage}[t]{0.49\textwidth}
			\centering
			\includegraphics[scale=0.87]{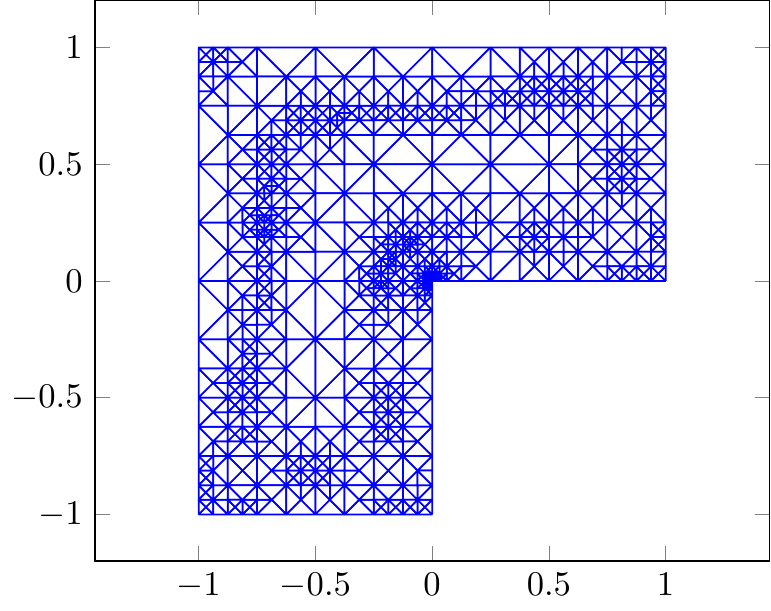}
		\end{minipage}\hfill
		\caption{Adaptive triangulation of the L-shaped domain into 1510 triangles (3721 dofs) for $k = 0$ (left) and 1351 triangles (21450 dofs) for $k = 3$ (right) for the optimal design problem in \Cref{sec:num-ex:ODP}}
		\label{fig:adaptive-triangulations-ODP}
	\end{figure}
	
	The material distribution in \Cref{fig:convergence-ODP}.a consists of two homogenous phases, an interior (red) and a boundary (yellow) layer, and a transition layer, also called microstructure zone with a fine mixture of the two materials \cite{BartelsC2008,CLiu2015,CGuentherRabus2012,CarstensenTran2020}. The approximated volume fractions $\Lambda(|\Pi_{\Tcal_\ell}^0 \GrRec_\ell u_\ell|)$ for a discrete minimizer $u_\ell$ with $\Lambda(\xi) = 0$ if $0 \leq \xi \leq \xi_1$, $\Lambda(\xi) = (\xi - \xi_1)/(\xi_2 - \xi_1)$ if $\xi_1 \leq \xi \leq \xi_2$, and $\Lambda(\xi) = 1$ if $\xi \geq \xi_2$, define the colour map of the fraction plot of \Cref{fig:convergence-ODP}.
	Since $W$ satisfies \eqref{ineq:cc-stress}, \Cref{thm:plain-convergence} implies the convergence of $|E(u) - E_\ell(u_\ell)|$ and $\|\sigma - \nabla W(\GrRec_\ell u_\ell)\|_{L^{2}(\Omega)}$. Since the exact solution is unknown, the numerical experiment computes $\mathrm{RHS}_\ell$ in
	\begin{align}
		\begin{split}
			&\|\sigma - \nabla W(\GrRec_\ell u_\ell)\|_{L^2(\Omega)}^2 + |E(u) - E_\ell(u_\ell)|\\
			&\qquad \lesssim \mathrm{RHS}_\ell \coloneqq E_\ell(u_\ell) - E^*(\sigma_\ell) + \osc(f,\Tcal_\ell) + \|\GrRec_\ell u_\ell - \nabla \Jcal_\ell u_\ell\|_{L^2(\Omega)}^2
			\label{ineq:RHS}
		\end{split}
	\end{align}
	from \cite[Theorem 4.6]{CarstensenTran2020} with the convex conjugate $W^* \in C(\M)$ \cite[Corollary 12.2.2]{Rockafellar1970} and the dual energy
	\begin{align}
		E^*(\sigma_\ell) \coloneqq -\int_\Omega W^*(\sigma_\ell) \d{x}.\label{def:dual-energy}
	\end{align}
	\Cref{fig:convergence-ODP}.b displays the suboptimal convergence rate 0.4 for $\mathrm{RHS}_\ell$ on uniform triangulations. The adaptive algorithm refines towards the reentrant corner and the boundaries of the microstructure zone as displayed in \Cref{fig:adaptive-triangulations-ODP}. This improves the convergence rates up to $1.2$ for $k = 4$. Undisplayed computer experiments show significant improvement for the convergence rates of $\mathrm{RHS}_\ell$ for examples with small microstructure zones in agreement with the related empirical observations in \cite{CGuentherRabus2012}.
	
	\subsection{The relaxed two-well benchmark}\label{sec:num-ex:2well}
	\begin{figure}[ht!]
		\begin{minipage}[t]{0.49\textwidth}
			\centering
			\includegraphics[scale=0.87]{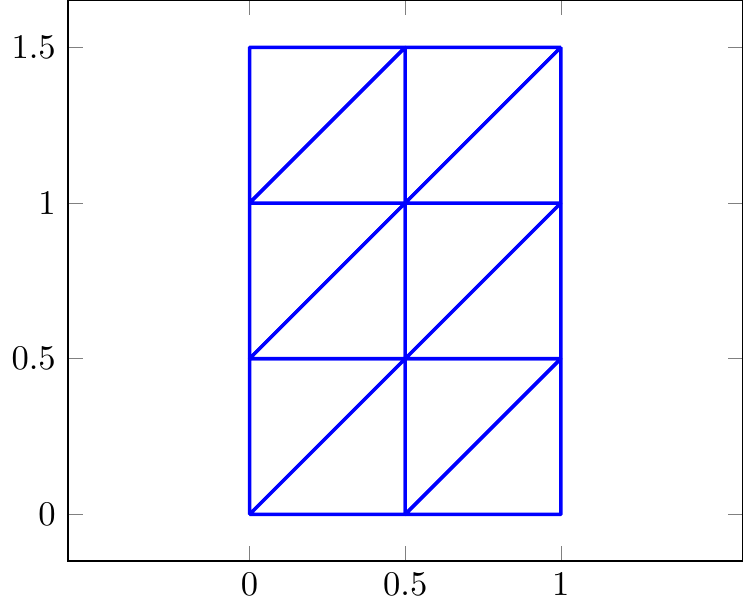}
		\end{minipage}\hfill
		\begin{minipage}[t]{0.49\textwidth}
			\centering
			\includegraphics[scale=0.87]{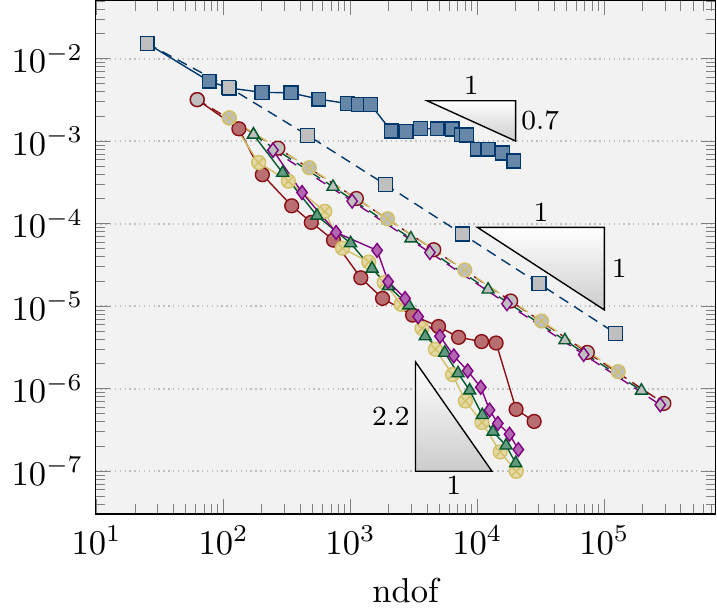}
		\end{minipage}\hfill
		\caption{Initial triangulation (left) of the rectangular domain $\Omega$ and convergence history plot (right) of $|E(u) - E_\ell(u_\ell)|$ with $k$ from \Cref{fig:legend} on uniform (dashed line) and adaptive (solid line) triangulations for the two-well benchmark in \Cref{sec:num-ex:2well}}
		\label{fig:initial-triangulation-convergence-energy-2well}
	\end{figure}
	\noindent Let $\Omega \coloneqq (0,1) \times (0,3/2)$ with pure Dirichlet boundary $\Gamma_{\mathrm{D}} \coloneqq \partial \Omega$.
	The computational benchmark from \cite{CKlose2003} considers the two distinct wells $F_1 = -(3,2)/\sqrt{13} = -F_2$ in the definition of $W$ from \Cref{ex:2well} and introduces an additional quadratic term $\|\zeta - v\|_{L^2(\Omega)}^2$ in the energy
	\begin{align*}
		E(v) \coloneqq \int_\Omega (W(\nabla v) - f v) \d{x} + \|\zeta - v\|_{L^2(\Omega)}^2/2
	\end{align*}
	for all $v \in \Acal \coloneqq u_\mathrm{D} + W^{1,4}_0(\Omega)$ with $f(x,y) \coloneqq - 3\varrho^5/128 - \varrho^3/3$, $\zeta(x,y) \coloneqq \varrho^3/24 + \varrho$,
	\begin{align*}
		u(x,y) \coloneqq u_\mathrm{D}(x,y) \coloneqq \begin{cases}
			f(x,y) &\mbox{if } -1/2 \leq \varrho \leq 0,\\
			\zeta(x,y) &\mbox{if } 0 \leq \varrho \leq 1/2
		\end{cases}
	\end{align*}
	at $(x,y) \in \R^2$ and $\varrho \coloneqq (3(x-1) + 2y)/\sqrt{13}$. Since $E$ is strictly convex in $\Acal$, the minimal energy $\min E(\Acal) = E(u) = 0.1078147674$ is attained at the unique minimizer $u$. The discrete minimizer $u_\ell = (u_{\Tcal_\ell}, u_{\Fcal_\ell})$ of the discrete energy
	\begin{align*}
		E_\ell(v_\ell) \coloneqq \int_\Omega (W(\GrRec_\ell v_\ell) - f v_{\Tcal_\ell}) \d{x} + \|\zeta - v_{\Tcal_\ell}\|_{L^2(\Omega)}^2/2 \text{ among } v_\ell = (v_{\Tcal_\ell}, v_{\Fcal_\ell}) \in \Acal(\Tcal_\ell)
	\end{align*}
	is unique in the volume component $u_{\Tcal_\ell}$ only. The convergence analysis can be extended to the situation at hand with the refinement indicator $\widetilde{\eta}_\ell^{(\varepsilon)}(T) \coloneqq \eta_\ell^{(\varepsilon)}(T) + |T|\|(1 - \Pi_T^k) \zeta\|_{L^2(T)}^2$ and leads to $\lim_{\ell \to \infty} E_\ell(u_\ell) = E(u)$, $\lim_{\ell \to \infty} \nabla W(\GrRec_\ell u_\ell) = \sigma$ (strongly) in $L^{4/3}(\Omega;\R^2)$, and $\lim_{\ell \to \infty} u_{\Tcal_\ell} = u$ (strongly) in $L^4(\Omega)$.
	\begin{figure}[t!]
		\begin{minipage}[t]{0.49\textwidth}
			\centering
			\includegraphics[scale=0.87]{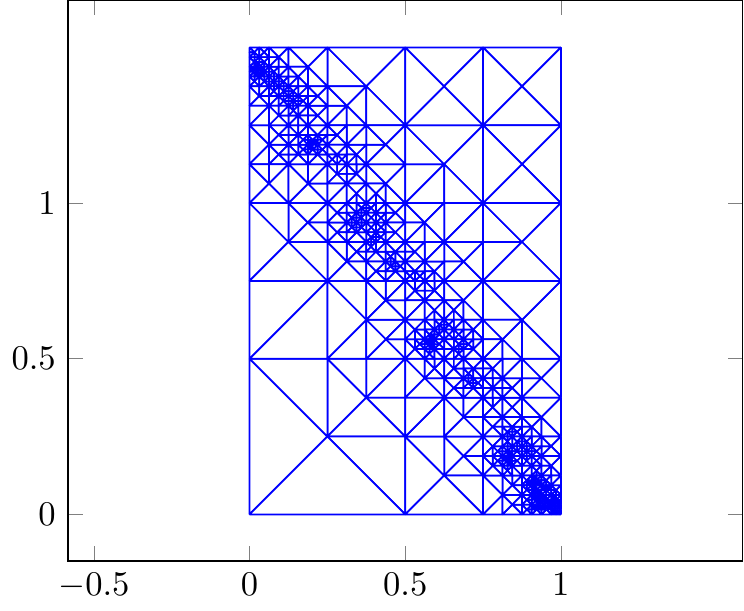}
		\end{minipage}\hfill
		\begin{minipage}[t]{0.49\textwidth}
			\centering
			\includegraphics[scale=0.87]{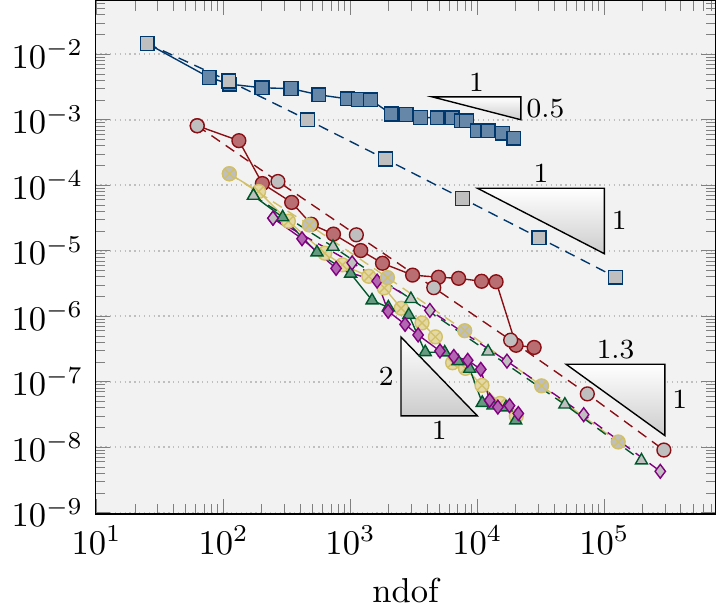}
		\end{minipage}\hfill
		\caption{Adaptive triangulation (left) of the rectangular domain $\Omega$ into 1192 triangles (7104 dofs) for $k = 1$ and convergence history plot (right) of $\|u - u_{\Tcal_\ell}\|_{L^2(\Omega)}^2$ with $k$ from \Cref{fig:legend} on uniform (dashed line) and adaptive (solid line) triangulations for the two-well benchmark in \Cref{sec:num-ex:2well}}
		\label{fig:adaptive-triangulation-convergence-volume-2well}
	\end{figure}
	\begin{figure}[t!]
		\begin{minipage}[t]{0.49\textwidth}
			\centering
			\includegraphics[scale=0.87]{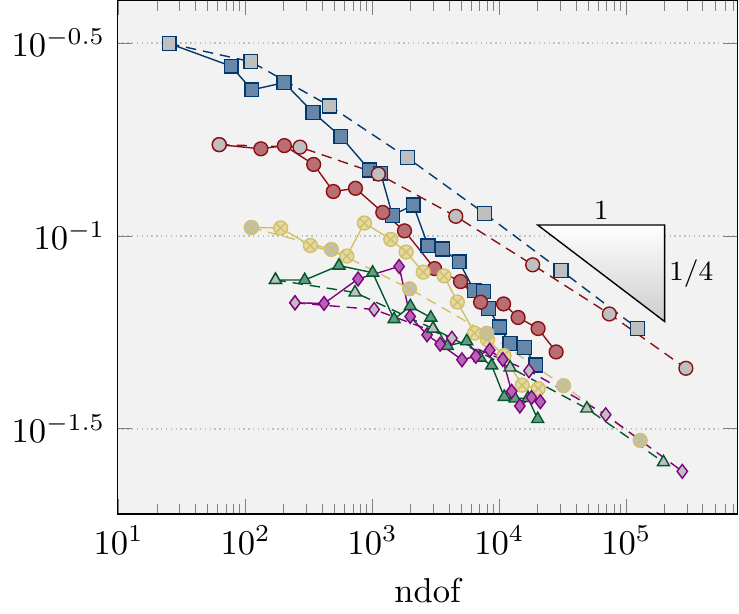}
		\end{minipage}\hfill
		\begin{minipage}[t]{0.49\textwidth}
			\centering
			\includegraphics[scale=0.87]{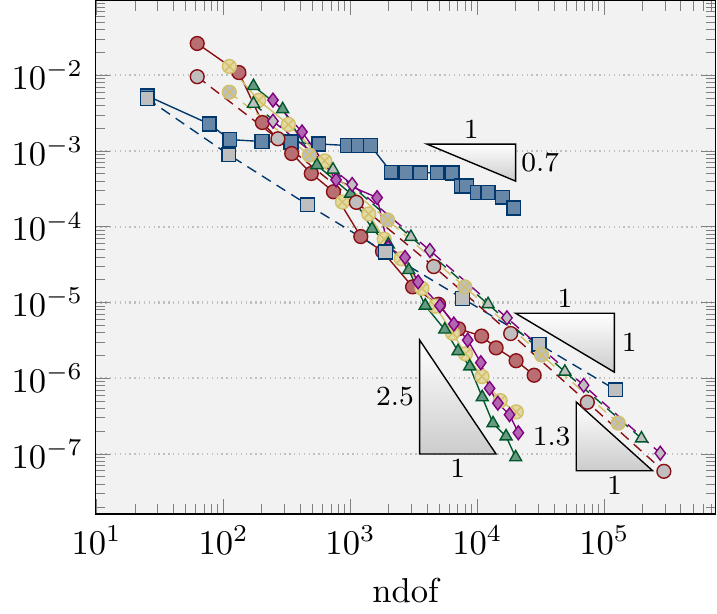}
		\end{minipage}\hfill
		\caption{Convergence history plot of $\|\nabla u - \GrRec_\ell u_\ell\|_{L^4(\Omega)}^2$ (left) and $\|\sigma - \nabla W(\GrRec_\ell u_\ell)\|_{L^{4/3}(\Omega)}^2$ (right) with $k$ from \Cref{fig:legend} on uniform (dashed line) and adaptive (right) triangulations for the two-well benchmark in \Cref{sec:num-ex:2well}}
		\label{fig:convergence-displacement-stress-2well}
	\end{figure}

	The exact solution $u$ is piecewise smooth and the derivative $\nabla u$ jumps across the interface $\Gamma = \mathrm{conv}\{(1,0),(0,3/2)\}$. For an aligned initial triangulation, where $\Gamma$ coincides with the sides of the triangulation, the numerical results from \cite{CarstensenTran2020} display optimal convergence rates $k+1$ for $|E(u) - E_\ell(u_\ell)|$, $\|\sigma - \sigma_\ell\|_{L^{4/3}(\Omega)}^2$ $\|u - u_{\Tcal_\ell}\|_{L^2(\Omega)}^2$, and $\|\nabla u - \GrRec_\ell u_\ell\|_{L^4(\Omega)}^2$ on uniformly refined meshes.
	Since a~priori information on $u$ is not available in general, this numerical benchmark considers the non-aligned initial triangulation $\Tcal_0$ in \Cref{fig:initial-triangulation-convergence-energy-2well}.a, where $\Gamma$ cannot be resolved exactly (even not with adaptively refined triangulations of $\Tcal_0$). In this case, \cite{CJochimsen2003} predicted
	\begin{align*}
		\|(1 - \Pi_{\Tcal_\ell}^0) u\|_{L^4(\Omega)} + \|(1 - \Pi_{\Tcal_\ell}^0) \sigma\|_{L^{4/3}(\Omega)} \lesssim H_\ell,
		\|(1 - \Pi_{\Tcal_\ell}^0) \nabla u\|_{L^4(\Omega)} \lesssim H_\ell^{1/4}
	\end{align*}
	for $H_\ell \coloneqq \|h_\ell\|_{L^\infty(\Omega)}$.
	These expected (optimal) convergence rates on uniform meshes are indeed observed empirically for the lowest-order HHO scheme. \Cref{fig:initial-triangulation-convergence-energy-2well}.b, \Cref{fig:adaptive-triangulation-convergence-volume-2well}.b, and \Cref{fig:convergence-displacement-stress-2well} displays the convergence rate $1$, $1$, $1/4$, and $1$ for $|E(u) - E_\ell(u_\ell)|$, $\|u - u_{\Tcal_\ell}\|_{L^2(\Omega)}^2$, $\|\nabla u - \GrRec_\ell u_\ell\|_{L^4(\Omega)}^2$, and $\|\sigma - \nabla W(\GrRec_\ell u_\ell)\|_{L^{4/3}(\Omega)}^2$, respectively.
	This improves the convergence rate $3/4$ of the stress error from the lowest-order Courant FEM in \cite{CJochimsen2003}.
	The adaptive algorithm generates adaptive meshes with a strong local mesh-refinement near the interface $\Gamma$ and improve the convergence rate of $|E(u) - E_\ell(u_\ell)|$ to $2.2$ in \Cref{fig:initial-triangulation-convergence-energy-2well}.b, of $\|u - u_{\Tcal_\ell}\|_{L^2(\Omega)}^2$ to $2$ in \Cref{fig:adaptive-triangulation-convergence-volume-2well}.b, and of $\|\sigma - \nabla W(\GrRec_\ell u_\ell)\|_{L^{4/3}(\Omega)}^2$ to $2.5$ in \Cref{fig:convergence-displacement-stress-2well}.b for polynomial degrees $k \geq 2$. For $k = 1$, adaptive mesh refinements only leads to marginal improvements. Since optimal convergence rates are obtained for $\|u - u_{\Tcal_\ell}\|_{L^2(\Omega)}$ and $\|\sigma - \nabla W(\GrRec_\ell u_\ell)\|_{L^{4/3}(\Omega)}$ with $k = 0$ on uniform meshes, there is not much gain from adaptive computation.
	
	\subsection{Modified Foss-Hrusa-Mizel benchmark}\label{sec:num-ex:FHM}
	The final example considers a modified Foss-Hrusa-Mizel \cite{FossHrusaMizel2003} benchmark in \cite{OrtnerPraetorius2011}, extended to the domain $\Omega \coloneqq (-1,1) \times (0,1)$ with $\Gamma_1 \coloneqq [-1,0] \times \{0\}$, $\Gamma_2 \coloneqq [0,1] \times \{0\}$, $\Gamma_3 \coloneqq \{x = (x_1,x_2) \in \partial \Omega: x_1 = -1 \text{ or } x_1 = 1 \text{ or } x_2 = 1\}$, and the initial triangulation $\Tcal_0$ of \Cref{fig:initial-triangulation-Lavrentiev-gap-verification}.a. Define the energy density $W(A) \coloneqq (|A|^2 - 2\det A)^4 + |A|^2/2$ for all $A \in \M \coloneqq \R^{2 \times 2}$, the set
	\begin{align*}
		\Acal \coloneqq \{v = (v_1,v_2) \in W^{1,2}(\Omega;\R^2) : v_1 \equiv 0 \text{ on } \Gamma_1, v_2 \equiv 0 \text{ on } \Gamma_2, v = u_\mathrm{D} \text{ on } \Gamma_3\}
	\end{align*}
	of admissible functions in $W^{1,2}(\Omega;\R^2)$ with $u_\mathrm{D} \coloneqq (\cos(\varphi/2), \sin(\varphi/2))$ in polar coordinates, and the vanishing right-hand side $f \equiv 0$. The minimal energy $E(u) = \min E(\Acal) = 0.88137023556$ of
	\begin{align*}
		E(v) \coloneqq \int_\Omega W(\D v) \d{x} \text{ among } v \in \Acal
	\end{align*}
	is attained at $u \coloneqq r^{1/2}(\cos(\varphi/2), \sin(\varphi/2))$ in polar coordinates. The energy density $W \in C^1(\M)$ is convex and satisfies the lower growth $W(A) \geq |A|^2/2$ of order $p = 2$, but \emph{no} upper growth of order $2$.	
	\begin{figure}[h!]
		\begin{minipage}[t]{0.49\textwidth}
			\centering
			\includegraphics[scale=0.87]{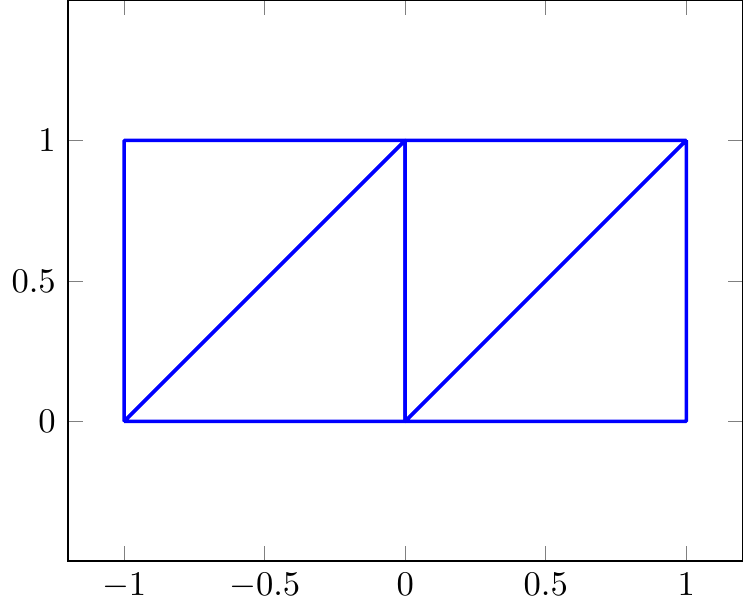}
		\end{minipage}\hfill
		\begin{minipage}[t]{0.49\textwidth}
			\centering
			\includegraphics[scale=0.87]{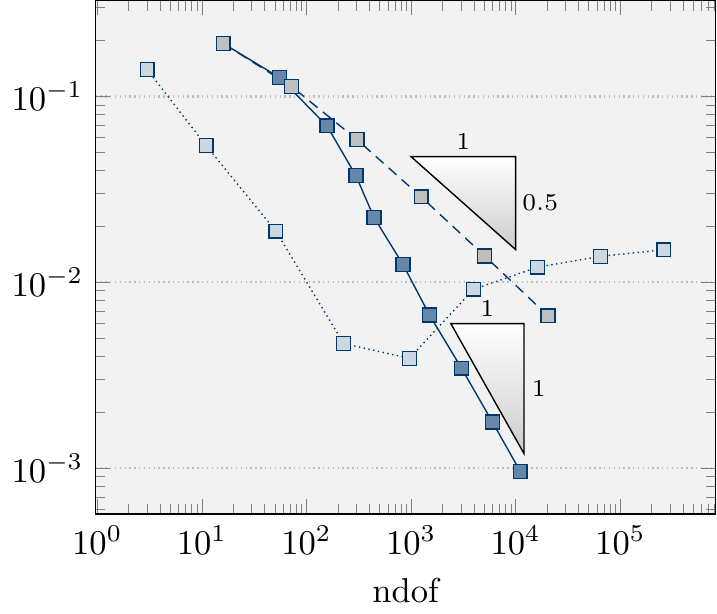}
		\end{minipage}\hfill
		\caption{Initial triangulation $\Tcal_0$ (left) of $\Omega$ and empirical verification of the Lavrentiev gap (right) for the modified Foss-Hrusa-Mizel benchmark in \Cref{sec:num-ex:FHM}: convergence history plot of $|E(u) - E_\ell(u_\ell)|$ for the Courant FEM (dotted line) and the lowest-order HHO method on uniform (dashed line) and adaptive (solid line) triangulations}
		\label{fig:initial-triangulation-Lavrentiev-gap-verification}
	\end{figure}
	\begin{figure}[h!]
		\begin{minipage}[t]{0.49\textwidth}
			\centering
			\includegraphics[scale=0.87]{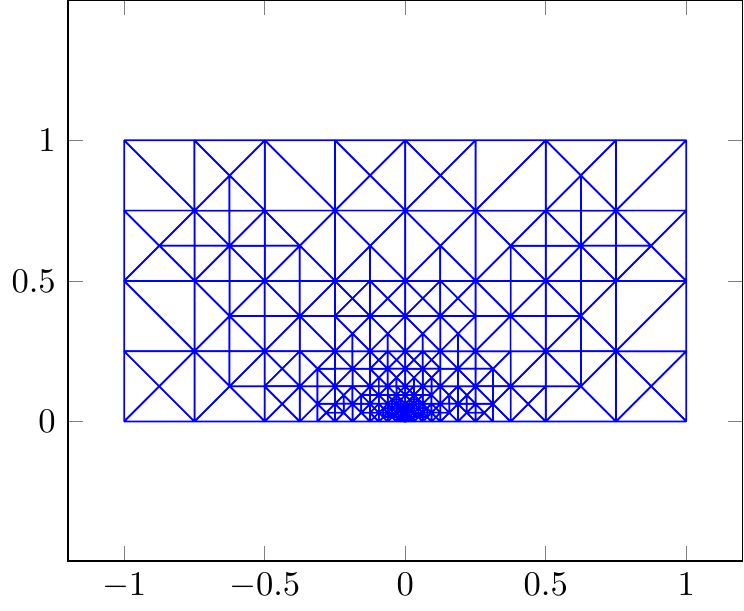}
		\end{minipage}\hfill
		\begin{minipage}[t]{0.49\textwidth}
			\centering
			\includegraphics[scale=0.87]{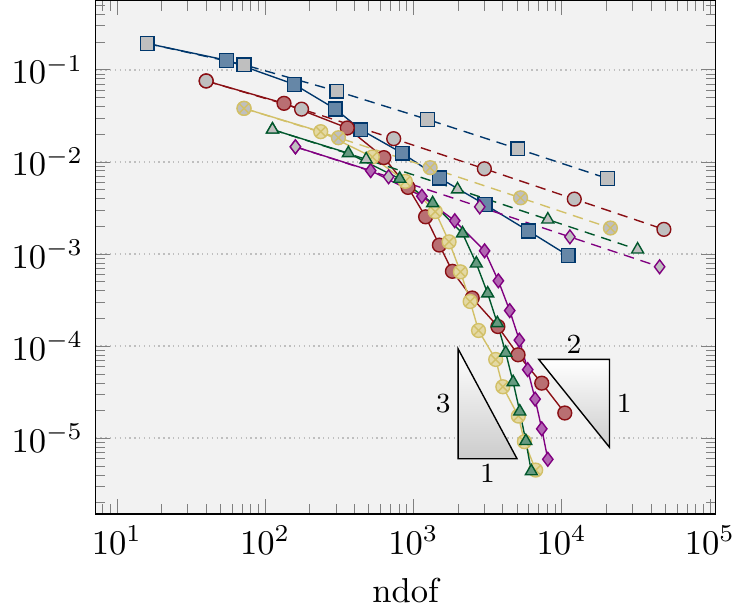}
		\end{minipage}\hfill
		\caption{Adaptive triangulation of $\Omega$ (left) into 614 triangles (7336 dofs) for $k = 1$ and convergence history plot (right) of $|E(u) - E_\ell(u_\ell)|$ with $k$ from \Cref{fig:legend} on uniform (dashed line) and adaptive (solid line) triangulations for the modified Foss-Hrusa-Mizel benchmark in \Cref{sec:num-ex:FHM}}
		\label{fig:convergence-energy-FHM}
	\end{figure}
	The application of the discrete compactness to this model example with free boundary requires the modified refinement indicator
	\begin{align*}
		&\eta_\ell^{(\varepsilon)}(T) \coloneqq |T|^{\varepsilon - 1}\|\Pi_{T}^k((\PotRec_\ell u_\ell)|_T - u_T)\|^2_{L^2(T)} + |T|^{\varepsilon - 1/2}\\
		&\qquad\times\Big(\sum_{F \in \Fcal_\ell(T) \cap \Fcal_\ell(\Gamma_1)} \|(\PotRec_\ell u_\ell)|_F \cdot e_1\|_{L^2(F)}^2 + \sum_{F \in \Fcal_\ell(T) \cap \Fcal_\ell(\Gamma_2)} \|(\PotRec_\ell u_\ell)|_F \cdot e_2\|_{L^2(F)}^2\\
		&\qquad\qquad + \sum_{F \in \Fcal_\ell(T) \cap \Fcal_\ell(\Gamma_3)} \|(\PotRec_\ell u_\ell)|_F - u_\mathrm{D}\|_{L^2(F)}^2 + \sum_{E \in \Fcal_\ell(T) \cap \Fcal_\ell(\Omega)} \|[\PotRec_\ell u_\ell]_F\|_{L^2(F)}^2\\
		&\qquad\qquad + \sum_{F \in \Fcal_\ell(T)} \|\Pi_F((\PotRec_\ell u_\ell)_T - u_F)\|_{L^2(F)}^2\Big)
	\end{align*}
	with the $j$-th canonical unit vector $e_j \in \R^2$.
	Since the presence of the Lavrentiev gap is equivalent to the failure of conforming FEMs \cite[Theorem 2.1]{COrtner2010}, the lowest-order HHO can be utilized to detect the Lavrentiev gap, cf.~\Cref{sec:Lavrentiev-gap}. \Cref{fig:initial-triangulation-Lavrentiev-gap-verification}.b provides empirical evidence that there is a Lavrentiev gap: $|E(u) - E_\ell(u_\ell)|$ converges with the suboptimal convergence rate $0.5$ on uniformly refined meshes, but the Courant FEM seems to approximate a wrong energy. The adaptive mesh-refining algorithm refines towards the origin as depicted in \Cref{fig:convergence-energy-FHM}.a.
	It is outlined in \Cref{sec:Lavrentiev-gap} that a convergence proof of AHHO for minimization problems with the Lavrentiev gap is impossible with the known mathematical methodology for $k \geq 1$.
	It comes as a welcome surprise that optimal convergence rates $k+1$ are obtained for any polynomial degrees $k$ on adaptively refined meshes in \Cref{fig:convergence-energy-FHM}.b.
	
	\subsection{Conclusions}
	The numerical results from \Cref{sec:numerical-examples} confirm the theoretical findings in \Cref{thm:plain-convergence}. In particular, the convergence of the energy $\lim_{\ell \to \infty} \min E_\ell(\Acal(\Tcal_\ell)) = \min E(\Acal)$ is observed in all examples. 
	The introduced adaptive mesh-refining algorithm of \Cref{sec:adaptive_algorithm} provides efficient approximations of singular solutions and even leads to improved empirical convergence rates.
	The choice of the parameter $\varepsilon$ only has marginal influence on the convergence rates and convergence is observed for $\varepsilon = 0$ in undisplayed computer experiments.
	Better convergence rates are obtained for larger polynomial degrees $k$. The computer experiments provide empirical evidence that the HHO method can overcome the Lavrentiev gap for any polynomial degree $k$.
	
	\paragraph{Acknowledgement.} This work has been supported by the Deutsche Forschungsgemeinschaft (DFG) in the Priority Program 1748 Reliable simulation techniques in solid mechanics: Development of non-standard discretization methods, mechanical and mathematical analysis under the project CA 151/22.

	\printbibliography
\end{document}